\newtheorem{theo}{Theorem}[section]
\newtheorem{lem} [theo]{Lemma}
\newtheorem{coro}[theo]{Corollary}
\newtheorem{prop}[theo]{Proposition}
\renewcommand{\thefigure}{\arabic{section}.\arabic{figure}}
\makeatletter \@addtoreset{equation}{section}
\renewcommand{\theequation}{\arabic{section}.\arabic{equation}}
\renewcommand{\thesection}{\arabic{section}}
\renewcommand{\thetheo}{\arabic{section}.\arabic{theo}}
\renewcommand{\arraystretch}{1.3}
\def\qed{\hfill \rule{4pt}{7pt}}
\def\pf{\noindent {\it Proof.} }
\def\S{  \mathfrak{S}}
\def\N{  \mathrm{N}}
\def\Min{  \mathrm{Min}}
\def\Max{  \mathrm{Max}}
\def\GL{   \mathrm{GL}  }
\def\Char{   \mathrm{char}  }
\def\tr{   \mathrm{tr}  }
\def\cc {\mathcal{C}}
\def\FL { \mathcal{FL}}
\def\FLa {\mathcal{FL}_{C^{1}}}
\def\FLb {\mathcal{FL}_{C^{2}}}
\begin{document}
\begin{center}
{\Large\bf Zero-one dual characters of flagged Weyl modules}

\vskip 6mm
{\small   }
Peter L. Guo, Zhuowei Lin, Simon C.Y. Peng

\end{center}

\begin{abstract}
%Let $D$ be a subset of boxes in the $n\times n$ square grid, and  $\chi_{D}(x)$ be the   dual character of the flagged Weyl module associated to $D$. It is well known that both Schubert and key polynomials occur as special cases of  $\chi_{D}(x)$. M{\'e}sz{\'a}ros,   St. Dizier  and   Tanjaya proposed a conjectured criterion of when $\chi_{D}(x)$ is   zero-one   (that is, all non-zero coefficients appearing in $\chi_{D}(x)$ are equal to one). The goal of this paper is to prove  this conjecture. 
We prove a  criterion of when the  dual character $\chi_{D}(x)$  of the flagged Weyl module associated to a diagram  $D$ in the grid $[n]\times [n]$ is zero-one, that is, the coefficients of monomials in $\chi_{D}(x)$ are either  0 or 1. This settles  a conjecture proposed   by M{\'e}sz{\'a}ros--St. Dizier--Tanjaya.
Since Schubert polynomials and key polynomials occur as  special cases  of dual flagged Weyl characters, our approach  provides  a new and unified proof of  known  criteria for zero-one Schubert/key polynomials   due to Fink--M{\'e}sz{\'a}ros--St. Dizier and Hodges--Yong, respectively.

\end{abstract}

\vskip 3mm

\noindent {\bf Keywords:}   flagged Weyl module, dual character, Schubert polynomial, key polynomial,  zero-one polynomial

\vskip 3mm

\noindent {\bf AMS Classifications:} 05E10,  05E14, 05A19, 14N15

\section{Introduction}

The goal of this paper is  to confirm a conjectured  criterion  for zero-one dual characters of flagged Weyl modules   due to
M{\'e}sz{\'a}ros,   St. Dizier  and   Tanjaya \cite[Conjecture 3.9]{meszaros2021principal}. 
A polynomial is called zero-one (or, multiplicity-free) if   its coefficients are equal to either $0$ or $1$. It is well known that dual flagged Weyl characters encompass  Schubert and key polynomials as special cases. So our result  leads to a unified proof  for  previously known criteria respectively   for zero-one Schubert polynomials by Fink, M{\'e}sz{\'a}ros and St. Dizier \cite{fink2021zero} and   zero-one key polynomials by   Hodges and Yong \cite{hodges2023multiplicity}.

Motivations of  investigating  zero-one dual characters of flagged Weyl modules are multifold. It was proved by Fink,  M{\'e}sz{\'a}ros and   St. Dizier \cite{2018Schubert} that the supports of the dual character  of a flagged Weyl module are exactly the same as the integer  points in its Newton polytope. Hence   a zero-one dual flagged Weyl character  is completely  determined by the integer points in its Newton polytope. 

As aforementioned, Schubert and key polynomials appear as special cases of dual characters of flagged Weyl models, see for example  \cite{2018Schubert} and references therein. A criterion for zero-one Schubert polynomials was found  by   Fink, M{\'e}sz{\'a}ros and  St. Dizier \cite{fink2021zero}, whose proof makes use of  Magyar's orthodontia formula for Schubert polynomials \cite{magyar1998schubert, mestrans}. By the work of Knutson and Miller \cite{KM-2},  Schubert polynomials are the multidgree polynomials of matrix Schubert varieties. Particularly, zero-one Schubert polynomials correspond to multiplicity-free matrix Schubert  varieties, and in this case,  one could capture the entire  information of K-polynomials (namely, Grothendieck polynomials)  from that of  zero-one Schubert polynomials \cite{Brion, CCFM, EL, Knutson-1, MSSD-2, PS-2022}.  Moreover, for a permutation  whose associated Schubert polynomial is zero-one,   the CDG generators of the  defining ideal    of its matrix Schubert variety form a diagonal Gr\"obner basis  \cite{Klein, HPW}.

%It should be pointed out that Magyar's orthodontia  is not available for a general diagram \cite[Section 7]{mestrans}, and therefore 
%the method for zero-one Schubert polynomials in \cite{fink2021zero} does not seem available   to  general  zero-one dual flagged Weyl characters. 

A criterion (first announced in \cite[Theorem 4.10]{HY-1}) for zero-one key polynomials was proved by Hodges and Yong \cite{hodges2023multiplicity} by employing   two combinatorial models for  key polynomials:  the quasi-key model by   Assaf and Searles \cite {AS-1} and  the  Kohnert diagram model by Kohnert   \cite{Koh}. The algebraic motivation of studying zero-one key polynomials stems from the classification of Levi-spherical Schubert varieties \cite{HY-1,GHY}.

Another notable  subfamily of dual flagged Weyl characters are   for northewest diagrams. This  subfamily also includes Schubert and key polynomials as special cases.
It was shown by Armon,     Assaf,  Bowling and   Ehrhard \cite{AABE}  that the dual   flagged Weyl character for a northwest diagram coincides with the Kohnert polynomial defined  in \cite{AS-22}. As remarked in \cite{hodges2023multiplicity}, the techniques they developed for zero-one key polynomials do not seem to extend to    zero-one dual flagged Weyl characters for northwest diagrams.

We proceed by describing   the criterion for zero-one dual characters of flagged Weyl modules, as  stated   in  \cite[Conjecture 3.9]{meszaros2021principal}.
Let  $[n]=\{1,2,\ldots, n\}$.  A subset $D$ of boxes in the   grid $[n]\times [n]$   is called a diagram. The   flagged Weyl module $\mathcal{M}_{D}$ associated to a diagram $D$  is a module of the group $B$ of invertible upper-triangular matrices over $\mathbb{C}$ \cite{kraskiewicz1987foncteurs, kraskiewicz2004schubert, magyar1998schubert}, see Section \ref{FWM} for the precise definition.  The dual character of $\mathcal{M}_{D}$, denote $\chi_{D}(x)$, is a polynomial in $x_1,\ldots, x_n$, which specifies to the Schubert polynomial $\mathfrak{S}_w(x)$ when $D$ is the Rothe diagram of a permutation $w$, and to the key polynomial $\kappa_\alpha(x)$ when $D$ is the skyline diagram of a composition $\alpha$. The flagged Weyl modules have inspired considerable recent research interests, see for example \cite{AABE, FG-1, fan2022upper, 2018Schubert, fink2021zero, upper, MT-23, meszaros2021principal, mestrans}.

Following  the terminology in \cite{meszaros2021principal}, a {\it multiplicitous} configuration refers to one of the six instances of configurations listed in  Figure  \ref{avoid}.
Here, a ``$\times$'' indicates the absence of a box, a blank square  indicates the presence of a  box, and a ``$*$'' indicates no restriction
 on the presence or absence of a box.
  \begin{figure}[h]
        \centering
        %\begin{equation*}\label{2.1}
		\begin{tikzpicture}
			\centering
                %第一张图
                %\filldraw [lightgray] (-5.5,0) rectangle (-5,0.5);
                %\filldraw [lightgray] (-5,-0.5) rectangle (-4.5,0);
                \draw[black] (-5.5,1) -- (-5,0.5);
                \draw[black] (-5.5,0.5) -- (-5,1);
                \draw[black] (-5,1) -- (-4.5,0.5);
                \draw[black] (-5,0.5) -- (-4.5,1);
                \draw[black] (-5,0.5) -- (-4.5,0);
                \draw[black] (-5,0) -- (-4.5,0.5);
                \draw[black] (-5,0.5) -- (-4.5,1);
                \draw[black,step=0.5] (-5.5,1) grid (-4.5,-1);
                \node at (-5.25,-0.25) {$\boldsymbol{*}$};
                \node at (-5.25,-0.75) {$\boldsymbol{*}$};
                \node at (-4.75,-0.75) {$\boldsymbol{*}$};
                %第二张图 
                %\filldraw [lightgray] (-3.5,-0.5) rectangle (-3,0);
                %\filldraw [lightgray] (-3,-0.5) rectangle (-2.5,0);
                %\filldraw [lightgray] (-3,-0.5) rectangle (-3.5,-1);
                \draw[black] (-3,1) -- (-3.5,0.5);
                \draw[black] (-3,0.5) -- (-3.5,1);
                \draw[black] (-3.5,0.5) -- (-3,0);
                \draw[black] (-3,0.5) -- (-3.5,0);
                \draw[black] (-2.5,0.5) -- (-3,0);
                \draw[black] (-3,0.5) -- (-2.5,0);
                \draw[black,step=0.5] (-3.5,1) grid (-2.5,-1);
                \node at (-4.75+2,-0.75) {$\boldsymbol{*}$};
                %第三张图
                %\filldraw [lightgray] (-1.5,-0.5) rectangle (-1,0);
                %\filldraw [lightgray] (-0.5,0) rectangle (-1,-0.5);
                %\filldraw [lightgray] (-1.5,-0.5) rectangle (-1,-1);
                \draw[black] (-1.5,1) -- (-0.5,0);
                \draw[black] (-1.5,0) -- (-0.5,1);
                \draw[black] (-1,0) -- (-0.5,0.5);
                \draw[black] (-1,1) -- (-0.5,0.5);
                \draw[black] (-1.5,0.5) -- (-1,1);
                \draw[black] (-1.5,0.5) -- (-1,0);
                \draw[black,step=0.5] (-1.5,1) grid (-0.5,-1);
                \node at (-0.75,-0.75) {$\boldsymbol{*}$};
                %第四张图
                \draw[black] (0.5,1) -- (1.5,0);
                \draw[black] (1,1) -- (0.5,0.5);
                \draw[black] (1,0) -- (1.5,0.5);
                %\filldraw [lightgray] (1,1) rectangle (1.5,0.5);
                %\filldraw [lightgray] (1,0) rectangle (0.5,0.5);
                %\filldraw [lightgray] (1,0) rectangle (1.5,-0.5);
                \%filldraw [lightgray] (1,0) rectangle (0.5,-0.5);
                \draw[black,step=0.5] (0.5-0.0001,1) grid (1.5,-1);
                \node at (-0.75+1.5,-0.75) {$\boldsymbol{*}$};
                \node at (-0.75+2,-0.75) {$\boldsymbol{*}$};
                %第五张图
                \draw[black] (2.5,1) -- (3.5,0);
                \draw[black] (3,1) -- (2.5,0.5);
                \draw[black] (3.5,0.5) -- (3,0);
                \draw[black] (3.5,0) -- (2.5,-1);
                \draw[black] (3.5,-0.5) -- (3,0);
                \draw[black] (3,-1) -- (2.5,-0.5);
                %\filldraw [lightgray] (2.5,0) rectangle (3,-0.5);
                %\filldraw [lightgray] (2.5,0) rectangle (3,0.5);
                %\filldraw [lightgray] (3.5,0.5) rectangle (3,1);
                %\filldraw [lightgray] (3.5,-0.5) rectangle (3,-1);
                %\filldraw [lightgray] (2.5,0) rectangle (3,-0.5);
                %\filldraw [lightgray] (2.5,0) rectangle (3,0.5);
                \draw[black,step=0.5] (2.5-0.0001,1) grid (3.5,-1);

                %第六张图
                \draw[black] (5.5,1) -- (4.5,0);
                \draw[black] (5.5,0.5) -- (5,1);
                \draw[black] (4.5,0.5) -- (5,0);
                \draw[black] (5,0) -- (5.5,-0.5);
                \draw[black] (5.5,0) -- (4.5,-1);
                \draw[black] (5,-1) -- (4.5,-0.5);
                %\filldraw [lightgray] (5.5,-0.5) rectangle (5,-1);
                %\filldraw [lightgray] (4.5,-0.5) rectangle (5,0);
                %\filldraw [lightgray] (5.5,0.5) rectangle (5,0);
                %\filldraw [lightgray] (4.5,0.5) rectangle (5,1);
                \draw[black,step=0.5] (4.5-0.0001,-1) grid (5.5,1);
                \node at  (-5,-1.5) {$(A)$};
                \node at  (-3,-1.5) {$(B)$};
                \node at  (-1,-1.5) {$(C)$};
                \node at  (1,-1.5) {$(D)$};
                \node at  (3,-1.5) {$(E)$};
                \node at  (5,-1.5) {$(F)$};
        	%\draw[black,step=0.5] (0.5,1.5) rectangle (1,2);
			%\draw[black] (-1,1.5) -- (-0.5,2);
			%\draw[black] (0.5,1.5) -- (1,2);
			%\draw[black] (-1,2) -- (-0.5,1.5);
			%\draw[black] (0.5,2) -- (1,1.5);
			%\filldraw [lightgray] (-1,0) rectangle (-0.5,0.5);
			%\draw[black] (-1,0) rectangle (-0.5,0.5);
			%\filldraw [lightgray] (0.5,0) rectangle (1,0.5);
			%\draw[black] (0.5,0) rectangle (1,0.5);
		\end{tikzpicture}
	%\end{equation*}
        \caption{Multiplicitous configurations.}
        \label{avoid}
    \end{figure}
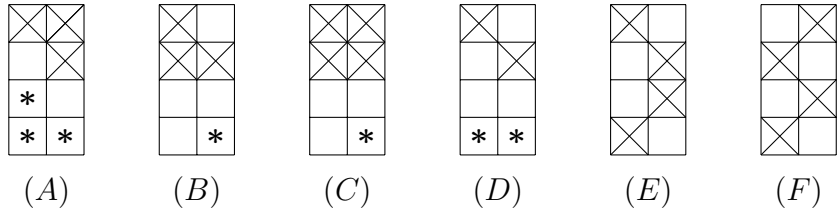

A diagram $D$ is called {\it multiplicitous} if it contains one of multipliticous configurations as a subdiagram, up to possibly swapping the order of the columns. This means that there exist row indices $i_1<i_2<i_3<i_4$ and column indices $j_1<j_2$ such that the subdiagram of $D$, which is restricted  to rows $\{i_1, i_2, i_3, i_4\}$ and columns  $\{j_1, j_2\}$, is    either  a multiplicitous  configuration  or a configuration obtained from a multiplicitous  configuration  by swapping   its two columns. We call a diagram  $D$ {\it multiplicity-free} if it is not    multiplicitous. So a multiplicity-free diagram avoids a total of 12 configurations. 

In light of  an observation in \cite[Theorem 5.8]{fink2021zero},  it is readily  verified  that if the dual character $\chi_{D}(x)$ is  zero-one, then $D$ must be  multiplicity-free \cite[Proposition 3.11]{meszaros2021principal}. This paper aims  to prove the reverse direction  which appears as \cite[Conjecture 3.9]{meszaros2021principal}. 

\begin{theo}[{\cite[Conjecture 3.9]{meszaros2021principal}}]\label{MMain}
    If  $D$ is a multiplicity-free  diagram, then the dual character   $\chi_{D}(x)$ is  zero-one.
      
\end{theo}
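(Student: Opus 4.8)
We argue the contrapositive: if $\chi_D(x)$ is not zero-one, then $D$ contains a multiplicitous configuration. Two remarks set the stage. First, since $\mathcal{M}_D\subseteq\bigotimes_j\Lambda^{|C_j|}\mathbb{C}^n$ is generated over $B$ by $\bigotimes_j e_{C_j}$ and permuting the columns of $D$ merely permutes the tensor factors, $\chi_D$ depends only on the multiset of columns of $D$; this is exactly why the forbidden patterns are taken ``up to swapping columns'', and it lets us reorder columns at will. Second, by \cite[Proposition~3.11]{meszaros2021principal} the reverse implication is already known, so only the forward one is at issue. The workhorse will be a monomial-positive combinatorial model $\chi_D(x)=\sum_T x^{\mathrm{wt}(T)}$, where $T$ runs over a suitable family of flagged fillings of $D$ (a combinatorial labelling of the lattice points counted in the Newton-polytope description of \cite{2018Schubert}); the defining constraints on $T$ are column-strictness, the flag bounds imposed by row indices, and the adjacency/compatibility conditions that cut $\mathcal{M}_D$ out of the tensor product.

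Suppose then that $x^a$ appears in $\chi_D$ with coefficient at least $2$, and fix two distinct flagged fillings $T\neq T'$ of $D$ with $\mathrm{wt}(T)=\mathrm{wt}(T')=a$. Among all such pairs, choose one minimizing --- in a deliberately chosen order --- first the rightmost column $j_2$ on which $T$ and $T'$ disagree, then the number of disagreeing columns, then the number of disagreeing boxes. Since $T$ and $T'$ have equal weight, a discrepancy in column $j_2$ must be balanced by discrepancies strictly to its left. The crux is a \emph{locality} statement: minimality forces $T$ and $T'$ to disagree in only two columns $j_1<j_2$, and within those columns to differ by a single ``exchange'' of entries confined to a short band of rows. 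Granting this, one reads off a forbidden pattern: the boxes carrying the exchanged entries are necessarily present, while column-strictness together with the flag and compatibility constraints force specific neighbouring boxes in columns $j_1,j_2$ to be absent; a finite check shows that the resulting present/absent pattern on the (at most four) relevant rows is always one of $(A)$--$(F)$ of Figure~\ref{avoid} or its column reversal. Hence $D$ is multiplicitous.

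I expect the locality statement to be the main obstacle. The issue is that ``equal weight'' is a global constraint, so a priori a discrepancy could propagate through arbitrarily many columns; taming it should require an explicit weight-preserving straightening involution --- in the spirit of Bender--Knuth symmetries or jeu de taquin --- that shortens any chain of disagreements spanning three or more columns, contradicting minimality. A second, more computational difficulty is exhaustiveness of the final case analysis: one must verify that every way two strictly-increasing, flag-bounded, compatibility-respecting pairs of columns can carry the same monomial yet differ corresponds to one of the twelve listed configurations, with no others appearing. Finally, running the whole argument with $D$ equal to the Rothe diagram of a permutation, respectively the skyline diagram of a composition, should recover the zero-one criteria of Fink--M\'esz\'aros--St.~Dizier for Schubert polynomials and of Hodges--Yong for key polynomials, since the twelve-configuration condition specializes to the corresponding pattern-avoidance conditions --- yielding the promised unified treatment.
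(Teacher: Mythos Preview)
Your plan has a foundational gap that undermines the whole strategy: you assume the existence of a monomial-positive combinatorial formula $\chi_D(x)=\sum_T x^{\mathrm{wt}(T)}$ for an \emph{arbitrary} diagram $D$, but no such formula is available. The expression in Proposition~\ref{ABC-1} is signed and, as the paper notes, generally not cancellation-free; the Newton-polytope result of \cite{2018Schubert} pins down the support of $\chi_D$ but says nothing about the multiplicities. What is actually known is that $[x^a]\chi_D(x)$ equals the dimension of $\mathrm{Span}\{\det(Y_D^C):x^C=x^a\}$, so ``coefficient $\ge 2$'' does \emph{not} translate into ``two distinct fillings with the same weight''; there can be many distinct $C\le D$ with $x^C=x^a$ while the coefficient is still $1$, because the corresponding determinants coincide. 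Your argument therefore never gets off the ground: the pair $T\ne T'$ you want to select need not exist, and even if you replace ``fillings'' by ``diagrams $C\le D$'', the minimality and locality reasoning no longer witnesses a coefficient $\ge 2$. Beyond this, the two steps you yourself flag as obstacles --- the locality reduction to two columns and the exhaustive six-pattern case analysis --- are genuinely hard and are not carried out.

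The paper proceeds in the opposite direction and sidesteps the positivity issue entirely. Assuming $D$ is multiplicity-free, it proves directly that every eigenspace is one-dimensional by showing that $\det(Y_D^C)=\det(Y_D^{C'})$ whenever $x^C=x^{C'}$. This is done via an explicit sign- and weight-preserving bijection $\mathcal{F}_D(C)\to\mathcal{F}_D(C')$ (Theorem~\ref{main-bijection}), built by an iterative column-exchange operation $\Phi$ that processes the normalized diagram region by region. The structural input is a sequence of lemmas (Lemmas~\ref{longest}--\ref{only one}) describing exactly how the columns of a multiplicity-free diagram can sit relative to one another; these lemmas are where the twelve forbidden configurations are actually used. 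In short, rather than extracting a bad pattern from a hypothetical pair of positive witnesses, the paper uses the absence of bad patterns constructively to rearrange fillings.
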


Combining Theorem  \ref{MMain} with the above mentioned \cite[Proposition 3.11]{meszaros2021principal} gives a full criterion for zero-one dual Weyl characters.

\begin{coro}
The dual character   $\chi_{D}(x)$ is  zero-one if and only if   $D$ is    a multiplicity-free diagram.     
\end{coro}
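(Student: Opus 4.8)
\section*{Proof proposal for the Corollary}

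The Corollary is purely a packaging statement: it asserts the equivalence ``$\chi_D(x)$ is zero-one $\iff$ $D$ is multiplicity-free,'' and both implications are already available once Theorem \ref{MMain} is in hand. The plan is therefore to simply assemble the two directions, citing what has been established.

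For the forward direction (``$\chi_D(x)$ zero-one $\Rightarrow$ $D$ multiplicity-free''), I would invoke \cite[Proposition 3.11]{meszaros2021principal}, which was recalled in the paragraph preceding Theorem \ref{MMain}: it rests on the observation of \cite[Theorem 5.8]{fink2021zero} and shows by contraposition that whenever $D$ contains one of the twelve forbidden configurations (a multiplicitous configuration or a column-swap of one), some monomial of $\chi_D(x)$ acquires a coefficient at least $2$; hence a zero-one character forces $D$ to avoid all twelve, i.e.\ to be multiplicity-free. For the reverse direction (``$D$ multiplicity-free $\Rightarrow$ $\chi_D(x)$ zero-one''), this is exactly the content of Theorem \ref{MMain}. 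Combining the two gives the biconditional.

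There is essentially no obstacle here: both halves are quoted rather than reproved, so the only thing to check is that the hypotheses line up — namely that ``multiplicity-free'' means the same thing in \cite[Proposition 3.11]{meszaros2021principal} and in Theorem \ref{MMain} (it does, since both use the definition fixed earlier in this section in terms of avoiding the configurations of Figure \ref{avoid}, up to swapping columns), and that both statements concern the same object $\chi_D(x)$ for an arbitrary diagram $D \subseteq [n]\times[n]$. Thus the proof is a two-line deduction: one sentence citing \cite[Proposition 3.11]{meszaros2021principal} for necessity and one sentence citing Theorem \ref{MMain} for sufficiency, followed by $\qed$.
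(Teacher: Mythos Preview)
Your proposal is correct and matches the paper's approach exactly: the paper states this corollary immediately after Theorem~\ref{MMain} with the sentence ``Combining Theorem~\ref{MMain} with the above mentioned \cite[Proposition 3.11]{meszaros2021principal} gives a full criterion for zero-one dual Weyl characters,'' and offers no further argument. Your two-line deduction citing \cite[Proposition 3.11]{meszaros2021principal} for necessity and Theorem~\ref{MMain} for sufficiency is precisely what is intended.
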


The above corollary on the one hand specializes to the criterion for zero-one Schubert polynomials \cite[Theorem 1]{fink2021zero} when $D$ is the Rothe diagram of a permutation, and on the other hand to the criterion  for zero-one key polynomials \cite[Theorem 1.1]{hodges2023multiplicity}  when $D$ is the skyline diagram of a composition. 

%The    proof  of Theorem \ref{MMain} can be outlined  as follows. As will be seen  Section \ref{FWM}, the coefficient of a monomial  $x^a$ appearing in $\chi_{D}(x)$ is the dimension of the eigenspace corresponding to  $x^a$. So $\chi_{D}(x)$ is zero-one if and only if each of the eigenspaces has dimension exactly equal to one. We show that when 
%$D$ is multiplicity-free, the generators of each given  eigenspace are the same, thereby completing the proof of Theorem \ref{MMain}.   

This paper is structured  as follows. In Section \ref{FWM}, we review the construction of  the flagged Weyl module $\mathcal{M}_{D}$, as well as some facts about its dual character $\chi_{D}(x)$. Particularly, it will be seen that the coefficient of a monomial  $x^a$ in $\chi_{D}(x)$ is equal to the dimension of the eigenspace in $\mathcal{M}_{D}$ corresponding to  $x^a$. Section \ref{sect3-rr} is devoted to several lemmas concerning the properties  of multiplicity-free diagrams.  In Section \ref{MMain-pp}, we  finish the proof of Theorem \ref{MMain}   which will be achieved by showing that when a diagram $D$ is multiplicity-free, each  eigenspace in $\mathcal{M}_{D}$    has dimension exactly equal to one.  The arguments are combinatorial.

\subsection*{Acknowledgements}
This work was  supported by the National Natural Science Foundation of China (Nos. 11971250, 12371329), and the Fundamental Research Funds for the Central Universities (No. 63243072).

\section{Dual characters of flagged Weyl modules}\label{FWM}

In this section, we   give an overview of some basic information about  flagged Weyl modules. We mainly follow the notation in \cite{2018Schubert, fink2021zero}.

For the square grid $[n]\times [n]$, let us  use $(i, j)$ to denote the box in row $i$ and column $j$ in the matrix coordinate. Recall that a diagram $D$ is a subset of boxes in $[n]\times [n]$. Write $D=(D_{1}, D_{2},\ldots, D_{n})$, where, for $1\leq j\leq n$, $D_j$ stands for the $j$-th column of $D$. 
We shall often represent $D_j$ by  a subset of $[n]$, that is,   $i\in D_j$
 if and only if the box $(i,j)$ belongs to $D$. 
 For example, the diagram in Figure \ref{fig:diaex} can be written  as $D=(\{2,3,4\},\emptyset,\{1,2\},\{3\})$.
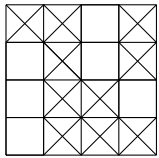
\begin{figure}[h]
    \centering
%    \begin{align*}
	\begin{tikzpicture}
		\centering
		\draw[black,step=0.5] (-1,-1) grid (1,1);
            \draw[black] (-1,1) -- (1,-1);
            \draw[black] (-1,0.5) -- (-0.5,1);
            \draw[black] (-1+0.5,0.5) -- (-0.5+0.5,1);
            \draw[black] (-1+1.5,0.5) -- (-0.5+1.5,1);
            \draw[black] (-0.5+0.5,0.5) -- (-1+0.5,1);
            \draw[black] (-0.5+1.5,0.5) -- (-1+1.5,1);
            \draw[black] (-1+0.5,0) -- (-0.5+0.5,1-0.5);
            \draw[black] (-1+1.5,0) -- (-0.5+1.5,1-0.5);
            \draw[black] (-0.5+0.5,0) -- (-1+0.5,1-0.5);
            \draw[black] (-0.5+1.5,0) -- (-1+1.5,1-0.5);
            \draw[black] (-1+0.5,0-0.5) -- (-0.5+0.5,0);
            \draw[black] (-0.5,0) -- (0 ,-0.5);
            \draw[black] (-0.5+0.5,-0.5) -- (0+0.5 ,0);
            \draw[black] (-1+0.5,0-1) -- (-0.5+0.5,-0.5);
            \draw[black] (-0.5,0-0.5) -- (0 ,-0.5-0.5);
            \draw[black] (-0.5+0.5,-0.5-0.5) -- (0+0.5 ,0-0.5);
            \draw[black] (0,-0.5) -- (0.5, -1);
            \draw[black] (0.5,-1) -- (1,-0.5);
		\draw[black,step=0.5] (-1,-1) grid (-0.5,0.5);
		\draw[black,step=0.5] (0,0) grid (0.5,1);
		\draw[black,step=0.5] (0.5,-0.5) grid (1,0);
	\end{tikzpicture}
%\end{align*}
    \caption{A diagram in $[4]\times [4]$.}
    \label{fig:diaex}
\end{figure}

For two $k$-element  subsets $R=\{r_1< \cdots<r_k\}$ and $S= \{s_1< \cdots<s_k\} $ of $[n]$, write  $R\leq S$  if  $r_i\leq s_i$ for $1\leq i\leq k$. This defines a partial order, called the {\it Gale order}, on all $k$-element subsets of $[n]$. For a  given  subset $S$ of $[n]$, the collection of subsets $R\leq S$   constitutes the  basis of a matroid, called { Schubert matroid},    on the ground set $[n]$ \cite{2018Schubert}. 
It is   worth mentioning that the Gale  order plays an important role in the study of the positroid decomposition  of (positive) Grassmannians \cite{KLS, Post}. 
For two diagrams $C=(C_{1}, \ldots, C_{n})$ and $D=(D_{1}, \ldots,D_{n})$,  write  $C\leq D$ if $C_{j}\leq D_{j}$ for each $1\leq j\leq n$. 

Let $\mathrm{GL}(n,\mathbb{C})$ be the general linear group of $n\times n$ invertible matrices over $\mathbb{C}$, and $B$  the Borel subgroup of $\mathrm{GL}(n,\mathbb{C})$ consisting of all  upper-triangular matrices. Let $Y$ be the upper-triangular matrix of variables $y_{ij}$ with $1\leq i\leq j\leq n$:
\begin{equation*}
	Y=
	\begin{bmatrix}
		y_{11}&y_{12}&\ldots&y_{1n}\\
		0&y_{22}&\ldots&y_{2n}\\
		\vdots&\vdots&\ddots&\vdots\\
		0&0&\ldots&y_{nn}
	\end{bmatrix}.
\end{equation*}
Denote by  $\mathbb{C} [Y]$   the linear space of polynomials over $\mathbb{C}$ in the variables  $\{y_{ij}\}_{i\leq j}$. Define the (right) action of $B$   on $\mathbb{C} [Y]$ by   $f(Y)\cdot b=f(b^{-1}\cdot Y)$, where $b\in B$ and  $f\in \mathbb{C} [Y]$. For two subsets $R$ and $S$ of $[n]$ with the same cardinality, we use $Y^{R}_{S}$ to represent  the submatrix of $Y$ obtained by restricting to rows indexed by $R$ and columns indexed by $S$. It is not hard to check that $\det\left(Y^{R}_{S}\right)\neq 0$ if and only if $R\leq S$.
Given two diagrams $C=(C_{1}, \ldots, C_{n})$ and  $D=(D_{1}, \ldots, D_{n})$ with $C\leq D$, denote 
\[
\mathrm{det}\left(Y_{D}^C\right)=\prod_{j=1}^{n}\mathrm{det}\left(Y_{D_{j}}^{C_{j}}\right).
\]
The flagged Weyl module $\mathcal{M}_{D}$  for  $D$ is  a subspace of $\mathbb{C} [Y]$ defined by
\begin{align*}	\mathcal{M}_{D}=\mathrm{Span}_{\mathbb{C}}\left\{\mathrm{det}\left(Y_D^C\right)\colon C\leq D\right\},
\end{align*}
which  is a $B$-module with the action inherited from the action of $B$ on $\mathbb{C} [Y]$.
 
Let $X$ be the diagonal matrix with diagonal entries $x_{1},\ldots,x_{n}$.
The character of $\mathcal{M}_{D}$ is defined by
\[\mathrm{char}(\mathcal{M}_{D})(x_{1},\ldots, x_{n})=\mathrm{tr}(X\colon \mathcal{M}_{D}\rightarrow \mathcal{M}_{D}).\] 
It can be directly checked  that  $\mathrm{det}\left(Y_{D}^C\right)$  is an
eigenvector of $X$ with eigenvalue
\[\prod_{j=1}^n\prod_{i\in C_j}x_i^{-1}.\]
The   dual character is  defined as 
\begin{align*}
\chi_{D}(x):=\mathrm{char}(\mathcal{M}_{D})(x_{1}^{-1},\ldots,x_{n}^{-1}).
\end{align*}
Hence   the coefficient of a monomial $x^a$
in $\chi_{D}(x)$  is equal to the dimension of the corresponding  eigenspace:
\begin{align}\label{UUUP}
[x^a]\,\chi_{D}(x)=\dim \mathrm{Span}_{\mathbb{C}}\left\{\mathrm{det}\left(Y_{D}^C\right)\colon  C\leq D, \, x^C=x^a\right\},
\end{align}
where
\[
x^{C}=\prod_{j=1}^{n}\prod_{i\in C_{j}} x_{i}.
\]
Particularly, the set of monomials appearing in  $\chi_{D}(x)$ is   $ \left\{x^C\colon C\le D\right\}$.

\begin{prop}\label{PPPo}
The dual character $\chi_{D}(x)$ is   zero-one  if and only if each eigensapce on the right-hand side of \eqref{UUUP} has dimension one.  
\end{prop}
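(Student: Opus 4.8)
The plan is to deduce the equivalence immediately from the dimension formula \eqref{UUUP} together with the columnwise non-vanishing of the minors $\det(Y_S^R)$. By \eqref{UUUP}, for every exponent vector $a$ the coefficient $[x^a]\,\chi_D(x)$ equals $\dim \mathrm{Span}_{\mathbb{C}}\{\det(Y_D^C):C\le D,\ x^C=x^a\}$, so, by definition, $\chi_D(x)$ is zero-one precisely when this dimension is at most $1$ for every $a$.

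First I would observe that this dimension is $0$ when no $C\le D$ satisfies $x^C=x^a$, and is at least $1$ otherwise. Indeed, if $C\le D$ then $C_j\le D_j$ for every $j$, so by the stated fact that $\det(Y_S^R)\ne 0$ if and only if $R\le S$ we get $\det(Y_{D_j}^{C_j})\ne 0$ for each $j$, whence $\det(Y_D^C)=\prod_{j=1}^n\det(Y_{D_j}^{C_j})$ is a nonzero vector of $\mathbb{C}[Y]$ lying in the span. Consequently the eigenspace indexed by $x^a$ is nonzero exactly when $x^a\in\{x^C:C\le D\}$, i.e. exactly when $x^a$ occurs in $\chi_D(x)$.

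Combining these two points, $\chi_D(x)$ is zero-one if and only if for every $a$ with $\{C\le D:x^C=x^a\}\ne\emptyset$ the corresponding dimension is exactly $1$, which is to say if and only if every (nonzero) eigenspace appearing on the right-hand side of \eqref{UUUP} has dimension one; this is the asserted equivalence. I do not anticipate a genuine obstacle here, since the statement is essentially a reformulation of \eqref{UUUP}; the only point requiring a short argument is that a monomial $x^a$ contributes to $\chi_D(x)$ exactly when its eigenspace is nonzero, and this follows at once from $\det(Y_D^C)\ne 0$ whenever $C\le D$.
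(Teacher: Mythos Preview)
Your proposal is correct and matches the paper's treatment: the paper states Proposition~\ref{PPPo} without proof, regarding it as an immediate reformulation of \eqref{UUUP}, and the details you supply (in particular, that $\det(Y_D^C)\ne 0$ whenever $C\le D$, so the relevant eigenspace is nonzero exactly when the monomial occurs) are precisely the straightforward points one would check.
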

    
We end this section with  a combinatorial expression   for the generator $\mathrm{det}\left(Y_{D}^C\right)$.
A  {\it flagged filling} of a diagram  $D=(D_1,\ldots, D_n)$   is a filling of the boxes of $D$ with positive integers   such that  
\begin{itemize}
	\item[(1)] each box receives exactly one integer, and the entries in each column are distinct;
		
  \item[(2)] the entry in the box $(i,j)\in D$ cannot exceed $i$.
\end{itemize}

Let $F=(F_1,\ldots, F_n)$ be a flagged filling of $D$, where, for $1\leq j\leq n$, $F_j$ denotes the $j$-th column of $F$.  Define the inversion number $\mathrm{inv}(F)$ of $F$ as follows.  Let $w=w_1\cdots w_m$ be the word obtained by reading the entries of $F_j$ from top to bottom. Denote by   $\mathrm{inv}(F_j)$   the inversion number of $w$,
namely, 
\[
\mathrm{inv}(F_j)=\#\{(r, s)\colon 1\leq r<s\leq m, \ w_r>w_s\}.
\]
In the case when  $D_j$ is empty, we adopt the convention that $\mathrm{inv}(F_j)=0$. 
Define
\[
\mathrm{inv}(F)=\mathrm{inv}(F_1)+\cdots +\mathrm{inv}(F_n).
\]
For the  flagged filling   depicted  in Figure \ref{fig:enter-label-UU22}, 
\begin{figure}[ht]
    \centering
%    \begin{align*}
	$F$~=~\begin{tikzpicture}[baseline={([yshift=-.5ex]current bounding box.center)}]
		\centering
		\draw[black,step=0.5] (-1,-1) grid (1,1);
            \draw[black] (-1,1) -- (-0.5,0.5);
            \draw[black] (-1,0.5) -- (-0.5,1);
            \draw[black] (-1+.5,1) -- (-0.5+.5,0.5);
            \draw[black] (-1+.5,0.5) -- (-0.5+.5,1);
            \draw[black] (-1+1.5,1) -- (-0.5+1.5,0.5);
            \draw[black] (-1+1.5,0.5) -- (-0.5+1.5,1);
            \draw[black] (-1+1.5,1-.5) -- (-0.5+1.5,0.5-.5);
            \draw[black] (-1+1.5,0.5-.5) -- (-0.5+1.5,1-.5);
            \draw[black] (-1+1,1-.5) -- (-0.5+1,0.5-.5);
            \draw[black] (-1+1,0.5-.5) -- (-0.5+1,1-.5);
            \draw[black] (-1+1,1-1) -- (-0.5+1,0.5-1);
            \draw[black] (-1+1,0.5-1) -- (-0.5+1,1-1);
            \draw[black] (-1+1,1-1.5) -- (-0.5+1,0.5-1.5);
            \draw[black] (-1+1,0.5-1.5) -- (-0.5+1,1-1.5);
		\draw[black,step=0.5] (-1,-1) grid (-0.5,0.5);
        \draw[black,step=0.5] (-0.5,0) rectangle (0,0.5);
        \draw[black,step=0.5] (-0.5,0) rectangle (0,-0.5);
        \draw[black,step=0.5] (-0.5,-1) rectangle (0,-0.5);
		\draw[black,step=0.5] (0,1) grid (0.5,0.5);
		\draw[black,step=0.5] (0.5,-1) grid (1,0);
		\node at (-0.75,-0.75) {1};
		\node at (-0.75,-0.25) {3};
		\node at (-0.75,0.25) {2};
		\node at (0.25,0.75) {1};
		\node at (0.75,-0.25) {3};
         \node at (0.75,-0.75) {2};
        \node at (-0.25,-0.75) {3};
		\node at (-0.25,-0.25) {1};
		\node at (-0.25,0.25) {2};
	\end{tikzpicture}
%\end{align*}
    \caption{A flagged filling.}
    \label{fig:enter-label-UU22}
\end{figure}
The column reading words are $231, 213, 1$ and $32$ from left to right, and so we have $\mathrm{inv}(F)=2+1+0+1=4$.
Assign a weight to  $F$  in the following way: 	
\begin{align*}	y^{F}=\prod_{(i, j)\in D}y_{c_{ij}i},
\end{align*}
 where $c_{ij}$ is the entry of $F$ filled  in the box $(i, j)$. For example, the flagged  filling in   Figure \ref{fig:enter-label-UU22} gives
 \[
y^{F}=y_{11}\,y_{22}^2\,y_{13}\,y_{33}^2\,y_{14}\,y_{24}\,y_{34}.
 \]

For   $C\leq D$,  
let $\mathcal{F}_{D}(C)$ denote the set  of  flagged fillings    $F=(F_1,\ldots, F_n)$ of $D$ such that for $1\leq j\leq n$,  the set of entries in  $F_j$ is exactly $C_j$.
The following proposition appears as \cite[Lemma 2.2]{upper}.
 
 \begin{prop}\label{ABC-1}
For $C\leq D$,  we have 
\begin{align}\label{OOPP}	\mathrm{det}\left(Y_{D}^C\right)=\sum_{F\in \mathcal{F}_{D}(C)}\mathrm{sgn}(F)\cdot y^{F},
	\end{align}
	where $\mathrm{sgn}(F)=(-1)^{\mathrm{inv}(F)}$.
\end{prop}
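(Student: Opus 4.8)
The plan is to reduce the identity to a single column and then read it off directly from the Leibniz expansion of a determinant. By the definition recalled above, $\mathrm{det}(Y_{D}^C)=\prod_{j=1}^{n}\mathrm{det}(Y_{D_{j}}^{C_{j}})$. On the combinatorial side, a flagged filling $F$ of $D$ belonging to $\mathcal{F}_{D}(C)$ amounts to an independent choice, for each $j$, of a filling $F_{j}$ of the single column $D_{j}$ whose entry set is exactly $C_{j}$ and whose box $(i,j)$ carries an entry not exceeding $i$; moreover $\mathrm{inv}(F)=\sum_{j}\mathrm{inv}(F_{j})$ and $y^{F}=\prod_{j}y^{F_{j}}$. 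Hence it suffices to prove the one-column statement
\[
\mathrm{det}(Y_{D_{j}}^{C_{j}})=\sum_{F_{j}}(-1)^{\mathrm{inv}(F_{j})}\,y^{F_{j}},
\]
the sum being over all fillings $F_{j}$ of $D_{j}$ with entry set $C_{j}$ obeying the flag bound; multiplying these $n$ identities together and expanding the product then recovers \eqref{OOPP}. (If some column admits no such filling, the corresponding factor vanishes on both sides; but in any case $C_{j}\le D_{j}$ already forces the identity filling to be admissible, so the sums are nonempty.)

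To prove the one-column identity I would expand by the Leibniz formula. Write $C_{j}=\{c_{1}<\cdots<c_{k}\}$ and $D_{j}=\{d_{1}<\cdots<d_{k}\}$, so that $Y_{D_{j}}^{C_{j}}$ is the $k\times k$ matrix with $(a,b)$-entry $y_{c_{a}d_{b}}$ and
\[
\mathrm{det}(Y_{D_{j}}^{C_{j}})=\sum_{\pi\in\mathfrak{S}_{k}}\mathrm{sgn}(\pi)\prod_{b=1}^{k}y_{c_{\pi(b)}d_{b}}.
\]
To a permutation $\pi$ associate the filling $F_{j}$ that places the entry $c_{\pi(b)}$ in the box $(d_{b},j)$ for $1\le b\le k$. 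This is a bijection from $\mathfrak{S}_{k}$ onto the set of fillings of $D_{j}$ with entry set $C_{j}$, and under it the flag condition ``the entry of box $(i,j)$ is at most $i$'' becomes precisely $c_{\pi(b)}\le d_{b}$ for every $b$ --- which, because $Y$ is upper triangular, is exactly the condition that the Leibniz monomial $\prod_{b}y_{c_{\pi(b)}d_{b}}$ is nonzero. Thus the surviving terms of the Leibniz sum are indexed by the flagged fillings in question.

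It then remains to match signs and weights across the bijection. Reading the column $F_{j}$ from top to bottom means reading in the order $d_{1}<\cdots<d_{k}$, giving the word $c_{\pi(1)}c_{\pi(2)}\cdots c_{\pi(k)}$; since $c_{1}<\cdots<c_{k}$ is increasing, a pair $r<s$ is an inversion of this word if and only if $\pi(r)>\pi(s)$, so $\mathrm{inv}(F_{j})=\mathrm{inv}(\pi)$ and $(-1)^{\mathrm{inv}(F_{j})}=\mathrm{sgn}(\pi)$. Likewise $y^{F_{j}}=\prod_{(i,j)\in D_{j}}y_{c_{ij}i}=\prod_{b=1}^{k}y_{c_{\pi(b)}d_{b}}$, which is exactly the monomial attached to $\pi$. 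Substituting these two identifications into the Leibniz expansion yields the one-column identity, and multiplying over $j$ completes the proof.

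I do not expect a genuine obstacle here: the argument is essentially an unwinding of the Leibniz formula, and the only point that requires care is keeping the three ingredients --- the flag inequality $c_{\pi(b)}\le d_{b}$, the top-to-bottom orientation of the column reading word, and the upper-triangularity of $Y$ --- consistently aligned with one another.
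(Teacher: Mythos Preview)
Your proof is correct. The paper does not actually prove this proposition but merely cites it from \cite{upper}; your column-by-column Leibniz expansion is the standard argument and is exactly what one would expect the cited proof to be.
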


As explained in  \cite[Remark 2.3]{upper}, the formula in \eqref{OOPP}	may not be cancellation-free, that is, there may exist distinct fillings in $\mathcal{F}_{D}(C)$ that contribute the same weight, but have opposite signs.

\section{Multiplicity-free diagrams}\label{sect3-rr}

In this section, we investigate  the distribution of boxes in a multiplicity-free diagram. This will be a step stone in the proof that each eigenspace for a multiplicity-free diagram has dimension exactly equal to one.

Clearly,  the dual character $\chi_{D}(x)$ is independent of the order of columns of $D$. We say that two diagrams 
are {\it equivalent} if one can be obtained from  the other by reordering the   columns. 
We shall designate  a particular representative  in the class of  diagrams equivalent to $D$. Such a representative  is called the {\it normalization} of $D$. 

\subsection{Normalization of diagrams}

Let $D=(D_1,\ldots, D_n)$ be a diagram. The {  normalization}, denoted $\mathrm{Norm}(D)$, of $D$ is the unique diagram equivalent to $D$ by rearranging the columns in the following way. 
For $1\leq j\leq n$, let $\overline{D}_j=([n]\setminus D_j)\cup \{\infty\}$. 
For  $ {D}_{j_1}\neq  {D}_{j_2}$, suppose that 
\[
\overline{D}_{j_1}=\{a_1<\cdots<a_s<a_{s+1}=\infty\},\ \ \ \ \ 
\overline{D}_{j_2}=\{b_1<\cdots<b_t<b_{t+1}=\infty\}.
\]
Define $ {D}_{j_1}\prec {D}_{j_2}$ if  $\overline{D}_{j_1}<_{\mathrm{lex}}\overline{D}_{j_2}$ 
in the lexicographic order, that is, there exists some index $k$ such that 
$a_i=b_i$ for $1\leq i<k$ and $a_k<b_k$. 
The usage of $\infty$ in $\overline{D}_{j}$ is to ensure that any two distinct  columns of $D$  are comparable.
%Equivalently, $ {D}_{j_1}< {D}_{j_2}$ if ${D}_{j_1}$ is larger than ${D}_{j_2}$ in the reverse lexicographic order. For our purpose, it will be more convenient to use the lexicographic order on $\overline{D}_j$.
Now $\mathrm{Norm}(D)$ is defined by rearranging the columns of  $D$ increasingly from left to right with respect to $\prec$. 
See Figure \ref{normalization}
for an illustration of  the normalization of a diagram.
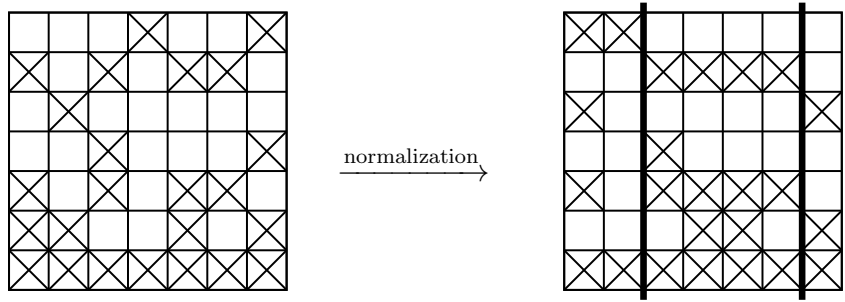
\begin{figure}[h]
    \centering

\tikzset{every picture/.style={line width=0.75pt}} %set default line width to 0.75pt        

\begin{tikzpicture}[x=0.75pt,y=0.75pt,yscale=-1,xscale=1]
%uncomment if require: \path (0,300); %set diagram left start at 0, and has height of 300

%Shape: Grid [id:dp42609033499240145] 
\draw  [draw opacity=0] (0,0) -- (140,0) -- (140,140) -- (0,140) -- cycle ; \draw   (20,0) -- (20,140)(40,0) -- (40,140)(60,0) -- (60,140)(80,0) -- (80,140)(100,0) -- (100,140)(120,0) -- (120,140) ; \draw   (0,20) -- (140,20)(0,40) -- (140,40)(0,60) -- (140,60)(0,80) -- (140,80)(0,100) -- (140,100)(0,120) -- (140,120) ; \draw   (0,0) -- (140,0) -- (140,140) -- (0,140) -- cycle ;
%Shape: Grid [id:dp9100379859201124] 
\draw  [draw opacity=0] (280,0) -- (420,0) -- (420,140) -- (280,140) -- cycle ; \draw   (300,0) -- (300,140)(320,0) -- (320,140)(340,0) -- (340,140)(360,0) -- (360,140)(380,0) -- (380,140)(400,0) -- (400,140) ; \draw   (280,20) -- (420,20)(280,40) -- (420,40)(280,60) -- (420,60)(280,80) -- (420,80)(280,100) -- (420,100)(280,120) -- (420,120) ; \draw   (280,0) -- (420,0) -- (420,140) -- (280,140) -- cycle ;
%Straight Lines [id:da9200847128649479] 
\draw    (0,20) -- (20,40) ;
%Straight Lines [id:da15194972487763936] 
\draw    (0,40) -- (20,20) ;

%Straight Lines [id:da3347683227174798] 
\draw    (0,80) -- (20,100) ;
%Straight Lines [id:da56996915862298] 
\draw    (0,100) -- (20,80) ;

%Straight Lines [id:da212582102491627] 
\draw    (0,100) -- (20,120) ;
%Straight Lines [id:da6069142366943103] 
\draw    (0,120) -- (20,100) ;

%Straight Lines [id:da6077931034492212] 
\draw    (20,40) -- (40,60) ;
%Straight Lines [id:da0059643230694703675] 
\draw    (20,60) -- (40,40) ;

%Straight Lines [id:da5864510298396945] 
\draw    (20,100) -- (40,120) ;
%Straight Lines [id:da7414704493757225] 
\draw    (20,120) -- (40,100) ;

%Straight Lines [id:da7894298705866856] 
\draw    (40,20) -- (60,40) ;
%Straight Lines [id:da7118238064124975] 
\draw    (40,40) -- (60,20) ;

%Straight Lines [id:da8452117246543667] 
\draw    (40,80) -- (60,100) ;
%Straight Lines [id:da9437973433350395] 
\draw    (40,100) -- (60,80) ;

%Straight Lines [id:da6660543786186564] 
\draw    (60,0) -- (80,20) ;
%Straight Lines [id:da7139296614905637] 
\draw    (60,20) -- (80,0) ;

%Straight Lines [id:da5096932659705187] 
\draw    (80,20) -- (100,40) ;
%Straight Lines [id:da8864289006515658] 
\draw    (80,40) -- (100,20) ;

%Straight Lines [id:da9024772466479967] 
\draw    (80,80) -- (100,100) ;
%Straight Lines [id:da5079813553389869] 
\draw    (80,100) -- (100,80) ;

%Straight Lines [id:da09510809616679561] 
\draw    (80,100) -- (100,120) ;
%Straight Lines [id:da5091417408512475] 
\draw    (80,120) -- (100,100) ;

%Straight Lines [id:da7952652253972445] 
\draw    (100,20) -- (120,40) ;
%Straight Lines [id:da4244958976394517] 
\draw    (100,40) -- (120,20) ;

%Straight Lines [id:da733986377272956] 
\draw    (40,60) -- (60,80) ;
%Straight Lines [id:da8598469982731951] 
\draw    (40,80) -- (60,60) ;

%Straight Lines [id:da20989949363363647] 
\draw    (100,80) -- (120,100) ;
%Straight Lines [id:da03224882370062754] 
\draw    (100,100) -- (120,80) ;

%Straight Lines [id:da7836938547977732] 
\draw    (0,120) -- (20,140) ;
%Straight Lines [id:da9465371049177618] 
\draw    (0,140) -- (20,120) ;

%Straight Lines [id:da6968174331977295] 
\draw    (20,120) -- (40,140) ;
%Straight Lines [id:da47173664066427357] 
\draw    (20,140) -- (40,120) ;

%Straight Lines [id:da6132059912769579] 
\draw    (60,120) -- (80,140) ;
%Straight Lines [id:da19164978285569378] 
\draw    (60,140) -- (80,120) ;

%Straight Lines [id:da7600257041711969] 
\draw    (40,120) -- (60,140) ;
%Straight Lines [id:da7924201824206383] 
\draw    (40,140) -- (60,120) ;

%Straight Lines [id:da5984020570358448] 
\draw    (80,120) -- (100,140) ;
%Straight Lines [id:da7131682877223513] 
\draw    (80,140) -- (100,120) ;

%Straight Lines [id:da3197085188341837] 
\draw    (100,120) -- (120,140) ;
%Straight Lines [id:da0821364912084821] 
\draw    (100,140) -- (120,120) ;

%Straight Lines [id:da5351988814122399] 
\draw    (120,120) -- (140,140) ;
%Straight Lines [id:da08023913195686583] 
\draw    (120,140) -- (140,120) ;

%Straight Lines [id:da5070235104862457] 
\draw    (280,120) -- (300,140) ;
%Straight Lines [id:da9425229570236724] 
\draw    (280,140) -- (300,120) ;

%Straight Lines [id:da35174717188924554] 
\draw    (300,120) -- (320,140) ;
%Straight Lines [id:da47202404321283864] 
\draw    (300,140) -- (320,120) ;

%Straight Lines [id:da7046172901557586] 
\draw    (340,120) -- (360,140) ;
%Straight Lines [id:da9189052206321164] 
\draw    (340,140) -- (360,120) ;

%Straight Lines [id:da06047312904921642] 
\draw    (320,120) -- (340,140) ;
%Straight Lines [id:da036829321452887864] 
\draw    (320,140) -- (340,120) ;

%Straight Lines [id:da43311023089773726] 
\draw    (360,120) -- (380,140) ;
%Straight Lines [id:da3720492892633618] 
\draw    (360,140) -- (380,120) ;

%Straight Lines [id:da8935532793918395] 
\draw    (380,120) -- (400,140) ;
%Straight Lines [id:da6708087644520275] 
\draw    (380,140) -- (400,120) ;

%Straight Lines [id:da18546971676818425] 
\draw    (400,120) -- (420,140) ;
%Straight Lines [id:da8512861494141797] 
\draw    (400,140) -- (420,120) ;

%Straight Lines [id:da48368067660095715] 
\draw    (120,0) -- (140,20) ;
%Straight Lines [id:da3541479339830824] 
\draw    (120,20) -- (140,0) ;

%Straight Lines [id:da18816311164193] 
\draw    (120,60) -- (140,80) ;
%Straight Lines [id:da3325093513676072] 
\draw    (120,80) -- (140,60) ;

%Straight Lines [id:da01609804932473624] 
\draw    (120,100) -- (140,120) ;
%Straight Lines [id:da16828993322808605] 
\draw    (120,120) -- (140,100) ;

%Straight Lines [id:da6966109725960818] 
\draw    (280,0) -- (300,20) ;
%Straight Lines [id:da38365279587047807] 
\draw    (280,20) -- (300,0) ;

%Straight Lines [id:da06413472100441386] 
\draw    (280,40) -- (300,60) ;
%Straight Lines [id:da4583636670144542] 
\draw    (280,60) -- (300,40) ;

%Straight Lines [id:da8782846178595922] 
\draw    (280,80) -- (300,100) ;
%Straight Lines [id:da5668060460186379] 
\draw    (280,100) -- (300,80) ;

%Straight Lines [id:da2788591894251704] 
\draw    (300,0) -- (320,20) ;
%Straight Lines [id:da830986145604365] 
\draw    (300,20) -- (320,0) ;

%Straight Lines [id:da8736153617205946] 
\draw    (320,20) -- (340,40) ;
%Straight Lines [id:da37578350882561895] 
\draw    (320,40) -- (340,20) ;

%Straight Lines [id:da4644867277458322] 
\draw    (320,80) -- (340,100) ;
%Straight Lines [id:da8971312721644309] 
\draw    (320,100) -- (340,80) ;

%Straight Lines [id:da4784383397384513] 
\draw    (320,60) -- (340,80) ;
%Straight Lines [id:da05484142264869529] 
\draw    (320,80) -- (340,60) ;

%Straight Lines [id:da9413350807418424] 
\draw    (340,20) -- (360,40) ;
%Straight Lines [id:da5101119059588572] 
\draw    (340,40) -- (360,20) ;

%Straight Lines [id:da6557494769340304] 
\draw    (340,80) -- (360,100) ;
%Straight Lines [id:da23327636335877822] 
\draw    (340,100) -- (360,80) ;

%Straight Lines [id:da7641210926498943] 
\draw    (340,100) -- (360,120) ;
%Straight Lines [id:da619032253631056] 
\draw    (340,120) -- (360,100) ;

%Straight Lines [id:da08285704829029594] 
\draw    (360,20) -- (380,40) ;
%Straight Lines [id:da07301776207055721] 
\draw    (360,40) -- (380,20) ;

%Straight Lines [id:da17996138783316873] 
\draw    (360,80) -- (380,100) ;
%Straight Lines [id:da414646737837288] 
\draw    (360,100) -- (380,80) ;

%Straight Lines [id:da917989197213678] 
\draw    (360,100) -- (380,120) ;
%Straight Lines [id:da5922757963270677] 
\draw    (360,120) -- (380,100) ;

%Straight Lines [id:da6523148354579291] 
\draw    (380,20) -- (400,40) ;
%Straight Lines [id:da1537509033932174] 
\draw    (380,40) -- (400,20) ;

%Straight Lines [id:da7232722916762315] 
\draw    (380,80) -- (400,100) ;
%Straight Lines [id:da4585858477886251] 
\draw    (380,100) -- (400,80) ;

%Straight Lines [id:da36689853612845535] 
\draw    (400,40) -- (420,60) ;
%Straight Lines [id:da05010730421741272] 
\draw    (400,60) -- (420,40) ;

%Straight Lines [id:da6284815681585896] 
\draw    (400,100) -- (420,120) ;
%Straight Lines [id:da6017381047870514] 
\draw    (400,120) -- (420,100) ;

\draw[line width=2.5pt] (320,-5) --(320,145);
\draw[line width=2.5pt] (400,-5) --(400,145);
% Text Node
\draw (164,65.4) node [anchor=north west][inner sep=0.75pt]    {$\xrightarrow{\mathrm{normalization}}$};

\end{tikzpicture}
\caption{An illustration of  normalization.}
\label{normalization}
\end{figure}

A diagram $D$ is called {\it normalized} if $D=\mathrm{Norm}(D)$. Given  a normalized diagram $D$, we partition the columns of   $D$ into distinct regions, according to the   positions of the  first crossing  in each column. Here, the crossings in each column are counted from top to bottom. Precisely, columns  of $D$ belong to the same region if the first crossings in these columns lie in the same row. For example,   the right normalized diagram in Figure \ref{normalization}   is partitioned  into three  regions, as divided  by lines in boldface.  

\subsection{Columns in the same region}\label{subss}

In this subsection, we analyze the distribution of boxes of a diagram $D=(D_1,\ldots, D_n)$ in the same region. We could always assume that each column in $[n]\times [n]$ has at least two ``$\times$'' since otherwise one may embed $D$ into a larger  grid $[m]\times [m]$ with $m>n$. With this in mind, we distinguish the columns of $D$ into three types, according to the number of boxes below the second crossing. 
For $1\leq j\leq n$, we say that 
\begin{itemize}
    \item $D_j$ is of  $\mathrm{Type\ I} $ if there is no box of $D_j$ below the second crossing;
    
    \item $D_j$ is of $\mathrm{Type\ II} $ if there is exactly one box of $D_j$ below the second crossing;

\tikzset{every picture/.style={line width=0.75pt}} %set default line width to 0.75pt        
   
   \item $D_j$ is of $\mathrm{Type\ III} $ if there are at least two boxes  of $D_j$ below the second crossing.
\end{itemize}
Define   the {\it signature} of  $D_j$   as  the number of boxes of $D_j$ lying between the first   and the second crossings. For example, the third   column in the right  diagram in Figure \ref{normalization} is a  Type II column with signature equal to 1. When $D$ is a normalized diagram, the signatures of   columns in the same region are weakly increasing from left to right.

\begin{lem} \label{longest}
Let $D=(D_1,\ldots, D_n)$ be a normalized multiplicity-free diagram. Consider the columns of $D$ in the same region. Suppose that a column $D_j$ does not reach the maximum signature   among columns in the   region. Then $D_j$    must  be of  $\mathrm{Type\ I} $.
\end{lem}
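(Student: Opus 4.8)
The plan is to argue by contradiction: suppose $D_j$ does not attain the maximum signature in its region, yet $D_j$ is of Type II or Type III. I would first set up coordinates for the relevant rows. Let $p$ be the row of the first crossing in every column of the region (this is the defining property of a region), let $q_j$ be the row of the second crossing of $D_j$, and let $D_{j'}$ be a column in the same region whose signature is maximal; in particular $\mathrm{sig}(D_{j'}) > \mathrm{sig}(D_j) \ge 0$, so $D_{j'}$ has strictly more boxes strictly between row $p$ and its second crossing than $D_j$ does. Since signatures are weakly increasing left to right among columns of a region (as recalled just before the lemma), I may take $j' > j$, i.e.\ column $j'$ lies to the right of column $j$; but for the purpose of finding a multiplicitous configuration this placement is immaterial, since multiplicitousness is checked up to swapping the two columns.

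Next I would locate four rows $i_1 < i_2 < i_3 < i_4$ and the two columns $\{j, j'\}$ realizing one of the twelve forbidden patterns. The row $p$ of the common first crossing gives a row in which both columns have a ``$\times$'' with only boxes above; the rows immediately above $p$ are full of boxes in both columns (that is what ``first crossing at $p$'' means). If $D_j$ is Type II or III, there is a box of $D_j$ at or below its second crossing $q_j$, i.e.\ strictly below $p$; call its row $i_4$. Because $D_{j'}$ has strictly larger signature, there is a row $i_3$ with $p < i_3 < q_{j'}$ where $D_{j'}$ has a box but — by comparing with $D_j$ via the normalization/Gale order constraints — $D_j$ does not, or the crossing structure forces a box/$\times$ mismatch; one then takes $i_1, i_2$ among the rows just above $p$ (full in both columns) together with $p$ itself. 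Matching the resulting $4\times 2$ pattern against Figure~\ref{avoid} (and its column-swaps) is the combinatorial heart of the argument. I expect the bookkeeping to split into the cases Type II versus Type III, and within Type III into whether the two sub-second-crossing boxes of $D_j$ straddle rows that $D_{j'}$ fills or crosses.

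The main obstacle, I anticipate, is precisely this case analysis: ensuring that whatever the exact vertical positions of the second crossings $q_j$ and $q_{j'}$ and of the boxes below them, one can always pin down four rows exhibiting one of the six configurations $(A)$–$(F)$ (up to a column swap). The normalization hypothesis is what makes this tractable: since $D$ is normalized and $D_j, D_{j'}$ lie in the same region, the two columns agree on all rows above $p$ (both full) and the lexicographic order on the complements $\overline{D}_j, \overline{D}_{j'}$ tightly constrains where their ``$\times$'' marks can first diverge. I would use this to show that the first place the columns differ strictly below $p$ is forced into the shape of configuration $(C)$, $(D)$, $(E)$, or $(F)$ once there is a box of $D_j$ at or below $q_j$. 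A clean way to organize the write-up is: (i) reduce to the two columns $D_j \preceq D_{j'}$ with $\mathrm{sig}(D_j) < \mathrm{sig}(D_{j'})$; (ii) record that rows $1,\dots,p-1$ (or at least two of them) are boxes in both, and row $p$ is ``$\times$'' in both; (iii) assume for contradiction a box of $D_j$ at row $\ge q_j$ and extract the fourth row; (iv) check the finitely many patterns. Once the contradiction is reached in every case, $D_j$ has no box below its second crossing, i.e.\ $D_j$ is Type I, completing the proof. \qed
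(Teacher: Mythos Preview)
Your overall strategy---assume a box of $D_j$ exists below its second crossing and exhibit a forbidden configuration---is correct, but the execution is aimed at the wrong rows and the wrong patterns, and as written the case analysis would not close. Your plan takes $i_1,i_2$ among rows at or above $p$ (the common first-crossing row), where both columns are full of boxes; but none of the six configurations $(A)$--$(F)$ has a row of the form $(\text{box},\text{box})$ in its top two positions, so no choice of rows including a full-box row above $p$ can match any of them. Likewise, the configurations you flag as likely---$(C),(D),(E),(F)$---each require a box in column $j'$ at some row where $D_j$ also has a box below its second crossing, information you never establish.

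The paper's argument is much shorter and avoids all of this. The single observation you are missing is that the second-crossing row $q_j$ of $D_j$ carries a \emph{box} in column $j'$: since $D_{j'}$ has strictly larger signature and shares the first crossing at row $p$, its second crossing lies strictly below $q_j$. Thus the three rows
\[
p:\ (\times,\times),\qquad q_j:\ (\times,\text{box}),\qquad i:\ (\text{box},\ast)
\]
(where $i>q_j$ is the hypothetical box of $D_j$) already form the column-swap of configuration $(A)$; the fourth row required by the definition is supplied trivially by any later row, since the last row of $(A)$ is $(\ast,\ast)$. No Type~II/Type~III split, no rows above $p$, and no appeal to $(C)$--$(F)$ are needed.
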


\begin{proof}
Since $D_j$ does not reach the maximum signature, there exists a column $D_{j'}$  with $j<j'$
in the same region such that $D_{j'}$ has a larger signature than $D_j$. Assume that  the first two crossings    in column $j$ are at the positions   $(i_1,j)$ and $(i_2,j)$. Since $D_j$ and $D_j'$ lie in the same region,  the first crossing   in column $j'$ lies in row $i_1$. Suppose that the second crossing in column $j'$ lies in row $i_3$. Then we have $i_3>i_2$, and so the box $(i_2, j')$ belongs to $D_j'$. See Figure \ref{FF-22} for an illustration.
\begin{figure}[h]
    \centering
        \begin{tikzpicture}
        %\filldraw[lightgray] (0,-0.5) rectangle (0.5,-1);
        %\filldraw[lightgray] (0.5,-0.5) rectangle (1,-1.5);
        %\filldraw[lightgray] (0.5,-2) rectangle (1,-2.5);
        \node at (0.25,-1.65+2) {\vdots}; 
        \node at (0.75+.5,-1.65+2) {\vdots}; 
        \draw[black,step=0.5] (0,0) grid (0.5,-0.5);
        \draw[black] (0,0) -- (0.5,-0.5);
        \draw[black] (0,-0.5) -- (0.5,0);
        \draw[black,step=0.5] (0+1-.001,0.000001) grid (0.5+1,-0.5);
        \draw[black] (1+.5,0) -- (0.5+.5,-0.5);
        \draw[black] (0.5+.5,0) -- (1+.5,-0.5);
        \draw (0,-1.5) rectangle (0.5,-1);
        \draw[black] (0,-1) -- (0.5,-1.5);
        \draw[black] (0,-1.5) -- (0.5,-1);
        \draw (0+1,-1.5) rectangle (1.5,-1);
        \draw[black] (1,-1-1) -- (1.5,-1.5-1);
        \draw[black] (1,-1.5-1) -- (1.5,-1-1);
        \node at (-0.25,-0.25) {$i_1$};
        \node at (0.25,-1.65+1) {\vdots}; 
        \node at (0.75+.5,-1.65+1) {\vdots}; 
        \node at (0.25,-1.65-0.5) {\vdots}; 
        \node at (0.75+.5,-1.65) {\vdots}; 
        \draw[black,step=0.5] (0,-2.5-0.5) grid (0.5,-3-0.5);
        \draw[black,step=0.5] (0.999,-2) grid (0.5+1,-2.5);
       % \draw[black] (0,-3-0.5) -- (0.5,-2.5-0.5); 
        %\draw[black] (0,-2.5-0.5) -- (0.5,-3-0.5);
        \node at (0.25,-3.25) {$\boldsymbol{*}$};
        \node at (-0.25,-3.25) {$i$};
        \node at (-0.25,-1.25) {$i_2$};
        \node at (-0.25,-2.25) {$i_3$};
        \node at (0.25,0.25+0.5) {$j$};
        \node at (0.75+0.5,0.25+0.5) {$j'$};
    \end{tikzpicture}
    \caption{An illustration for the proof of Lemma \ref{longest}.}\label{FF-22}
\end{figure}

Let $(i,j)$ be any box below $(i_2, j)$. We conclude that $(i,j)$ does not belong to $D_j$ since otherwise the subdiagram of $D$, which is restricted to rows $\{i_1, i_2, i\}$  and columns 
$\{j, j'\}$, would form  the configuration obtained by swapping the columns of $(A)$ in Figure \ref{avoid}.  This implies   that  $D_j$ is a column of Type I.
\end{proof}

From Lemma \ref{longest}, we see that if there are $\mathrm{Type\ II} $ or $\mathrm{Type\ III} $ columns  occurring   in the same region, then they must be columns with  the maximum signature.
We discuss this in detail in the  next lemma.

\begin{lem}\label{BGTR}
Let  $D=(D_1,\ldots, D_n)$ be a normalized multiplicity-free diagram. Consider the columns in the same region. Then
\begin{itemize}
    \item[(1)] $\mathrm{Type\ II} $ columns and $\mathrm{Type\ III} $ columns cannot occur simultaneously;

    \item[(2)] If there is a $\mathrm{Type\ II} $   column occurring, then all $\mathrm{Type\ II} $   columns   have  the same configuration  (In this case,  the region starts with some $\mathrm{Type\ I} $ columns, followed by some copies of a $\mathrm{Type\ II} $ column, see the left picture in Figure \ref{fig:enter-label-T} for an illustration);

    \item[(3)] 
    If there is a $\mathrm{Type\ III} $   column occurring, then this is the only $\mathrm{Type\ III} $ column  in the  region (In this case,  the region starts with some $\mathrm{Type\ I} $ columns, followed by a single $\mathrm{Type\ III} $ column, see the right  picture  in Figure \ref{fig:enter-label-T} for an illustration).
\end{itemize}
\end{lem}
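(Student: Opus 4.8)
The plan is to pin down, for the columns lying in a single fixed region, the exact shape of each column, and then to reach a contradiction whenever the conclusion of the lemma fails by exhibiting four rows and two columns whose restriction is one of the twelve multiplicitous configurations.

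First I would record the structural data for a region. Let $r$ be the common row of the first crossing of its columns and let $s$ be the maximum signature among them. By Lemma~\ref{longest}, every $\mathrm{Type\ II}$ or $\mathrm{Type\ III}$ column of the region has signature exactly $s$; hence for such a column rows $1,\dots,r-1$ are boxes, row $r$ is the first crossing, rows $r+1,\dots,r+s$ are boxes, row $r+s+1$ is the second crossing, and below row $r+s+1$ it contains exactly one box (if $\mathrm{Type\ II}$) or at least two (if $\mathrm{Type\ III}$). I would also use the embedding of $D$ into a larger grid $[m]\times[m]$ — already invoked to guarantee two crossings per column — taken large enough that there are arbitrarily many rows below any prescribed row; these rows supply the unrestricted ``$\boldsymbol{*}$''-rows at the bottom of the configurations. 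A short lexicographic computation then shows that in the region every $\mathrm{Type\ I}$ column precedes every $\mathrm{Type\ II}$ and $\mathrm{Type\ III}$ column: a $\mathrm{Type\ I}$ column of signature $s'\le s$ has $\overline{D}_j=\{r,\,r+s'+1,\,r+s'+2,\,\dots\}$ with no gaps below its second crossing, and is therefore lexicographically smaller than $\overline{D}_{j'}$ for any column that carries a box below its second crossing (such a column must skip a value). This gives the ``the region starts with some $\mathrm{Type\ I}$ columns'' part of (2) and (3).

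The three parts of the lemma are then subsumed by one statement: \emph{if $j<j'$ and the columns $D_j,D_{j'}$ (possibly equal as subsets of $[n]$) each contain a box below their second crossing, then both are $\mathrm{Type\ II}$ and $D_j=D_{j'}$.} If $D_j=D_{j'}$ they cannot both be $\mathrm{Type\ III}$: choosing two of their boxes $p_1<p_2$ below $r+s+1$, the rows $r,\,r+s+1,\,p_1,\,p_2$ restrict to configuration $(C)$. If $D_j\neq D_{j'}$, I would first extract from $\overline{D}_j<_{\mathrm{lex}}\overline{D}_{j'}$ the positional fact that the least row $m$ below $r+s+1$ at which the two columns differ must hold a box of $D_{j'}$ but not of $D_j$ — otherwise $\overline{D}_{j'}$ reaches $m$ while $\overline{D}_j$ skips it, forcing $\overline{D}_{j'}<_{\mathrm{lex}}\overline{D}_j$. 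Then, assuming the conclusion fails, I would run a case analysis on the placement of the boxes below $r+s+1$, in each case producing the four rows by one recipe: the first row is $r$, giving $(\times,\times)$; the second is either $r+s+1$ (a second $(\times,\times)$, heading for $(C)$) or the row $m$ (giving $(\times,\square)$, heading for $(A)$); and the last two rows are box-rows of $D_j$ and $D_{j'}$ below $r+s+1$, with the grid enlargement supplying a trailing unrestricted row when needed. For instance: if $D_j$ has a box $q_0$ strictly between $r+s+1$ and $m$ (necessarily a box of $D_{j'}$ as well), the rows $r,\,r+s+1,\,q_0,\,m$ restrict to $(C)$ with columns interchanged; if $D_j$ has two boxes $p^*<p^{**}$ below $m$, the rows $r,\,m,\,p^*,\,p^{**}$ restrict to $(A)$ with columns interchanged; and if both columns carry a single box, at rows $q$ of $D_j$ and $q'$ of $D_{j'}$, the positional fact forces $q'<q$, and $r,\,q',\,q$ together with any row below $q$ restrict to $(A)$ with columns interchanged. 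The remaining branches — in which a box of $D_j$ equals a box of $D_{j'}$ with a further box below it — are handled the same way, so that only $(A)$ and $(C)$, up to swapping the two columns, are ever used.

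I expect the bookkeeping of this case analysis to be the main obstacle. The recipe is uniform — pick two ``crossing rows'' among $r,\,r+s+1,\,m$ on top, two ``box rows'' below, then pad — but in each branch one must recognise which of $(A)$ and $(C)$ is produced, in which left-to-right column order, and check that the four chosen rows are genuinely distinct and correctly ordered. What keeps the analysis finite is precisely the rigidity from the first step: above row $r+s+1$ every column of the region is completely determined, so the only freedom, and hence the only source of cases, is where the boxes below the second crossings sit.
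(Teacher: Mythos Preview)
Your proposal is correct and, at the level of which forbidden configurations are invoked, is essentially the paper's argument: both proofs use only $(A)$ and $(C)$ (and their column-swaps), and the basic moves---pick the row of the first crossing, the row of the second crossing, and one or two box-rows below---are the same.

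The difference is organizational. The paper treats (1), (2), (3) separately. In (1) it fixes the unique box of the Type~II column and first proves an auxiliary ``observation (O)'' (the Type~III column has no box strictly above that row), which then splits into two short subcases yielding swapped~$(C)$ or $(A)$; (2) is a one-line $(A)$ argument; (3) compares the \emph{topmost} box-rows $h_1,\ell_1$ of the two Type~III columns and splits on $h_1=\ell_1$ (giving $(C)$) versus $h_1\neq\ell_1$ (giving $(A)$). You instead fold all three parts into a single claim and replace (O) and the $h_1/\ell_1$ comparison by one structural fact extracted from the normalization order: at the least differing row $m$ below the second crossing, $D_{j'}$ has a box and $D_j$ does not. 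This is a clean way to control the cases, and it also gives for free the parenthetical assertion that Type~I columns precede Type~II/III columns in the region, which the paper leaves implicit.

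Two small comments on your sketch. First, your ``both columns carry a single box'' case actually works verbatim whenever $D_j$ has a single box below the second crossing, regardless of whether $D_{j'}$ is Type~II or Type~III; so your list of three cases already covers everything and the ``remaining branches'' clause is not needed. Second, in that same case you derive $q'<q$ from the lex order---this is correct, but note that the $(A)$-configuration would appear (in one column order or the other) for either sign of $q-q'$, so the lex step is optional here.
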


\begin{proof}
(1) Suppose to the contrary that there are both a $\mathrm{Type\ II} $  column (say, $D_{j_1}$) and a $\mathrm{Type\ III} $ column (say, $D_{j_2}$)  occurring in the same region. We only consider the case for $j_1<j_2$, and the arguments for  $j_1>j_2$ are the same.  Since 
column $j_1$ and column $j_2$ are in the same region,  their  first crossings   lie in the same row, say row $i_1$. 
By Lemma \ref{longest}, $D_{j_1}$ and $D_{j_2}$ have the same signature, and so 
the second  crossings in   column $j_1$ and column $j_2$ are also in the same row, say  row $i_2$. 
Let $(i, j_1)$ be the unique  box of $D_{j_1}$ below row $i_2$.  Since $D_{j_2}$ is of $\mathrm{Type\ III} $, it has at least two boxes below row $i_2$. We have the following observation.
\begin{itemize}
    \item [(O)]  For $i_2<h<i$, there is no box $(h, j_2)$ belonging to $D_{j_2}$, since otherwise the subdiagram, restricted to rows $\{i_2, h, i\}$ and columns $\{j_1, j_2\}$, would become   the configuration obtained by swapping the columns of $(A)$   in Figure \ref{avoid}.
\end{itemize}

The arguments are divided into two cases.

Case 1: The box $(i, j_2)$ belongs to $D_{j_2}$.  By the observation in (O), there exists another  box $(\ell, j_2)$ of $D_{j_2}$ with $\ell>i$. So the subdiagram, restricted to rows $\{i_1,i_2, i, \ell\}$ and columns $\{j_1, j_2\}$, is the configuration by swapping the columns of $(C)$ in Figure \ref{avoid}, leading to a contradiction.

Case 2: The box $(i, j_2)$ does not belong to $D_{j_2}$. Still, by the observation in  (O), one can choose a box $(\ell,j_2)$ of $D_{j_2}$ with $\ell>i$.
In this case, the subdiagram, restricted to rows $\{i_2, i, \ell\}$ and columns $\{j_1, j_2\}$, forms the configuration    $(A)$ in Figure \ref{avoid},  yielding  a contradiction. This concludes the assertion in (1). 

(2) Let $D_{j_1}$ and $D_{j_2}$ with $j_1<j_2$ be two columns of $\mathrm{Type\ II}$. As in (1), let $(i_1, j_1), (i_2, j_1)$ (respectively, $(i_1, j_2), (i_2, j_2)$) be boxes containing  the first two crossings in column $j_1$ (respectively, column $j_2$).
The unique box of $D_{j_1}$ below row $i_2$ must be in the same row as the unique box of $D_{j_2}$ below row $i_2$, since otherwise there would result in  a subdiagram of $D$ which is the same as the  configuration    $(A)$ (possibly by swapping the columns) in Figure \ref{avoid}. 

(3)  Suppose to the contrary that there are two    $\mathrm{Type\ III} $ columns (say, $D_{j_1}$ and $D_{j_2}$ with $j_1<j_2$)  occurring. Again, let us use  $(i_1, j_1), (i_2, j_1)$ (respectively, $(i_1, j_2), (i_2, j_2)$) to denote  positions of the first two crossings in column $j_1$ (respectively, column $j_2$).
In  $D_{j_1}$ (respectively, $D_{j_2}$), locate the   top most two boxes  below row $i_2$, denoted    $(h_1,j_1)$
 and $(h_2,j_1)$ with $i_2<h_1<h_2$ (respectively, $(\ell_1,j_2)$
 and $(\ell_2,j_2)$ with $i_2<\ell_1<\ell_2$). 
 If $h_1=\ell_1$, then the subdiagram, restricted to rows $\{i_1, i_2, h_1, h_2\}$ and columns $\{j_1, j_2\}$, is the configuration $(C)$ in Figure \ref{avoid}. If $h_1\neq \ell_1$, then the subdiagram, restricted to rows $\{i_2, h_1, \ell_1\}$ and columns $\{j_1, j_2\}$, is the configuration $(A)$  (possibly after a swapping of columns) in Figure \ref{avoid}. In both situations, we are led to a contradiction.
\end{proof}

\begin{figure}[h]
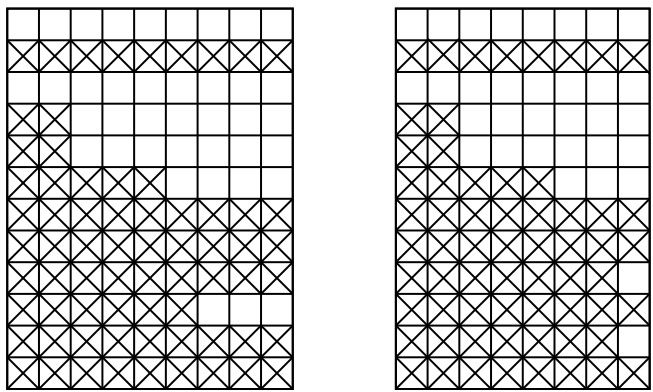

    \centering

\tikzset{every picture/.style={line width=0.75pt}} %set default line width to 0.75pt        

% [inline block 0: 1 envs, 23257 chars -> data_tex | \begin{tikzpicture}[x=0.6pt,y=0.6pt,yscale=-1,xscale=1] %uncomment if require: \path (0,525); %set diagram left start at...]

    \caption{Illustrations of the regions in (2) and (3) of Lemma \ref{BGTR}, respectively.}
    \label{fig:enter-label-T}
\end{figure}

\subsection{Columns in distinct regions}

We now investigate the   columns in distinct regions. Both lemmas in this subsection will be used in Subsections \ref{sub4.2-1} and \ref{subsection-4.3}.

\begin{lem}\label{lower than second last}
Let $D=(D_1,\ldots, D_n)$ be a normalized multiplicity-free diagram. Suppose that  $D_{j_1}$ and $D_{j_2}$ with $j_1<j_2$ lie in distinct regions, and the first crossing of column $j_1$ (respectively, column $j_2$) is in row $i_1$ (respectively, row $i_2$).  Suppose further that $\emptyset \neq D_{j_1}=\{d_1<d_2<\cdots<d_k\}$, and that  $D_{j_2}\neq [i_2-1]$ (that is, there is at least one box of $D_{j_2}$ lying below row $i_2$). Then   we have $i_2>d_{k-1} $ (here we set $d_0=0$). In other words, the first crossing in column $j_2$ is  below row $d_{k-1}$.
\end{lem}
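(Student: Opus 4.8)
The plan is to argue by contradiction, reducing everything to the multiplicitous configurations of Figure~\ref{avoid}. If $k=1$ then $d_{k-1}=d_{0}=0$ and $i_2>0$ is vacuous, so assume $k\ge 2$. First, a consequence of normalization used without further comment: since $D=\mathrm{Norm}(D)$ and $j_1<j_2$ lie in distinct regions, the columns of $D$ are sorted by $\prec$, i.e.\ by the lexicographic order on the complements $\overline{D}_j$, whose smallest entry is the row of the first crossing of column $j$; hence $i_1<i_2$. Now suppose, for contradiction, that $i_2\le d_{k-1}$, so $i_1<i_2\le d_{k-1}<d_k$. This forces: $(d_{k-1},j_1)$ and $(d_k,j_1)$ are boxes of $D$; the cell $(i_1,j_1)$ is a crossing; and, since every row above $i_2$ is a box of column $j_2$, the cell $(i_1,j_2)$ is a box while $(i_2,j_2)$ is a crossing.

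The contradiction will come by exhibiting, inside columns $j_1$ and $j_2$, one of the twelve configurations forbidden in a multiplicity-free diagram (the six of Figure~\ref{avoid} and their column swaps); I would organize the search by the status of the two cells $(d_{k-1},j_2)$ and $(d_k,j_2)$. If $(d_{k-1},j_2)$ is a box, then $i_2<d_{k-1}$ (else $(i_2,j_2)$ would be simultaneously a box and a crossing), and the rows $i_1<i_2<d_{k-1}$ of columns $j_1,j_2$ exhibit configuration $(D)$ when $(i_2,j_1)$ is a box, and, after adjoining the row $d_k$, configuration $(B)$ when $(i_2,j_1)$ is a crossing. If $(d_{k-1},j_2)$ is a crossing but $(d_k,j_2)$ is a box, then the rows $i_1<d_{k-1}<d_k$ exhibit configuration $(D)$. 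Both are immediate from the forced entries.

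The substantive case is when $(d_{k-1},j_2)$ and $(d_k,j_2)$ are both crossings, where the two lowest boxes of column $j_1$ no longer suffice on their own. Here I would use the hypothesis $D_{j_2}\ne[i_2-1]$ to choose a box $(p,j_2)\in D$ with $p>i_2$; since $(d_{k-1},j_2)$ and $(d_k,j_2)$ are crossings, $p\notin\{d_{k-1},d_k\}$, and I would split on the position of $p$. If $p>d_k$, then $(p,j_1)$ is a crossing and the rows $i_1<d_{k-1}<d_k<p$ exhibit configuration $(E)$. If $d_{k-1}<p<d_k$, then $(p,j_1)$ is a crossing (it lies strictly between $d_{k-1}$ and $d_k$) and the rows $i_1<d_{k-1}<p<d_k$ exhibit configuration $(F)$ with its columns interchanged. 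If $i_2<p<d_{k-1}$, I would split once more according to whether $(p,j_1)$ is a box or a crossing and to the status of $(i_2,j_1)$: working with the rows $i_1,i_2,p$ and, in two of the sub-branches, also $d_{k-1}$ or $d_k$, each sub-branch yields one of $(D)$, $(B)$, $(A)$, or $(F)$ with columns interchanged.

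The main obstacle lies entirely in this last case: across its sub-branches one must choose the right three or four rows, in the right column order, so that the box/crossing pattern matches \emph{exactly} one of the configurations of Figure~\ref{avoid} --- adjacent choices tend to give patterns close to, but not among, the forbidden twelve, so the bookkeeping of where boxes and crossings sit in the two columns must be done carefully. A routine point, handled uniformly: several configurations of Figure~\ref{avoid} carry a ``$\boldsymbol{*}$'' in their bottom row, so after matching three rows one needs an auxiliary fourth row below them, which always exists after embedding $[n]\times[n]$ in a larger grid if necessary, exactly as in the reduction at the start of Subsection~\ref{subss}. Everything outside this last case follows directly from the forced entries above.
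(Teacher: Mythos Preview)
Your argument is correct and proceeds, like the paper's, by contradiction via the forbidden configurations of Figure~\ref{avoid}; the only difference is the case decomposition. The paper splits at the top level on whether $(i_2,j_1)$ is a box or a crossing and then shows \emph{every} cell of column $j_2$ below row $i_2$ is a crossing (forcing $D_{j_2}=[i_2-1]$), whereas you split first on the status of $(d_{k-1},j_2)$ and $(d_k,j_2)$ and then locate a single offending box $(p,j_2)$. Your organization leads to more sub-branches in the last case, but each is elementary; the paper's choice to branch on $(i_2,j_1)$ collapses several of your sub-cases at once, which is why its write-up is shorter. One small wording point: in your final case the four sub-branches yield $(D)$, $(B)$, swapped-$(F)$, and swapped-$(A)$ respectively (for instance, when both $(p,j_1)$ and $(i_2,j_1)$ are crossings, the rows $i_2<p<d_{k-1}$ give swapped-$(A)$), so your list ``$(D)$, $(B)$, $(A)$, or $(F)$ with columns interchanged'' should be read with the column swap applying to both $(A)$ and $(F)$.
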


Before giving a proof of this lemma, let us explain that  the assumption  that $D_{j_2}\neq [i_2-1]$ is natural.
Suppose that $D$ has a column $D_j=[m]$. Such a column is called a standard interval in \cite{mestrans}.  Then there is only one choice of $C_j$ such that $C_j\leq D_j$, namely, $C_j=D_j=[m]$.
So the diagram $D'$ obtained by removing $D_j$ from $D$ inherits all information of $D$ in the sense that $\chi_{D}(x)=x_1\cdots x_m\, \chi_{D'}(x)$.
So, to prove Theorem \ref{MMain},  we may without loss of generality   require that   $D$ has no  standard intervals. 

\begin{proof}[Proof of Lemma \ref{lower than second last}]
The assertion is trivial when $k=1$. Assume now  that $k\geq 2$.
Suppose to the contrary that  $i_2\leq d_{k-1}$. We shall deduce the contradiction that $D_{j_2}=[i_2-1]$.  Since $D_{j_1}$ and $D_{j_2}$ lie in distinct regions, we have  $i_1<i_2$.  Our aim is  to verify that any box in column   $j_2$, lying below row $i_2$, contains a crossing.  
There are two cases.

Case 1: The box $(i_2, j_1)$ belongs to $D_{j_1}$. 
In this case, we have  $d_k>i_2$. To avoid the configuration $(D)$ in Figure \ref{avoid}, the box $(d_k, j_2)$ must contain a crossing, as illustrated in Figure \ref{AASJ-12}. 
\begin{figure}[h]
        \centering
		\begin{tikzpicture}

                \draw (0.5,1) rectangle (1,0.5);
                \draw (0.5+1,1) rectangle (1+1,0.5);
                \draw (0.5,1-1) rectangle (1,0.5-1);
                \draw (0.5+1,1-1) rectangle (1+1,0.5-1);
                \draw (0.5,1-2) rectangle (1,0.5-2);
                \draw (0.5+1,1-2) rectangle (1+1,0.5-2);
                \draw[black] (0.5,1) -- (1,0.5);
                \draw[black] (1,1) -- (0.5,0.5);
               % \draw[black] (1,0) -- (1.5,0.5);
               % \filldraw [lightgray] (1,1) rectangle (1.5,0.5);
                %\filldraw [lightgray] (1,0) rectangle (0.5,0.5);
               
                %\filldraw [lightgray] (1,0) rectangle (0.5,-0.5);
                %\draw[black,step=0.5] (0.5-0.0001,1) grid (1.5,-0.5);
                \draw[black] (1+0.5,0) -- (1.5+0.5,-0.5);
                \draw[black] (1+0.5,-0.5) -- (1.5+0.5,0);
                \draw[black] (1+0.5,0-1) -- (1.5+0.5,-0.5-1);
                \draw[black] (1+0.5,-0.5-1) -- (1.5+0.5,0-1);
                \node at (0,0.7) {$i_1$};
                \node at (0,0.2-0.5) {$i_2$};
                \node at (0,-0.3-1) {$d_k$};
                \node at (0.75, -1.3+0.5-1+3.2) {$j_1$};
                \node at (1.25+0.5, -1.3+0.5-1+3.2) {$j_2$};
		\end{tikzpicture}
        \caption{An illustration for the proof of Case 1. }\label{AASJ-12}
  
    \end{figure}
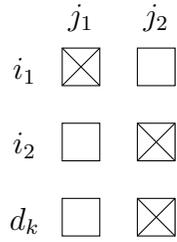
Consider the box $(i, j_2)$   with $i>d_k$. Since each box in column $j_1$, lying  below row $d_k$, contains a crossing, to avoid the configuration $(E)$ in Figure \ref{avoid}, the box $(i, j_2)$ must contain a crossing. 

It remains to  consider the box $(r, j_2)$ with $i_2<r<d_k$. 
We assert that  $(r, j_2)\not\in D_{j_2}$.
Suppose otherwise that 
$(r, j_2)\in D_{j_2}$.
If $(r, j_1)$ belongs to $D_{j_1}$, then the boxes of $D$ in rows $\{i_1, i_2, r\}$ and columns $\{j_1, j_2\}$ form the configuration $(D)$ in Figure \ref{avoid}, and if $(r, j_1)$ does not belong to $D_{j_1}$, then the boxes of $D$ in rows $\{i_1, i_2, r, d_p\}$ and columns $\{j_1, j_2\}$   form the  configuration by swapping the columns of   $(F)$  in Figure \ref{avoid}. In either  case, we are led to a contradiction. This concludes  the assertion.

Case 2: The box $(i_2, j_1)$ contains  a crossing.  In this case, we have $d_{k-1}>i_2$.   To avoid the configuration $(B)$ in Figure \ref{avoid}, the box $(d_{k-1}, j_2)$ must contain a crossing. Moreover, to avoid the configuration $(A)$ in Figure \ref{avoid},  any box in column $j_2$, which lies below row $d_{k-1}$, contains a crossing. The boxes in rows $\{i_1, i_2, d_{k-1}, d_k\}$ and columns $\{j_1, j_2\}$ look as depicted in Figure \ref{avoid BB}.  
\begin{figure}[h]
        \centering
		\begin{tikzpicture}

               %第二张图 
                %\filldraw [lightgray] (-3.5,-0.5) rectangle (-3,0);
                %\filldraw [lightgray] (-3.5,-0.5) rectangle (-3,-1);  
                %\filldraw [lightgray] (-3,0.5) rectangle (-2.5,1); 
               %\node at (-2.75,-0.25) {$\boldsymbol{*}$};
               %\node at (-2.75,-0.75) {$\boldsymbol{*}$};
                \draw (-3.5,1) rectangle (-3,0.5);
                \draw (-3.5+1,1) rectangle (-3+1,0.5);
                \draw (-3.5,1-1) rectangle (-3,0.5-1);
                \draw (-3.5+1,1-1) rectangle (-3+1,0.5-1);
                \draw (-3.5,1-2) rectangle (-3,0.5-2);
                \draw (-3.5+1,1-2) rectangle (-3+1,0.5-2);
                \draw (-3.5,1-2-1) rectangle (-3,0.5-2-1);
                \draw (-3.5+1,1-2-1) rectangle (-3+1,0.5-2-1);
                \draw[black] (-3,1) -- (-3.5,0.5);
                \draw[black] (-3,0.5) -- (-3.5,1);
                \draw[black] (-3,1-1) -- (-3.5,0.5-1);
                \draw[black] (-3,0.5-1) -- (-3.5,1-1);
                \draw[black] (-3+1,1-1) -- (-3.5+1,0.5-1);
                \draw[black] (-3+1,0.5-1) -- (-3.5+1,1-1);
                \draw[black] (-3+1,1-1-1) -- (-3.5+1,0.5-1-1);
                \draw[black] (-3+1,0.5-1-1) -- (-3.5+1,1-1-1);
                \draw[black] (-3+1,1-1-1-1) -- (-3.5+1,0.5-1-1-1);
                \draw[black] (-3+1,0.5-1-1-1) -- (-3.5+1,1-1-1-1);
                \node at (-3.25, -1.3+0.5-1+3.2) {$j_1$};
                \node at (-2.25, -1.3+0.5-1+3.2) {$j_2$};
                \node at (-4,0.7) {$i_1$};
                \node at (-4,0.2-0.5) {$i_2$};
                \node at (-4,-0.3-1) {$d_{k-1}$};
                \node at (-4,-0.3-2) {$d_k$};
               
                %\draw[black,step=0.5] (-3.5,1) grid (-2.5,-1);
 		\end{tikzpicture}
   \caption{An illustration for the proof of Case 2.}
        \label{avoid BB}
    \end{figure}
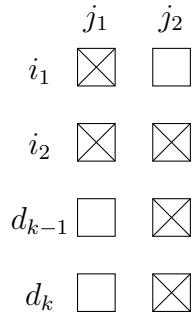

We still need to check that any box in column $j_2$, which lies between row $i_2$ and row $d_{k-1}$, contains a crossing. Suppose otherwise that there exists a box $(i, j_2)$ with $i_2<i<d_{k-1}$ belonging to $D_{j_2}$. Consider the box $(i, j_1)$. If it belongs to $D_{j_1}$, then the boxes in   rows $\{i_1,i_2, i, d_{k-1}\}$ and columns $\{j_1, j_2\}$ would form the configuration $(B)$ in Figure \ref{avoid}, and otherwise, the boxes in  rows $\{i_2, i, d_{k-1}\}$ and columns $\{j_1, j_2\}$ would form the configuration by swapping the columns of $(A)$   in Figure \ref{avoid}.  This verifies  that any box $(i, j_2)$ with $i_1<i<d_{p-1}$ contains a crossing. 
So the proof is complete. 
\end{proof}

\begin{lem}\label{only one}
Take the same assumptions as in Lemma \ref{lower than second last} (so $i_2>d_{k-1}$). 
\begin{itemize}
    \item[(1)] If  
$i_2<d_k-1$, then $D_{j_2}=[i_2-1]\cup \{d_k\}$.

    \item[(2)] If  
$i_2=d_k-1$, then $[i_2-1]\cup \{d_k\} \subseteq D_{j_2}$.
\end{itemize}

\end{lem}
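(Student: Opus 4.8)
The plan is to determine exactly which cells of column $j_2$ are boxes, working only inside the two columns $j_1$ and $j_2$ and repeatedly invoking the multiplicity-free hypothesis. The key preliminary remark is that in both cases we have $d_{k-1}<i_2<d_k$: the inequality $d_{k-1}<i_2$ comes from Lemma \ref{lower than second last}, and $i_2<d_k$ is built into the hypotheses of (1) and (2). Since the elements of $D_{j_1}$ are precisely $d_1<\cdots<d_k$ and there is no element of $D_{j_1}$ strictly between $d_{k-1}$ and $d_k$, this means that $(d_k,j_1)$ is a box while $(i,j_1)$ is empty for every $i$ with $d_{k-1}<i<d_k$ or $i>d_k$; in particular $(i_2,j_1)$ is empty. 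On the $j_2$ side all we know a priori is $[i_2-1]\subseteq D_{j_2}$ and $(i_2,j_2)$ empty; the lemma asserts that the unique box of $D_{j_2}$ strictly below row $i_2$ is the one in row $d_k$ (with possibly extra boxes below it in case (2)).

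First I would show that $D_{j_2}$ has no box strictly between rows $i_2$ and $d_k$. If $(i,j_2)$ were such a box ($i_2<i<d_k$), then the restriction of $D$ to rows $i_2<i<d_k$ and columns $j_1<j_2$ would be the configuration obtained from $(A)$ in Figure \ref{avoid} by swapping its columns: $(i_2,j_1),(i_2,j_2),(i,j_1)$ are empty, $(i,j_2)$ and $(d_k,j_1)$ are boxes, and $(d_k,j_2)$ falls into the unrestricted ``$*$'' cell, so this works whether or not $(d_k,j_2)$ is a box. Next I would prove $(d_k,j_2)\in D_{j_2}$. Since $D_{j_2}\neq[i_2-1]$, column $j_2$ has a box in some row $i>i_2$; by the previous step together with the contrary assumption $(d_k,j_2)\notin D_{j_2}$, necessarily $i>d_k$, and then rows $i_2<d_k<i$ and columns $j_1<j_2$ give configuration $(A)$ itself (now $(d_k,j_2)$ is empty, $(i,j_2)$ is a box, and $(i,j_1)$ sits in the ``$*$'' cell). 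This already yields part (2), which asks only for $[i_2-1]\cup\{d_k\}\subseteq D_{j_2}$. For part (1), where $i_2<d_k-1$, it remains to exclude boxes of $D_{j_2}$ below row $d_k$: if $(i,j_2)$ is one, then rows $i_2<i_2+1<d_k<i$ (pairwise distinct precisely because $i_2+1<d_k$) and columns $j_1<j_2$ give the configuration obtained from $(C)$ by swapping its columns --- rows $i_2$ and $i_2+1$ are empty in both columns (the latter using the ``no box in between'' step), row $d_k$ is a box in both columns, and in row $i$ the cell $(i,j_1)$ is empty (it is the ``$*$'' cell) while $(i,j_2)$ is a box. Together with $(i_2,j_2)\notin D$ and $[i_2-1]\subseteq D_{j_2}$, this forces $D_{j_2}=[i_2-1]\cup\{d_k\}$.

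The point needing the most care is the ordering of the arguments around the cell $(d_k,j_2)$, whose status is unknown at the start. The ``no box strictly between $i_2$ and $d_k$'' step has to be arranged so that $(d_k,j_2)$ lands in an unrestricted cell of configuration $(A)$, and one must prove $(d_k,j_2)\in D_{j_2}$ before using configuration $(C)$ in part (1), since that argument genuinely requires a box in row $d_k$ of column $j_2$. The remaining bookkeeping is routine: when $k=1$ one reads $d_{k-1}$ as $0$, but the arguments above never actually use row $d_{k-1}$, so no separate case is needed, and the distinctness of all chosen rows is immediate from $d_{k-1}<i_2<i_2+1\le d_k-1<d_k<i$ (in part (1)) and $i_2<d_k<i$ (in part (2)).
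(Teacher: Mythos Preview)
Your argument is correct and follows essentially the same route as the paper's proof: first rule out boxes of $D_{j_2}$ strictly between rows $i_2$ and $d_k$ via (swapped) configuration $(A)$, then force $(d_k,j_2)\in D_{j_2}$ using $(A)$ together with $D_{j_2}\neq[i_2-1]$, and finally in case~(1) exclude boxes below row $d_k$ via swapped configuration $(C)$. The only cosmetic difference is in that last step, where you use the four rows $i_2<i_2+1<d_k<i$ while the paper uses $i_2<d_k-1<d_k<i$; both choices yield the same forbidden pattern.
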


\begin{proof}
Keep in mind that $i_2>d_{k-1}$. 

(1) First, to avoid the configuration obtained  by swapping the columns of $(A)$   in Figure \ref{avoid}, the box  $(d_k-1, j_2)$ must contain  a crossing. So the boxes in rows $\{d_{k-1}, i_2, d_k-1, d_k\}$ and columns $\{j_1, j_2\}$ are as illustrated in Figure \ref{RGI-21}.
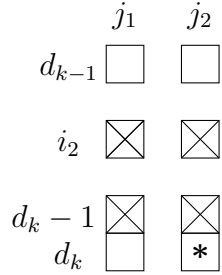
\begin{figure}[ht]
        \centering
		\begin{tikzpicture}

               %第二张图 
                %\filldraw [lightgray] (-3.5,-0.5) rectangle (-3,0);
                %\filldraw [lightgray] (-3.5,-0.5) rectangle (-3,-1);  
                %\filldraw [lightgray] (-3,0.5) rectangle (-2.5,1); 
               %\node at (-2.75,-0.25) {$\boldsymbol{*}$};
               %\node at (-2.75,-0.75) {$\boldsymbol{*}$};
                \draw (-3.5,1) rectangle (-3,0.5);
                \draw (-3.5+1,1) rectangle (-3+1,0.5);
                \draw (-3.5,1-1) rectangle (-3,0.5-1);
                \draw (-3.5+1,1-1) rectangle (-3+1,0.5-1);
                \draw (-3.5,1-2) rectangle (-3,0.5-2);
                \draw (-3.5+1,1-2) rectangle (-3+1,0.5-2);
                \draw (-3.5,1-2-1+0.5) rectangle (-3,0.5-2-1+0.5);
                \draw (-3.5+1,1-2-1+0.5) rectangle (-3+1,0.5-2-1+0.5);
                %\draw[black] (-3,1) -- (-3.5,0.5);
                %\draw[black] (-3,0.5) -- (-3.5,1);
                \draw[black] (-3,1-1) -- (-3.5,0.5-1);
                \draw[black] (-3,0.5-1) -- (-3.5,1-1);
                \draw[black] (-3,1-1) -- (-3.5,0.5-1);
                \draw[black] (-3,0.5-1) -- (-3.5,1-1);
                \draw[black] (-3+1,1-1) -- (-3.5+1,0.5-1);
                \draw[black] (-3+1,0.5-1) -- (-3.5+1,1-1);
                \draw[black] (-3+1,1-1-1) -- (-3.5+1,0.5-1-1);
                \draw[black] (-3+1,0.5-1-1) -- (-3.5+1,1-1-1);
                \draw[black] (-3,1-1-1) -- (-3.5,0.5-1-1);
                \draw[black] (-3,0.5-1-1) -- (-3.5,1-1-1);
                %\draw[black] (-3+1,1-1-1-1) -- (-3.5+1,0.5-1-1-1);
                %\draw[black] (-3+1,0.5-1-1-1) -- (-3.5+1,1-1-1-1);
                \node at (-3.25, -1.3+0.5-1+3.2) {$j_1$};
                \node at (-2.25, -1.3+0.5-1+3.2) {$j_2$};
                \node at (-4,0.7) {$d_{k-1}$};
                \node at (-4,0.2-0.5) {$i_2$};
                \node at (-4.2,-0.3-1) {$d_{k}-1$};
                \node at (-4,-0.3-2+0.5) {$d_k$};
                \node at (-2.25,-1.75) {$\boldsymbol{*}$};
               
                %\draw[black,step=0.5] (-3.5,1) grid (-2.5,-1);
 		\end{tikzpicture}
   \caption{An illustration for the proof of (1).}\label{RGI-21}
    \end{figure}

For the same reason as above, any box in column $j_2$, lying between row $i_2$ and row $d_k-1$, contains a crossing. Let us look at the box $(d_k, j_2)$. We assert that this box belongs to $D_{j_2}$. Suppose otherwise that $(d_k, j_2)$ contains a crossing. To avoid the configuration by swapping the columns of $(A)$  in Figure \ref{avoid}, any box in column $j_2$ lying below row $d_k$ contains a crossing. This, along with the above analysis,  implies that $D_{j_2}=[i_2-1]$, contradicting  the assumption that $D_{j_2}\neq [i_2-1]$. So    the assertion is verified.  

Now, because $(d_k, j_2)\in D_{j_2}$, any box $(i, j_2)$ with $i>d_k$ contains a crossing since otherwise the boxes in rows $\{i_2, d_k-1, d_k, i\}$ and columns $\{j_1, j_2\}$ would yield the  configuration by swapping the columns of $(C)$  in Figure \ref{avoid}. This shows that $D_{j_2}=[i_2-1]\cup \{d_k\}$.

(2) We need to check  that $(d_k, j_2)\in D_{j_2}$. Suppose otherwise that 
$(d_k, j_2)\not\in D_{j_2}$. Since $D_{j_2}\neq [i_2-1]$, there exists a box $(i, j_2)$ with $i>d_k$ belonging to $D_{j_2}$. Then the subdiagram, restricted to rows $\{d_k-1, d_k, i\}$ and columns $\{j_1, j_2\}$, becomes   the configuration $(A)$ in Figure \ref{avoid}, leading to a contradiction.  
\end{proof}

\section{Proof of Theorem \ref{MMain}}\label{MMain-pp}

To complete the proof of Theorem \ref{MMain}, by Proposition \ref{PPPo}, it is equivalent to showing that when $D=(D_1,\ldots, D_n)$ is multiplicity-free, the eigenspace 
\[
\mathrm{Span}_{\mathbb{C}}\left\{\mathrm{det}\left(Y_{D}^C\right)\colon  C\leq D, \, x^C=x^a\right\}
\] 
has dimension   one. 
We   do this by showing  that $\mathrm{det}\left(Y_{D}^C\right)=\mathrm{det}\left(Y_{D}^{C'}\right)$ for  $C, C'\leq D$ with $x^C=x^{C'}$.  In light of Proposition \ref{ABC-1}, this will be  achieved by establishing a bijection   between the sets $\mathcal{F}_{D}(C)$ and $\mathcal{F}_{D}(C')$ which preserves both the sign and the weight.

\begin{theo}\label{main-bijection}
Let $D=(D_1,\ldots, D_n)$ be a normalized multiplicity-free diagram, and let $C=(C_1,\ldots, C_n) $ and $ C'=(C_1',\ldots, C'_n)$ be two  distinct  diagrams,  that are less than or equal to $D$, such that $x^C=x^{C'}$. Then there is a sign- and weight-preserving   bijection between the sets $\mathcal{F}_{D}(C) $  and  $\mathcal{F}_{D}(C')$.
\end{theo}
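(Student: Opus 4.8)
The plan is to construct the bijection column by column, exploiting the fact that $x^C = x^{C'}$ is a \emph{global} multiset condition while membership $C_j \le D_j$ is a \emph{local} (columnwise) condition. First I would reduce to a normal form: by the remark following Lemma~\ref{lower than second last}, I may assume $D$ has no standard intervals, so every column has at least two crossings and the region/type analysis of Section~\ref{sect3-rr} applies verbatim. The key structural input is that, thanks to Lemmas~\ref{longest}, \ref{BGTR}, \ref{lower than second last} and \ref{only one}, a multiplicity-free diagram is extremely rigid: within each region almost all columns are of Type~I, at most one ``exceptional'' column (Type~II repeated, or a single Type~III) occurs, and columns in later regions have their first crossing pushed strictly below the penultimate box of any earlier column. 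I would first use this rigidity to pin down, for \emph{fixed} $D$, exactly which sets $C_j$ with $C_j \le D_j$ are possible, and how the condition $x^C = x^{C'}$ forces $C$ and $C'$ to differ only in a controlled way — essentially, $C$ and $C'$ agree outside a small ``active window'' of rows/columns, and inside that window the difference is a single transposition-like move of one element of the exponent vector between two columns.

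Next, the core of the argument: I would show that if $C \ne C'$ with $x^C = x^{C'}$ and both $\le D$, then there are indices $j_1 < j_2$ and rows so that $C$ and $C'$ are related by moving one value from column $j_1$ to column $j_2$ (or vice versa). The case analysis splits according to whether $j_1, j_2$ lie in the same region or distinct regions. For columns in the same region, Lemma~\ref{BGTR} says the two columns are ``nested'' in a very specific way (the left one is an initial segment of the type data of the right one), and the Gale-order constraints $C_{j_i} \le D_{j_i}$ leave essentially no freedom; one reads off directly that $\mathcal{F}_D(C)$ and $\mathcal{F}_D(C')$ are in obvious bijection because the flagged-filling constraints (entry in box $(i,j)$ at most $i$, distinct entries per column) are symmetric under the swap. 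For columns in distinct regions, Lemmas~\ref{lower than second last} and \ref{only one} force one of the two columns to be of the form $[i_2-1]$ or $[i_2-1]\cup\{d_k\}$, which again rigidly determines the filling possibilities. In each case I would exhibit the bijection $\mathcal{F}_D(C) \to \mathcal{F}_D(C')$ explicitly: take a filling $F \in \mathcal{F}_D(C)$, locate the boxes in the active columns carrying the ``moving'' value, and reassign entries in the minimal way consistent with being a flagged filling of the target content; then check this is an involution-like map that is its own inverse (or pairs with the symmetric construction).

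Finally I would verify the two preservation properties. Weight preservation: the map only relocates which \emph{column} a given value sits in among a pair of columns, but the weight $y^F = \prod_{(i,j)\in D} y_{c_{ij}\,i}$ depends on the pair (row, entry) not on the column, so I must check the row set occupied by the moving value is the same before and after — this is exactly what the structural lemmas guarantee, since the forced shape of the columns means the moving value lands in a predetermined row. Sign preservation: $\mathrm{sgn}(F) = (-1)^{\mathrm{inv}(F)}$, and $\mathrm{inv}$ is a sum of columnwise inversion numbers; I would compute the change in $\mathrm{inv}(F_{j_1}) + \mathrm{inv}(F_{j_2})$ under the move and show it is even, by comparing the two column reading words — removing a value from one sorted-ish word and inserting it into another changes the inversion count by a predictable amount, and the rigidity pins down where the value sits in each word. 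The main obstacle I anticipate is \emph{not} any single case but the bookkeeping to show the ``active window'' is genuinely small and that $C, C'$ cannot differ in a more complicated, non-local way; once that reduction is in hand, each individual bijection is forced and the sign/weight checks are short. A secondary subtlety is handling Type~III columns and the boundary case $i_2 = d_k - 1$ in Lemma~\ref{only one}(2), where $D_{j_2}$ is only partially determined, so the filling argument must accommodate an extra degree of freedom; I expect this is absorbed by treating that column's ``free'' boxes as inert spectators in the bijection.
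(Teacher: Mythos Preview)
Your core reduction---that $C$ and $C'$ with $x^C=x^{C'}$ must differ by moving a single value between two columns $j_1,j_2$---is false, and this is not a bookkeeping issue but a structural one. Already in the simplest case (a Type~(R~1) first region, all columns of Type~I), each $C_j$ has the form $[n_j]\setminus\{\ell_j\}$ for some $\ell_j\in\{r,\dots,n_j\}$, and the constraint $x^C=x^{C'}$ only forces the \emph{multisets} $\{\ell_1,\dots,\ell_m\}$ and $\{\ell'_1,\dots,\ell'_m\}$ to agree (this is the paper's Lemma~\ref{milti-4.1}). Any rearrangement of these labels compatible with the bounds $\ell_j\le n_j$ gives a valid $C'$, so $C$ and $C'$ can differ in arbitrarily many columns at once; the paper's running example in Figure~\ref{GJO-2} has nine columns of which seven change. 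There is no ``small active window'', and the obstacle you flag at the end is exactly where the plan breaks.

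Accordingly the paper does not attempt your reduction. It builds an iterative operator $\Phi$ that, in each pass, processes the current first region by a matching algorithm on the labels (largest label first) and, for each matched pair $(j_s,j'_s)$, \emph{swaps all entries lying in the same rows} between $F_{j_s}$ and $F_{j'_s}$. This row-wise swap is the essential move: it automatically preserves $y^F$ (every entry stays in its row) and preserves $\mathrm{inv}(F)$ (the unswapped tail of the longer column consists of values larger than everything being swapped, so no cross-inversions are created or destroyed). For Types~(R~2) and (R~3) a second algorithm then reorders certain entries in a single row $q$ and, in the boundary case $i_k=q-1$, merges leftover columns into the next region before recursing. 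Your proposed operation---extract one value from $F_{j_1}$ and insert it into $F_{j_2}$---is a different move, and for it neither weight nor sign preservation holds without further argument: the weight $y^F$ depends on the (entry, row) pairs, and nothing forces the moving value to sit in the same row of both columns. Note also that Lemmas~\ref{lower than second last}--\ref{only one} constrain $D_{j_2}$, not $C_{j_2}$ or $C'_{j_2}$, so they do not by themselves localize the difference between $C$ and $C'$ as you assume.
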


Without loss of generality, we may  make the following extra  assumptions  on $D$: 
\begin{itemize} 
    \item[] (C 1): each column in $[n]\times [n]$ has at least two crossings.

\item[] (C 2): $D$ has no column of the form $[m]$ where $m\geq 1$; 
\end{itemize}
The reasons for (C 1) and (C 2) have been  explained respectively  in the beginning of
Subsection \ref{subss} and  below Lemma \ref{lower than second last}.

The construct of the bijection  is an iterative procedure, based on an operation, denoted $\Phi$, acting on flagged fillings. 
The operation depends only on $D, C$ and $C'$.
Roughly speaking, for $F\in \mathcal{F}_{D}(C)$, the operation  $\Phi$ interchanges the columns of $F$ by sliding certain entries  along the same rows. 
After applying   $\Phi$,
we get a new flagged filling $\Phi(F)$,  which is ``closer'' to a flagged filling in $\mathcal{F}_{D}(C')$ in the sense that there are more columns whose  entries match  the subsets in $C'$. Then we replace $F$ by a flagged filling, denoted $\hat{\Phi}(F)$, which is obtained from $\Phi(F)$ by ignoring certain columns that have been  adjusted  by $\Phi$, and we continue  to apply   $\Phi$ to $\hat{\Phi}(F)$. Eventually, we arrive at a flagged filling belonging to  $\mathcal{F}_{D}(C')$, which is defined as the image of $F$.

The description of $\Phi$ will be distinguished into three cases, depending on the configuration of the first region of $D$. 
Specifically, according to Lemma \ref{BGTR},  each region of $D$ is one of the following three types: it contains
\begin{itemize} 
    \item[] (R 1):   only Type I columns, or
    
    \item[] (R 2):  Type I columns followed by some  copies of a Type II column, or
    
    \item[]  (R 3): type I columns followed by exactly one  Type III column.
\end{itemize}
In the remaining of this section,  let $F=(F_1,\ldots, F_n)$ be any fixed filling in $\mathcal{F}_D(C)$. Our task is to construct    $\Phi(F)$, as well as $\hat{\Phi}(F)$ obtained from $\Phi(F)$ by ignoring some specific columns. 

\subsection{The first region of $D$ is of Type (R 1)}\label{subset4.1}

In this case, the first region of $D$   contains only  Type I columns.
Suppose that there are $m$ columns in the first region. We may further suppose that  for $1\leq j\leq m$, $D_j$ is not empty (since empty columns can be obviously ignored). Recall that $D$ satisfies the   requirement (C 2) given below Theorem \ref{main-bijection}.
Therefore, for $1\leq j\leq m$, the signature of $D_j$  is positive. As an illustration,  Figure \ref{IRegion-1} lists the columns in the first region. Here we have erased  some rows, lying  on the bottom, which contain only crossings.   
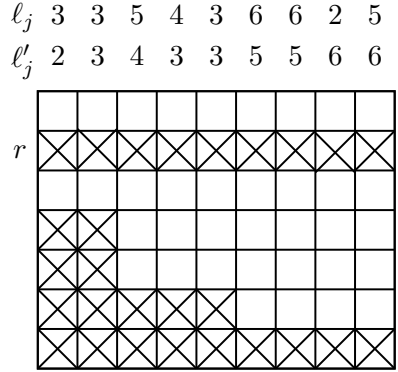
\begin{figure}[h]
    \centering
%    $$
\addtocounter{MaxMatrixCols}{10}
%\begin{matrix}
%    &1&2&3&4&5&6&7&8&9 \\
%     {3}& {3}& {5}& {4}& {3}& {6}& {6}& {2}& {5} \\  %&\downarrow&\downarrow&\downarrow&\downarrow&\downarrow&\downarrow&\downarrow&\downarrow&\downarrow\\
 %       {2} & {3}& {3}& {6}& {5}& {5}& {3}& {4}& {6}
%\end{matrix}
%$$

\tikzset{every picture/.style={line width=0.75pt}} %set default line width to 0.75pt        

\begin{tikzpicture}[x=0.75pt,y=0.75pt,yscale=-1,xscale=1]
%uncomment if require: \path (0,525); %set diagram left start at 0, and has height of 525

%Shape: Grid [id:dp7531503075435968] 
\draw  [draw opacity=0] (52,68) -- (232,68) -- (232,208) -- (52,208) -- cycle ; \draw   (72,68) -- (72,208)(92,68) -- (92,208)(112,68) -- (112,208)(132,68) -- (132,208)(152,68) -- (152,208)(172,68) -- (172,208)(192,68) -- (192,208)(212,68) -- (212,208) ; \draw   (52,88) -- (232,88)(52,108) -- (232,108)(52,128) -- (232,128)(52,148) -- (232,148)(52,168) -- (232,168)(52,188) -- (232,188) ; \draw   (52,68) -- (232,68) -- (232,208) -- (52,208) -- cycle ;
%Straight Lines [id:da08319594545257258] 
\draw    (52,88) -- (72,108) ;
%Straight Lines [id:da41033894089482725] 
\draw    (52,108) -- (72,88) ;

%Straight Lines [id:da35067848882207886] 
\draw    (72,87) -- (92,107) ;
%Straight Lines [id:da31059912172333326] 
\draw    (72,107) -- (92,87) ;

%Straight Lines [id:da5741411175484263] 
\draw    (92,88) -- (112,108) ;
%Straight Lines [id:da5205239128961994] 
\draw    (92,108) -- (112,88) ;

%Straight Lines [id:da014058011322571673] 
\draw    (112,88) -- (132,108) ;
%Straight Lines [id:da07405612974268738] 
\draw    (112,108) -- (132,88) ;

%Straight Lines [id:da5284680594363149] 
\draw    (132,88) -- (152,108) ;
%Straight Lines [id:da1692252594629975] 
\draw    (132,108) -- (152,88) ;

%Straight Lines [id:da2411222951394758] 
\draw    (152,88) -- (172,108) ;
%Straight Lines [id:da5721977729466483] 
\draw    (152,108) -- (172,88) ;

%Straight Lines [id:da19519918661641955] 
\draw    (172,88) -- (192,108) ;
%Straight Lines [id:da1738880628920758] 
\draw    (172,108) -- (192,88) ;

%Straight Lines [id:da9704580898848119] 
\draw    (192,89) -- (212,109) ;
%Straight Lines [id:da8998393666707403] 
\draw    (192,109) -- (212,89) ;

%Straight Lines [id:da22799958494226025] 
\draw    (212,88) -- (232,108) ;
%Straight Lines [id:da7043890246846585] 
\draw    (212,108) -- (232,88) ;
%Straight Lines [id:da7309083730436028] 
\draw    (52,128) -- (72,148) ;
%Straight Lines [id:da7762881078017747] 
\draw    (52,148) -- (72,128) ;

%Straight Lines [id:da38785424296778515] 
\draw    (72,128) -- (92,148) ;
%Straight Lines [id:da5836025957411439] 
\draw    (72,148) -- (92,128) ;

%Straight Lines [id:da6278717011863972] 
\draw    (152,188) -- (172,208) ;
%Straight Lines [id:da275120288305176] 
\draw    (152,208) -- (172,188) ;

%Straight Lines [id:da16212724836144377] 
\draw    (172,188) -- (192,208) ;
%Straight Lines [id:da3205127691094185] 
\draw    (172,208) -- (192,188) ;

%Straight Lines [id:da7145422105878447] 
\draw    (192,188) -- (212,208) ;
%Straight Lines [id:da19852756049081188] 
\draw    (192,208) -- (212,188) ;

%Straight Lines [id:da5267554604606206] 
\draw    (212,188) -- (232,208) ;
%Straight Lines [id:da9223525296417303] 
\draw    (212,208) -- (232,188) ;

%Straight Lines [id:da07056018940401843] 
\draw    (132,168) -- (152,188) ;
%Straight Lines [id:da7053541673754351] 
\draw    (132,188) -- (152,168) ;

%Straight Lines [id:da8065052279119116] 
\draw    (132,188) -- (152,208) ;
%Straight Lines [id:da5281747613698804] 
\draw    (132,208) -- (152,188) ;

%Straight Lines [id:da7144433998282187] 
\draw    (112,168) -- (132,188) ;
%Straight Lines [id:da9262427038723609] 
\draw    (112,188) -- (132,168) ;

%Straight Lines [id:da722670497343733] 
\draw    (112,188) -- (132,208) ;
%Straight Lines [id:da7100913353034524] 
\draw    (112,208) -- (132,188) ;

%Straight Lines [id:da4576856135779155] 
\draw    (92,168) -- (112,188) ;
%Straight Lines [id:da657649896351463] 
\draw    (92,188) -- (112,168) ;

%Straight Lines [id:da1292113130446968] 
\draw    (92,188) -- (112,208) ;
%Straight Lines [id:da994557542331256] 
\draw    (92,208) -- (112,188) ;

%Straight Lines [id:da27397729300090434] 
\draw    (72,168) -- (92,188) ;
%Straight Lines [id:da3636287148860953] 
\draw    (72,188) -- (92,168) ;

%Straight Lines [id:da281565276695094] 
\draw    (72,188) -- (92,208) ;
%Straight Lines [id:da1329409799413439] 
\draw    (72,208) -- (92,188) ;

%Straight Lines [id:da7959837877529401] 
\draw    (52,168) -- (72,188) ;
%Straight Lines [id:da97564687000366] 
\draw    (52,188) -- (72,168) ;

%Straight Lines [id:da13606424482479795] 
\draw    (52,188) -- (72,208) ;
%Straight Lines [id:da501986672828173] 
\draw    (52,208) -- (72,188) ;

%Straight Lines [id:da4888761332334799] 
\draw    (72,148) -- (92,168) ;
%Straight Lines [id:da284499759935936] 
\draw    (72,168) -- (92,148) ;

%Straight Lines [id:da6695156565266835] 
\draw    (52,148) -- (72,168) ;
%Straight Lines [id:da22804094422640708] 
\draw    (52,168) -- (72,148) ;

% Text Node
\draw (-1+38,43) node [anchor=north west][inner sep=0.75pt]  [font=\small] [align=left] {$\ell'_j$};
\draw (-1+58,44) node [anchor=north west][inner sep=0.75pt]  [font=\small] [align=left] {{2}};
% Text Node
\draw (-1+78,44) node [anchor=north west][inner sep=0.75pt]  [font=\small] [align=left] {{3}};
% Text Node
\draw (-1+98,44) node [anchor=north west][inner sep=0.75pt]  [font=\small] [align=left] {{4}};
% Text Node
\draw (-1+118,44) node [anchor=north west][inner sep=0.75pt]  [font=\small] [align=left] {{3}};
% Text Node
\draw (-1+138,44) node [anchor=north west][inner sep=0.75pt]  [font=\small] [align=left] {{3}};
% Text Node
\draw (-1+158,44) node [anchor=north west][inner sep=0.75pt]  [font=\small] [align=left] {{5}};
% Text Node
\draw (-1+178,44) node [anchor=north west][inner sep=0.75pt]  [font=\small] [align=left] {{5}};
% Text Node
\draw (-1+198,44) node [anchor=north west][inner sep=0.75pt]  [font=\small] [align=left] {{6}};
% Text Node
\draw (-1+218,44) node [anchor=north west][inner sep=0.75pt]  [font=\small] [align=left] {{6}};
% Text Node
\draw (-1+38,22) node [anchor=north west][inner sep=0.75pt]  [font=\small] [align=left] {$\ell_j$};
% Text Node
\draw (-1+58,23) node [anchor=north west][inner sep=0.75pt]  [font=\small] [align=left] {{3}};
% Text Node
\draw (-1+78,23) node [anchor=north west][inner sep=0.75pt]  [font=\small] [align=left] {{3}};
% Text Node
\draw (-1+98,23) node [anchor=north west][inner sep=0.75pt]  [font=\small] [align=left] {{5}};
% Text Node
\draw (-1+118,23) node [anchor=north west][inner sep=0.75pt]  [font=\small] [align=left] {{4}};
% Text Node
\draw (-1+138,23) node [anchor=north west][inner sep=0.75pt]  [font=\small] [align=left] {{3}};
% Text Node
\draw (-1+158,23) node [anchor=north west][inner sep=0.75pt]  [font=\small] [align=left] {{6}};
% Text Node
\draw (-1+178,23) node [anchor=north west][inner sep=0.75pt]  [font=\small] [align=left] {{6}};
% Text Node
\draw (-1+198,23) node [anchor=north west][inner sep=0.75pt]  [font=\small] [align=left] {{2}};
% Text Node
\draw (-1+218,23) node [anchor=north west][inner sep=0.75pt]  [font=\small] [align=left] {{5}};
\draw (38,94) node [anchor=north west][inner sep=0.75pt]  [font=\small] [align=left] {$r$};

\end{tikzpicture}
\caption{Columns in a Type (R 1) region and their labels.}\label{IRegion-1}
\end{figure}

To describe the operation $\Phi$, we  label the   $m$ columns   in two ways.  
Suppose that the first crossing in each column is in  row $r$. Let $1\leq j\leq m$.  Assume that  $(n_j, j)$ is the box right above the second crossing in column $j$ (which is the lowest box of $D_j$). Note that $n_1\leq n_2\leq \cdots\leq n_m$. For the region in Figure \ref{IRegion-1}, we have $m=9$ and 
$(n_1,\ldots, n_9)=(3,3,5,5,5,6,6,6,6)$. 
Observe that $C_j=[{n}_j]\setminus \{\ell_j\}$  for some $r\leq  \ell_j \leq {n}_j$, and $C_j'=[{n}_j]\setminus \{\ell_j'\}$ for some  $r\leq  \ell_j' \leq {n}_j$. 
By assigning    $D_j$  with a label {$\ell_j$} or {$\ell_j'$}, we obtain two kinds of labelings for the $m$ columns.  See Figure \ref{IRegion-1} for an illustration of the labelings,   from which we could  recover
$C_j$ and $C'_j$.

\begin{lem}\label{milti-4.1}
As multisets, $\{\ell_1,\ldots, \ell_m\}$  is the same as     $\{\ell_1',\ldots, \ell_m'\}$.
\end{lem}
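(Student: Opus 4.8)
The plan is to compare, variable by variable, the exponents occurring in the two sides of the identity $x^C=x^{C'}$, focusing on the variables $x_r,x_{r+1},\dots,x_{n_m}$; the point is that in this range the columns lying outside the first region contribute the same amounts to $x^{C}$ and to $x^{C'}$, so that $x^C=x^{C'}$ pins down the first-region part. The structural fact I would establish first is: \emph{every column $D_j$ with $j>m$ has its first crossing in a row $r_j\ge n_m$.} By (C\,2) each first-region column has positive signature, so $n_i\ge r+1$ for $1\le i\le m$, and $n_1\le\cdots\le n_m$. If $n_m=r+1$ this is immediate because $D_j$ lies in a later region and so $r_j>r$. If $n_m\ge r+2$, then $D_m=[r-1]\cup\{r+1,\dots,n_m\}$ has second-largest element $n_m-1$, while $D_j\ne[r_j-1]$ by (C\,2); applying Lemma~\ref{lower than second last} to the pair $D_m,D_j$ --- which lie in distinct regions with $m<j$ --- yields $r_j>n_m-1$, i.e.\ $r_j\ge n_m$. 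In particular $D_j\supseteq[r_j-1]\supseteq[n_m-1]$ for every $j>m$, and since $C_j\le D_j$ and $C_j'\le D_j$ this forces $[n_m-1]\subseteq C_j$ and $[n_m-1]\subseteq C_j'$.

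With this in hand I would run the exponent count. Fix $v$ with $r\le v\le n_m-1$. For $1\le j\le m$, since $C_j=[n_j]\setminus\{\ell_j\}$ with $\ell_j\ge r$, the number of such $j$ with $v\in C_j$ is $\#\{j\le m:n_j\ge v\}-\#\{j\le m:\ell_j=v\}$; for $j>m$ the preceding paragraph gives $v\in C_j$ always, so these columns contribute a constant independent of $C$ (the number of columns of index $>m$). Hence the exponent of $x_v$ in the monomial $x^{C}$ equals $\#\{j\le m:n_j\ge v\}-\#\{j\le m:\ell_j=v\}$ plus that constant, and the analogous identity with $\ell_j'$ in place of $\ell_j$ holds for $x^{C'}$. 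Comparing these using $x^C=x^{C'}$ gives
\[
\#\{j\le m:\ell_j=v\}=\#\{j\le m:\ell_j'=v\}\qquad\text{for all }v\text{ with }r\le v\le n_m-1.
\]

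To conclude, observe that $\ell_j,\ell_j'\in[r,n_j]\subseteq[r,n_m]$ for every $j\le m$, so $\sum_{v=r}^{n_m}\#\{j\le m:\ell_j=v\}=m=\sum_{v=r}^{n_m}\#\{j\le m:\ell_j'=v\}$; together with the term-by-term equalities just obtained for $r\le v\le n_m-1$, this forces equality also at $v=n_m$. Therefore the two multisets have the same multiplicity of each value, i.e.\ $\{\ell_1,\dots,\ell_m\}=\{\ell_1',\dots,\ell_m'\}$.

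The step I expect to be the crux is the structural claim that no later column has its first crossing strictly above row $n_m$; this is the only place the multiplicity-free hypothesis is used, and it is what stops the ``mass'' carried by $x_r,\dots,x_{n_m-1}$ from leaking between the first region and the later regions. It rests on Lemma~\ref{lower than second last} (hence on avoiding the configurations $(A)$ and $(F)$ of Figure~\ref{avoid}). A mild subtlety worth flagging is that this forcing only controls the variables $x_v$ with $v\le n_m-1$ --- a later column with $r_j=n_m$ may still contain $n_m$ --- which is precisely why the value $v=n_m$ is recovered at the end from the counting identity $\sum_v\#\{j\le m:\ell_j=v\}=m$ rather than directly from the exponent comparison.
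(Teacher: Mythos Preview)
Your proof is correct and follows essentially the same approach as the paper's. Both arguments hinge on applying Lemma~\ref{lower than second last} to the pair $(D_m,D_j)$ for $j>m$ to deduce that the first crossing in any later column lies in a row $\ge n_m$, so that $[n_m-1]\subseteq C_j$ and $[n_m-1]\subseteq C_j'$; from there both proofs compare the contributions of $x_r,\dots,x_{n_m}$ to $x^C$ and $x^{C'}$. The only cosmetic difference is that the paper works with the product monomial $x_{\ell_1}\cdots x_{\ell_m}$ and recovers the $x_{n_m}$ term by a total-degree argument, whereas you compare exponents of each $x_v$ individually and recover the $v=n_m$ multiplicity from the count $\sum_v\#\{j:\ell_j=v\}=m$; these are the same idea in two packagings.
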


\begin{proof}
For notational simplicity, we denote $p=n_m$.  Write 
\[
x^C=x^{C'}=x_1^{a_1}\cdots x_p^{a_p}\,x_{p+1}^{a_{p+1}}\cdots x_n^{a_n}.
\]
Consider the factor $x_1^{a_1}\cdots x_p^{a_p}$. It can be seen  that \begin{align*}
x_1^{a_1}\cdots x_p^{a_p}&=\frac{\prod_{j=1}^{m}x_1\cdots x_{n_j}}{x_{\ell_1}\cdots x_{\ell_m}}\prod_{j={m+1}}^n\prod_{t\in C_j \atop  1\leq t\leq p} x_t \\
&=\frac{\prod_{j=1}^{m}x_1\cdots x_{n_j}}{x_{\ell_1'}\cdots x_{\ell_m'}}\prod_{j={m+1}}^n\prod_{t\in C_j' \atop  1\leq t\leq p} x_t.
\end{align*}
By Lemma \ref{lower than second last}, for $m+1\leq j\leq n$, the first crossing in column $j$ is strictly lower than row $p-1$, and thus we have $[p-1]\subseteq D_j$, and hence $[p-1]\subseteq C_j$ and $[p-1]\subseteq C_j'$.
Therefore,
\begin{align*}
x_1^{a_1}\cdots x_p^{a_p}&=\frac{\prod_{j=1}^{m}x_1\cdots x_{n_j}}{x_{\ell_1}\cdots x_{\ell_m}}(x_1\cdots x_{p-1})^{n-m}\prod_{j={m+1}}^n\prod_{p\in C_j} x_p \\
&=\frac{\prod_{j=1}^{m}x_1\cdots x_{n_j}}{x_{\ell_1'}\cdots x_{\ell_m'}}(x_1\cdots x_{p-1})^{n-m}\prod_{j={m+1}}^n\prod_{p\in C_j'} x_p.
\end{align*}
From the above, we know  that $\{m+1\leq j\leq n\colon p\in C_j\}$ and $\{m+1\leq j\leq n\colon p\in C_j'\}$ have the same cardinality. So we have $x_{\ell_1}\cdots x_{\ell_m}=x_{\ell_1'}\cdots x_{\ell_m'}$, which justifies   $\{\ell_1,\ldots, \ell_m\}=\{\ell_1',\ldots, \ell_m'\}$ as multisets. 
\end{proof}

The operation $\Phi$  will be performed on the columns of $F$ in the first region, based on a column-interchanging procedure. Let  $1\leq j_{1}<j_{2}\leq m$. Note that if $(i, j_1)$ is  a box of $F_{j_1}$, then $(i, j_2)$ is  a box of $F_{j_2}$. 
We say that  $F_{j_1}$ and $F_{j_2}$ are {\it interchangeable} if the resulting filling  by swapping the entries   in the same rows of $F_{j_1}$ and $F_{j_2}$ (leaving the entries of $F_{j_2}$ below row $n_{j_1}$ unchanged)  is still a  flagged filling. Equivalently, the entries of $F_{j_1}$ have no intersetion with the entries of $F_{j_2}$ that lie    below row $n_{j_1}$.

\begin{lem}\label{lem-4.3pj}
If $\ell_{j_2}\leq n_{j_1}$, then $F_{j_1}$ and $F_{j_2}$ are interchangeable.

\end{lem}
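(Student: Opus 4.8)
The goal is to show that if $\ell_{j_2}\le n_{j_1}$, then the entries of $F_{j_1}$ do not intersect the entries of $F_{j_2}$ lying below row $n_{j_1}$. The key structural facts at our disposal are: $D_{j_1}$ and $D_{j_2}$ are both Type I columns in the same region, so (erasing the trailing all-crossing rows) $D_{j_1}$ occupies precisely rows $[n_{j_1}]\setminus(\text{the first crossing rows above } r)$—more precisely, $D_{j_1}$ consists of the rows from $r$ up to $n_{j_1}$ that are not the few crossing rows at the very top, and there is nothing of $D_{j_1}$ below row $n_{j_1}$; similarly for $D_{j_2}$ up to row $n_{j_2}\ge n_{j_1}$. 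I will first record this precise description of $D_{j_1}$ and $D_{j_2}$, noting in particular that every box of $D_{j_1}$ lies weakly above row $n_{j_1}$, and that the boxes of $D_{j_2}$ weakly above row $n_{j_1}$ occupy exactly the same rows as the boxes of $D_{j_1}$ (since they are in the same region, their first crossing is in the same row $r$, and Type I means no boxes below the second crossing).

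\textbf{Second step: translate the flagged-filling constraint into a counting statement.} Since $F\in\mathcal F_D(C)$, the set of entries appearing in $F_{j_1}$ is exactly $C_{j_1}=[n_{j_1}]\setminus\{\ell_{j_1}\}$, and the entries in $F_{j_2}$ form $C_{j_2}=[n_{j_2}]\setminus\{\ell_{j_2}\}$. Consider the boxes of $F_{j_2}$ weakly above row $n_{j_1}$: by the first step these occupy exactly the rows that the boxes of $D_{j_1}$ occupy. I want to show these particular boxes of $F_{j_2}$ carry precisely the entry set $[n_{j_1}]\setminus\{\ell_{j_2}\}$, using the flag condition (entry in box $(i,j)$ is $\le i$) together with the fact that $\ell_{j_2}\le n_{j_1}$. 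The argument is the standard "column is forced" argument: reading $F_{j_2}$ from the bottom up past the second crossing, the flag bounds force the set of available values; since $\ell_{j_2}\le n_{j_1}$, the value $\ell_{j_2}$ is the unique value in $[n_{j_1}]$ missing from the top block of $F_{j_2}$, and every entry of that top block is $\le n_{j_1}$. Hence the entries of $F_{j_2}$ above row $n_{j_1}$ are exactly $[n_{j_1}]\setminus\{\ell_{j_2}\}$, while the entries of $F_{j_2}$ strictly below row $n_{j_1}$ are a subset of $\{n_{j_1}+1,\dots\}$, in particular all $> n_{j_1}$.

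\textbf{Third step: conclude.} The entries of $F_{j_1}$ form $C_{j_1}=[n_{j_1}]\setminus\{\ell_{j_1}\}\subseteq[n_{j_1}]$, so every entry of $F_{j_1}$ is $\le n_{j_1}$. By the second step, every entry of $F_{j_2}$ lying below row $n_{j_1}$ is $>n_{j_1}$. Therefore the entry sets are disjoint in the required sense, which is exactly the definition of $F_{j_1}$ and $F_{j_2}$ being interchangeable.

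\textbf{Anticipated main obstacle.} The delicate point is the second step: carefully justifying that the entries of $F_{j_2}$ above row $n_{j_1}$ are forced to be exactly $[n_{j_1}]\setminus\{\ell_{j_2}\}$, rather than just contained in $[n_{j_2}]$. This requires combining the flag bound $c_{ij}\le i$ with the shape of the column (how many boxes of $D_{j_2}$ lie above each row) and the hypothesis $\ell_{j_2}\le n_{j_1}$; one must check that no entry $>n_{j_1}$ can "leak" into the top block of $F_{j_2}$, which follows because the top block of $D_{j_2}$ has exactly $|[n_{j_1}]\setminus\{\ell_{j_2}\}|$ boxes and all its entries are distinct and bounded appropriately, forcing equality. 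Everything else is bookkeeping about Type I columns in a single region.
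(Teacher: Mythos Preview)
Your proposal is correct and follows essentially the same approach as the paper's proof. The paper argues the contrapositive side more directly: since $C_{j_2}=[n_{j_2}]\setminus\{\ell_{j_2}\}$ and $\ell_{j_2}\le n_{j_1}$, all of $[n_{j_1}+1,n_{j_2}]$ lies in $C_{j_2}$, and the flag bound forces these values into the boxes of $F_{j_2}$ below row $n_{j_1}$; you reach the same conclusion by first pinning down the top block as $[n_{j_1}]\setminus\{\ell_{j_2}\}$ via the flag condition and a count, which is logically equivalent and perhaps slightly more explicit about why the ``forcing'' works.
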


\begin{proof}
Recall that for $1\leq j\leq m$, $C_j=[n_j]\setminus \{\ell_j\}$.
When $\ell_{j_2}\leq n_{j_1}$,   the elements in the interval  $[n_{j_1}+1, n_{j_2}]$ all belong to $C_{j_2}$, and they occupy the boxes of $F_{j_2}$ below row $n_{j_1}$. So, after the column-interchanging,  the resulting  filling is still  a flagged filling. 
\end{proof}

We can now describe the  construction of  $\Phi(F)$ in the Type (R 1) case.

Step 1. Denote $c=\max\{\ell_1,\ldots, \ell_m\}$. Let $A=\{1\leq j\leq m\colon \ell_j=c\}$,  $B=\{1\leq j\leq m\colon \ell_j'=c\}$ and $S=A\cap B$. 
By Lemma  \ref{milti-4.1}, we have $|A|=|B|$.
Assume that $A\setminus S=\{j_1<\cdots<j_t\}$ and $B\setminus S=\{j_1'<\cdots<j_t'\}$. 
For every $1\leq s\leq t$, swap the entries in the same rows of $F_{j_s}$ and $F_{j_s'}$ (clearly, this is independent of the order of $s$).

We  explain that   after Step 1, the resulting filling is also  a flagged filling. Consider  two cases: $j_s>j_s'$ and $j_s<j_s'$. In the former case, we have $\ell_{j_s}=c=\ell'_{j'_{s}}\leq n_{j'_{s}}$, and in the latter  case, we have $\ell_{j_s'}<c=\ell_{j_s}\leq n_{j_s}$. Both cases satisfy the requirement in 
 Lemma \ref{lem-4.3pj}, and thus $F_{j_s}$ and $F_{j_s'}$  are interchangeable. 

Step 2. After Step 1,  
the columns  labeled with $c$ are indexed by the set $B$. 
Ignore such columns, and then return to   Step 1 where the operation  is applied to  the remaining columns  in the first region. 
 
The above algorithm eventually  terminates. Define $\Phi(F)$ to be the resulting flagged filling.  Figure \ref{GJO-2} gives an illustration of  the construction of $\Phi$.
Note that for  $1\leq j\leq m$, the label of the $j$-th column in $\Phi(F)$  is equal to $\ell_j'$.

\begin{figure}[h]
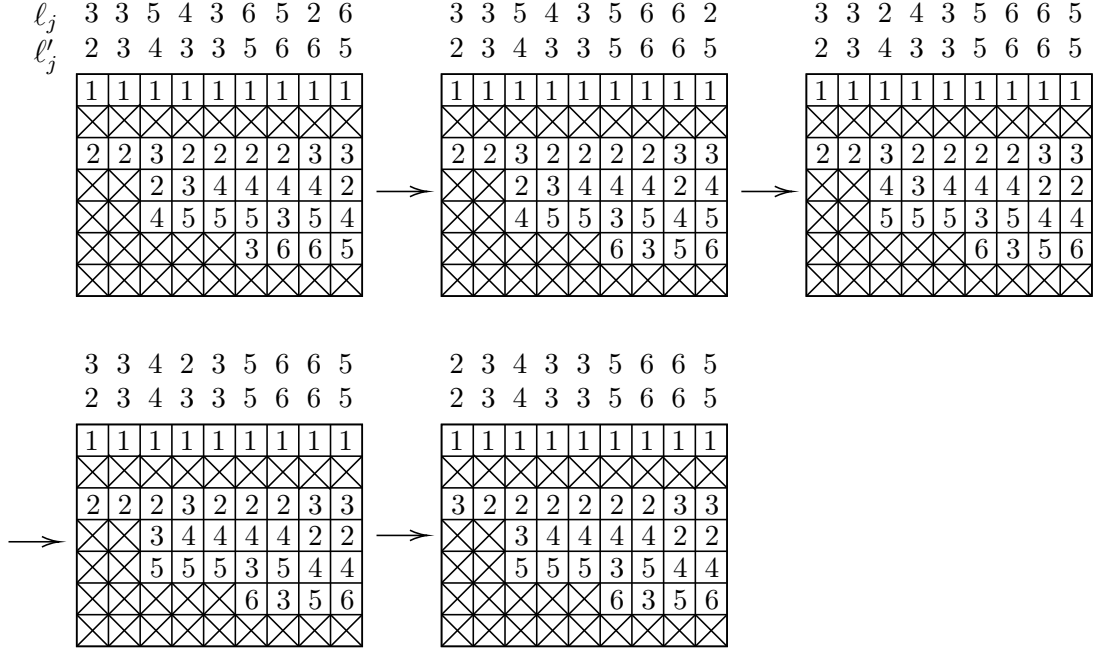

    \centering

\tikzset{every picture/.style={line width=0.6pt}} %set default line width to 0.6pt        

% [inline block 1: 1 envs, 53622 chars -> data_tex | \begin{tikzpicture}[x=0.6pt,y=0.6pt,yscale=-1,xscale=1] %uncomment if require: \path (0,678); %set diagram left start at...]

   \caption{An illustration of the construction of $\Phi$ in the Type (R 1) case.}\label{GJO-2}
\end{figure}

Assume that $\Phi(F)=(F'_1,\ldots, F_n')$. For $1\leq j\leq m$, the set of entries in $F_j'$ is equal to  $C_j'$, while for $m<j\leq n$, we have $F_j'=F_j$. This means that  $\Phi(F)$ is ``closer'' to a filling in $\mathcal{F}_D(C')$. 
We set $\hat{\Phi}(F)$ to be the flagged filling obtained from $\Phi(F)$ by ignoring the columns in the first region. 

\subsection{The first region of $D$ is of Type (R 2)}\label{sub4.2-1}

Still,  let $(D_1,\ldots, D_m)$ be the columns of $D$ in the first region. As in Subsection \ref{subset4.1}, we assume that each $D_j$ for $1\leq j\leq m$ is not empty. We also adopt the notation $n_j$ which means that $(n_j,j)$ is the box right above the second crossing in  column $j$. Note  that  when $D_j$ is a column of Type II,   $(n_j,j)$ is not the lowest box of $D_j$. 

Imitating Subsection \ref{subset4.1}, we label each column $D_j$ for $1\leq j\leq m$ in two ways, also denoted by $\ell_j$ and $\ell_j'$. Let us first define $\ell_j$. 
Consider $[n_j]\setminus C_j$. If $D_j$ is of Type I, then $[n_j]\setminus C_j$ contains exactly a single element, just as  in Subsection \ref{subset4.1}. While, if $D_j$ is of Type II, then $[n_j]\setminus C_j$ either contains a single element or is the empty set. Clearly, $[n_j]\setminus C_j=\emptyset$ if and only if $C_j=[n_j]$. When $[n_j]\setminus C_j$   contains a single element,  let $\ell_j$ be this element, and when $[n_j]\setminus C_j=\emptyset$,  let $\ell_j=\emptyset$.   In completely the same manner, we  may define $\ell'_j$   with respect to  $C'_j$. Figure  \ref{YTRO-23} illustrates the columns, as well as their labels, in the first (Type (R 2)) region of a filling $F$ in $\mathcal{F}_{D}(C)$. 
\begin{figure}[h]
    \centering
    \tikzset{every picture/.style={line width=0.5pt}} %set default line width to 0.5pt        

% [inline block 2: 1 envs, 23113 chars -> data_tex | \begin{tikzpicture}[x=0.6pt,y=0.6pt,yscale=-1,xscale=1] %uncomment if require: \path (0,585); %set diagram left start at...]

\caption{An illustration of columns  and their labels in a Type (R 2) region.}
    \label{YTRO-23}
\end{figure}

\begin{lem}\label{lem-4.4-1}
    As multisets, 
    $\{\ell_1,\ldots, \ell_m\}$ is the same as $\{\ell_1',\ldots, \ell_m'\}$.
\end{lem}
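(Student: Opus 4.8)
The plan is to follow the template of the proof of Lemma \ref{milti-4.1}: I will compare, for each index $i$ not exceeding $p := n_m$, the exponent of $x_i$ on the two sides of the identity $x^C = x^{C'}$. Since the signatures of the columns in the region are weakly increasing and the Type II columns have the maximum signature (Lemma \ref{BGTR}(2)), every Type II column has the same $n_j$, and in particular the last column $D_m$ is of Type II with $n_m = p$. Throughout, $r$ denotes the common row of the first crossing of the region.

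First I would pin down the shape of $C_j$ for $1 \le j \le m$. If $D_j$ is of Type I, then (using assumption (C 2) to exclude a zero signature) $D_j = [n_j]\setminus\{r\}$, so $C_j = [n_j]\setminus\{\ell_j\}$ with $r \le \ell_j \le n_j$, exactly as in Subsection \ref{subset4.1}. If $D_j$ is of Type II, then $D_j = ([n_j]\setminus\{r\})\cup\{t\}$ for the row $t$ of the unique box below the second crossing; a short Gale-order check shows that $C_j$ has at most one element exceeding $n_j$, so either $C_j = [n_j]$ (the case $\ell_j = \emptyset$) or $C_j = ([n_j]\setminus\{\ell_j\})\cup\{u_j\}$ with $r \le \ell_j \le n_j$ and $n_j < u_j \le t$. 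The point to extract is that, in every case, for an index $i \le p$ one has $i \in C_j$ if and only if $i \le n_j$ and $i \ne \ell_j$ (reading ``$i \ne \emptyset$'' as always true), because the extra element $u_j$, when it exists, satisfies $u_j > n_j = p$.

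Next I would treat the columns with index $j > m$. The elements of $D_m = ([p]\setminus\{r\})\cup\{t\}$ in increasing order end in $\dots, p, t$, so the second-largest element of $D_m$ equals $p$; hence Lemma \ref{lower than second last} (whose hypothesis $D_j \ne [i_2-1]$ is guaranteed by (C 2)) shows that for each $j > m$ the first crossing of column $j$ lies strictly below row $p$. Therefore $[p] \subseteq D_j$, and since $C_j \le D_j$ in the Gale order this forces $[p] \subseteq C_j$, and likewise $[p] \subseteq C'_j$. Consequently each column with index $> m$ contributes exactly $1$ to the exponent of $x_i$ in both $x^C$ and $x^{C'}$, for every $i \le p$.

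Combining these, for each $i \in [p]$ the exponent of $x_i$ in $x^C$ equals $\#\{1\le j\le m : n_j \ge i\} - \#\{1\le j\le m : \ell_j = i\} + (n-m)$, and the analogous formula with $\ell_j$ replaced by $\ell_j'$ computes the exponent of $x_i$ in $x^{C'}$. Since $x^C = x^{C'}$ and the terms $\#\{1\le j\le m : n_j \ge i\}$ and $n-m$ do not involve the labels, we get $\#\{1\le j\le m : \ell_j = i\} = \#\{1\le j\le m : \ell_j' = i\}$ for every $i \in [p]$. Because $\ell_j \le n_j \le p$ whenever $\ell_j \ne \emptyset$, both multisets $\{\ell_1,\ldots,\ell_m\}$ and $\{\ell_1',\ldots,\ell_m'\}$ are supported on $\{\emptyset\}\cup[p]$ and have $m$ elements; agreeing on every element of $[p]$, they must also carry the same number of copies of $\emptyset$, hence coincide. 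The only step requiring genuine care is the Gale-order bookkeeping of the second paragraph, which determines the admissible shapes of $C_j$ for a Type II column and verifies that its extra element $u_j$, when present, always lies below row $p$; once that is in place the exponent comparison is the same routine as in Lemma \ref{milti-4.1}.
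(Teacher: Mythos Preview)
Your proof is correct and follows essentially the same approach as the paper's: both compare the contribution of each $x_i$ with $i\le p=n_m$ to $x^C$ and $x^{C'}$, using Lemma~\ref{lower than second last} to show that columns beyond the first region contribute a full $[p]$ on both sides, and then cancel to obtain $x_{\ell_1}\cdots x_{\ell_m}=x_{\ell_1'}\cdots x_{\ell_m'}$ (equivalently, your exponent identity). Your write-up is somewhat more explicit than the paper's in justifying, via the Gale order, that for a Type~II column the optional extra element $u_j$ of $C_j$ always exceeds $p$, which is exactly what makes the exponent formula for $i\le p$ go through; the paper leaves this implicit.
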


\begin{proof}
 The proof is similar to (actually slightly easier than) that of Lemma  \ref{milti-4.1}.  Let $p=n_m$, and write 
$x^C=x^{C'}=x_1^{a_1}\cdots x_p^{a_p}\,x_{p+1}^{a_{p+1}}\cdots x_n^{a_n}.$ By Lemma \ref{lower than second last}, for $m+1\leq j\leq n$, the first crossing in column $j$ is lower than row $p$, which implies   $[p]\subseteq D_j$ and in turn  $[p]\subseteq C_j$ and $[p]\subseteq C_j'$.
Now we see that
\begin{align*}
x_1^{a_1}\cdots x_p^{a_p}=\frac{\prod_{j=1}^{m}x_1\cdots x_{n_j}}{x_{\ell_1}\cdots x_{\ell_m}}(x_1\cdots x_{p})^{n-m}
=\frac{\prod_{j=1}^{m}x_1\cdots x_{n_j}}{x_{\ell_1'}\cdots x_{\ell_m'}}(x_1\cdots x_{p})^{n-m},
\end{align*}
where we set $x_{\emptyset}=1$.
It follows  that $x_{\ell_1}\cdots x_{\ell_m}=x_{\ell_1'}\cdots x_{\ell_m'}$, and so  $\{\ell_1,\ldots, \ell_m\}=\{\ell_1',\ldots, \ell_m'\}$ as multisets.
\end{proof}

Unlike  Subsection \ref{subset4.1}, the construction of $\Phi(F)$ in the Type (R 2) case requires two algorithms. The  first algorithm acts on the columns in the first region   in the  same way as the algorithm in Subsection \ref{subset4.1}.   Here  the label     $\emptyset$ is regarded  as the infinity  which is  greater than any positive integer.  
This procedure   will be best understood via an example in Figure \ref{fig:enter-label1}. 
 
\begin{figure}[!h]
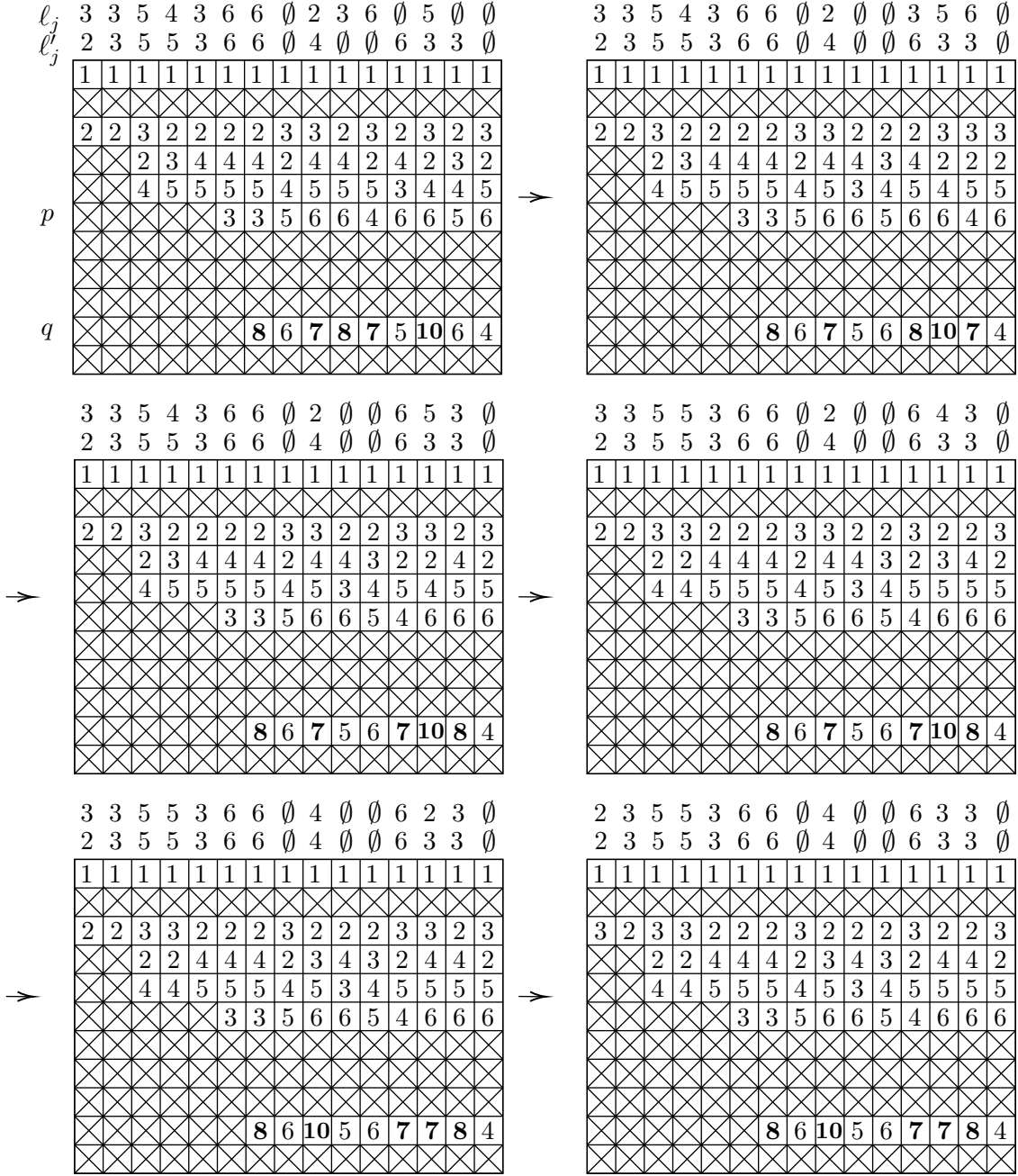

    \centering
    
\tikzset{every picture/.style={line width=0.5pt}} %set default line width to 0.5pt        

% [inline block 3: 1 envs, 157170 chars -> data_tex | \begin{tikzpicture}[x=0.6pt,y=0.6pt,yscale=-1,xscale=1] %uncomment if require: \path (0,812); %set diagram left start at...]

    \caption{Illustration of the first algorithm in the Type (R 2) case.}
    \label{fig:enter-label1}
\end{figure}

After applying  the above algorithm, the label of $D_j$ for $1\leq j\leq m$ becomes $\ell_j'$.
Denote by $F^{(1)}$ the resulting flagged filling. Suppose that $F^{(1)}=(F^{(1)}_1,\ldots, F^{(1)}_n)$ belongs  to $\mathcal{F}_{D}(C^{(1)})$ where $C^{(1)}=(C^{(1)}_1,\ldots, C^{(1)}_n)$.  Clearly, we have $F^{(1)}_j=F_j$ for $m< j\leq n$. For  $1\leq j\leq m$,  we have $C_j^{(1)}=C_j'$ when $D_j$ is either a Type I column  or  a Type II column  labeled with $\emptyset$. 

We still need to consider the type II columns in the frist region whose  labels are not $\emptyset$. Let  $p=n_m$. By (2) in Lemma \ref{BGTR},   the lowest boxes in the Type II columns  lie in the same row, say row $q$. Our next task is to adjust the entries in row $q$ that are greater than $p$. Note that  such entries lie in  the Type II columns whose labels are not $\emptyset$. In Figure \ref{fig:enter-label1}, the entries greater than $p$ lying in row $q$ are displayed   in boldface, where $p=6$ and $q=10$. 

To deal with the entries in row $q$ that are greater than $p$, we need additionally to invoke the regions (after the first region)   whose first crossings lying in or above row $q-1$. According to  Lemma \ref{lower than second last}, the first crossing in each such region is  below row $p$.
Suppose that there are   $k-1$ ($k\geq 1$) such  regions. By Lemma \ref{only one}, each column in these regions contains a box of $D$ in row $q$. Together with the first region, we next  consider  the first $k$ regions of $D$. Assume that the first $k$ regions have a total of  $m'$ columns. 
For $1\leq r\leq k$, assume that the first crossing in the $r$-th region  is in row $i_r$.  Note that $1\leq i_1<\cdots<i_k\leq q-1$

The description of the second algorithm will be divided into     two cases, according to the relative values of $i_k$ and $q-1$. 

Case 1. $i_k<q-1$.
By (1) in Lemma \ref{only one}, for $m<j\leq m'$,   $D_j$ has exactly one box 
below the first  crossing which lies in row $q$. The second algorithm acts on the entries   in the   row $q$  of $F^{(1)}$, restricted to  the first $k$ regions, that are greater than $p$.  
Suppose that there are $d$ such entries, say $(a_1,\ldots, a_d)$ listed from left to right, and  that for $1\leq t\leq d$, $a_t$ lies in column $j_t$. An illustration  is given in  Figure \ref{alg2}, where we set  $k=3$ and $d=10$, and the entries in $(a_1,\ldots, a_{10})$ are drawn in boldface.
\begin{figure}[h]
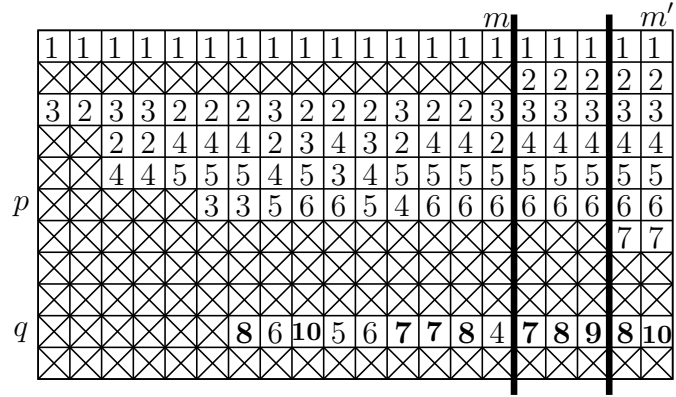

 \centering
\tikzset{every picture/.style={line width=0.6pt}} %set default line width to 0.6pt        

% [inline block 4: 1 envs, 27935 chars -> data_tex | \begin{tikzpicture}[x=0.6pt,y=0.6pt,yscale=-1,xscale=1] %uncomment if require: \path (0,812); %set diagram left start at...]

\caption{An illustration for   Case 1 in the second algorithm.}
\label{alg2}
\end{figure}

Notice that for $1\leq t\leq d$,  $a_t$ can be obviously  read off from $C^{(1)}_{j_t}$.
To be precise, suppose that  $D_{j_t}$ is in the $r$-th region. For $r=1$, $a_t$ is the unique element in $C^{(1)}_{j_t}$ that is greater than $p$, while for $2\leq r\leq k$, $a_t$ is the unique element in $C^{(1)}_{j_t}=C_{j_t}$ that is greater than or equal to $i_r$. We  define  $b_t$ by replacing  $C^{(1)}_{j_t}$ with $C'_{j_t}$.

\begin{lem}
As multisets, we have  $\{a_1,\ldots, a_d\}=\{b_1,\ldots, b_d\}$. 
\end{lem}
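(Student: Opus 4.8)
The plan is to deduce the claimed multiset identity from the weight equality, exactly in the spirit of the proofs of Lemmas \ref{milti-4.1} and \ref{lem-4.4-1}. First I would record that the first algorithm only interchanges entire column-segments lying in fixed rows, hence preserves the multiset of all entries; in particular $x^{C^{(1)}}=x^{C}=x^{C'}$, and moreover row $q$ of $F^{(1)}$ coincides with row $q$ of $F$. So $\{a_1,\dots,a_d\}$ is just the multiset of those entries of $F$ lying in row $q$, within the first $k$ regions, that exceed $p$, and $\{b_1,\dots,b_d\}$ is the analogous multiset defined from $C'$ in place of $C^{(1)}$; all these values lie in the interval $(p,q]$.

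Next I would pin down the shape of the relevant columns. Since the first region is of Type (R 2) it contains a Type II column $D_{j_0}$, and, using that its first crossing is in row $r$, that its signature is positive, and that by Lemma \ref{BGTR}(2) all Type II columns of the first region have their lowest box in the common row $q$, one gets $D_{j_0}=([p]\setminus\{r\})\cup\{q\}$ with $|D_{j_0}|=p=n_m$; consequently the Gale inequality $C_{j_0}\le D_{j_0}$ forces $C_{j_0}\supseteq[r-1]$ and allows at most one element of $C_{j_0}$ to exceed $p$, that element being the entry $c_{j_0}\in(p,q]$ occupying the box in row $q$ (while the Type I columns of the first region have no box in row $q$). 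For a region $\rho$ with $2\le\rho\le k$, applying Lemma \ref{lower than second last} to the pair $(D_{j_0},D_j)$ (with $D_j$ not a standard interval, by (C 2)) gives $i_\rho>p$, and then Lemma \ref{only one}(1) — valid in Case 1 because $i_\rho\le i_k<q-1$ — gives $D_j=[i_\rho-1]\cup\{q\}$; hence $C_j=[i_\rho-1]\cup\{e_j\}$ with $e_j\in[i_\rho,q]\subseteq(p,q]$ sitting in the row-$q$ box, and the part of $C_j$ exceeding $p$ is $\{p+1,\dots,i_\rho-1\}\uplus\{e_j\}$ whose ``bulk'' piece depends only on $D$. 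Finally, any column $D_j$ in a region beyond the $k$-th has first crossing in some row $\ge q$, so $D_j\supseteq[q-1]$, and then $C_j\le D_j$ forces $C_j\supseteq[q-1]$, and likewise $C'_j\supseteq[q-1]$.

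With these structural facts I would compare the exponent of $x_s$ in $x^{C}$ and in $x^{C'}$ for each $s$ with $p<s\le q-1$. For such $s$ the Type I columns of region $1$ contribute nothing; the region-$1$ Type II columns contribute $\#\{j:c_j=s\}$; each region $\rho$ with $2\le\rho\le k$ contributes $\#\{j\in\rho:e_j=s\}$ plus the $C$-independent term $|\rho|\cdot\mathbf{1}[\,s\le i_\rho-1\,]$; and every column in a region beyond the $k$-th contributes $1$, since it contains $[q-1]\ni s$, again independently of $C$. Cancelling the $C$-independent contributions from $[x_s]x^{C}=[x_s]x^{C'}$ yields $\#\{t:a_t=s\}=\#\{t:b_t=s\}$ for all $s\in(p,q-1]$. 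Since $\{a_t\}$ and $\{b_t\}$ are multisets of the same size $d$, both contained in $(p,q]$ and agreeing on $(p,q-1]$, they must also agree on $\{q\}$, and the lemma follows.

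I expect the main obstacle to be the rigidity step of the second paragraph: showing that in Case 1 the intermediate columns (regions $2,\dots,k$) are exactly $[i_\rho-1]\cup\{q\}$ and that the columns of all later regions contain $[q-1]$, so that the only entries in row $q$ that are free to vary are the ``large'' entries of columns in the first $k$ regions. That is what makes every $C$-independent term cancel, and it requires combining Lemmas \ref{lower than second last} and \ref{only one} with the normalization conventions and the no-standard-interval assumption (C 2). A secondary subtlety — that columns in regions beyond the $k$-th could a priori contribute to $[x_q]$ in a $C$-dependent way — is sidestepped by the size-$d$ counting argument above rather than by a direct analysis of the $x_q$-exponent.
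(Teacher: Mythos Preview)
Your proposal is correct and follows the same approach as the paper, which simply notes that columns beyond the $k$-th region have their first crossing in or below row~$q$ (hence contain $[q-1]$) and then defers to the weight-comparison argument of Lemma~\ref{milti-4.1}. Your write-up fills in exactly those details, including the size-$d$ counting trick that handles the $x_q$ coefficient; the only quibble is that the first algorithm may permute row-$q$ entries among Type~II columns of region~1, so ``row~$q$ of $F^{(1)}$ coincides with row~$q$ of~$F$'' should be read as a multiset equality on the first $k$ regions --- which is precisely what your argument uses.
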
 

\begin{proof}
Note that the first crossing in any column of $D$ after the $k$-th region lie in or  below   row $q$. The proof is then done by using  completely similar arguments to  those for Lemma \ref{milti-4.1}.
\end{proof}

Define $\Phi(F)$ to be the flagged filling obtained from  $F^{(1)}$ by replacing  the entry $a_t$  with $b_t$ for  $1\leq t\leq d$. Write $\Phi(F)=(F_1',\ldots, F_n')$. Then, for $1\leq j\leq m'$, the set of entries in $F_j'$ is equal to $C_j'$, while for $m'<j\leq n$, we have $F_j'=F_j$. Let $\hat{\Phi}(F)$ be obtained from $\Phi(F)$ by ignoring the first $m'$ columns.

Case 2. $i_k=q-1$.  
By (2) in Lemma \ref{only one},   each column of $D$ in the $k$-th region has  a box 
 in row $q$. 
Focus on the columns  of $F^{(1)}$ in the first $k-1$ regions (columns in the $k$-th region will be kept unchanged). We consider the entries in the $q$-th row. In the same way as in Case 1, we may define the sequences  $(a_1,\ldots, a_d)$ and $(b_1,\ldots, b_d)$.  See Figure \ref{okp-1} for an illustration, where $k=4$, and the entries  in the sequence $(a_1,\ldots, a_d)$ are shown in boldface. 
\begin{figure}[h]
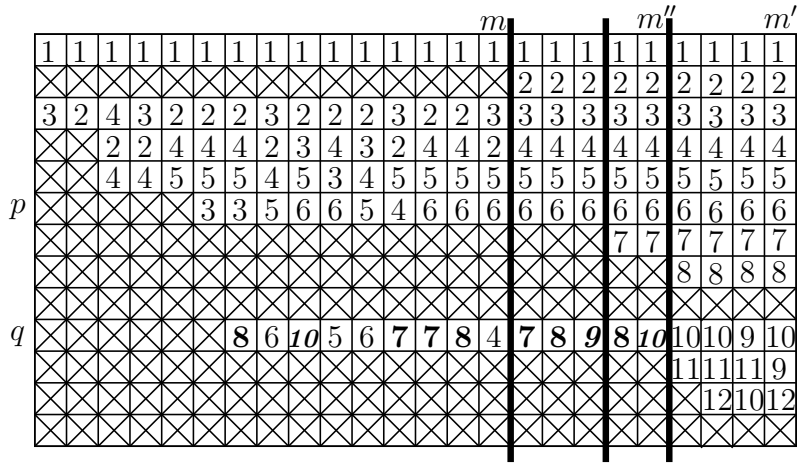

    \centering

\tikzset{every picture/.style={line width=0.6pt}} %set default line width to 0.6pt        

% [inline block 5: 1 envs, 39713 chars -> data_tex | \begin{tikzpicture}[x=0.6pt,y=0.6pt,yscale=-1,xscale=1] %uncomment if require: \path (0,812); %set diagram left start at...]

\caption{An illustration for Case 2 in the second algorithm.}
\label{okp-1}

\end{figure}
In this figure, we use italics to distinguish the entries equal to $q-1$ or $q$, and the reason for this will be clear later.

\begin{lem}\label{huah-1}
The multiset obtained from   $\{a_1,\ldots, a_d\}$ by removing the elements equal to $q-1$ or $q$  is the same as 
the multiset  obtained from   $\{b_1,\ldots, b_d\}$ by removing the elements equal to $q-1$ or $q$.
\end{lem}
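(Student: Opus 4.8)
The plan is to mimic the proof of Lemma~\ref{milti-4.1} (or, more precisely, its streamlined variant Lemma~\ref{lem-4.4-1}) but carried out only ``up to row $q$'', so that the contributions from entries equal to $q-1$ or $q$ are quotiented out automatically. First I would set $p=n_m$ as before, and observe (via Lemma~\ref{lower than second last}) that every column of $D$ lying in the $r$-th region with $2\le r\le k-1$ has its first crossing strictly below row $p$, so that $[p]\subseteq D_j$, hence $[p]\subseteq C_j$ and $[p]\subseteq C_j'$, for all such columns; moreover, since $i_k=q-1$, Lemma~\ref{only one} forces every column of $D$ in the first $k-1$ regions to contain a box in row $q$, and the columns in regions $2,\dots,k-1$ contain exactly one box at or below their first crossing, namely the box in row $q$. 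The point of restricting to the first $k-1$ regions (leaving the $k$-th region frozen, as the construction dictates) is that the entries $(a_1,\dots,a_d)$ in row $q$ greater than $p$ are precisely the row-$q$ entries of those columns whose associated label records a ``displacement'' above $p$.

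Next I would extract the relevant bookkeeping identity. Writing $x^C=x^{C'}=x_1^{a_1}\cdots x_n^{a_n}$ (reusing the notation $a_i$ for exponents only inside this computation, or renaming to avoid the clash with $a_t$), I would isolate the factor $x_1^{e_1}\cdots x_q^{e_q}$ coming from columns in the first $k-1$ regions. By the argument above, each Type~I column $D_j$ in the first region contributes $x_1\cdots x_{n_j}/x_{\ell_j}$; each Type~II column in the first region contributes $x_1\cdots x_{n_j}$ possibly divided by $x_{\ell_j}$ (with $x_\emptyset=1$), together with its row-$q$ entry $a_t$; and each column in regions $2,\dots,k-1$ contributes $x_1\cdots x_{q-1}$ (from the full interval $[q-1]$ forced by Lemma~\ref{only one} together with $i_k=q-1$, which gives $i_r\le q-2$ hence $[q-1]\subseteq D_j$) times $x_{a_t}$ where $a_t\ge i_r$ is its row-$q$ entry. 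Comparing the $C$ and $C'$ expressions, and cancelling the common $\prod_{j}(x_1\cdots x_{n_j})$, $(x_1\cdots x_{q-1})^{(\text{number of columns in regions }2..k-1)}$, and $(x_1\cdots x_p)^{(\text{number of columns after the first }k\text{ regions, by Lemma~\ref{lower than second last}})}$ contributions, one is left with
\[
\Bigl(\textstyle\prod_{t=1}^{d} x_{a_t}\Bigr)\cdot(\text{extra }x_{q-1},x_q\text{ factors from }C)
=\Bigl(\textstyle\prod_{t=1}^{d} x_{b_t}\Bigr)\cdot(\text{extra }x_{q-1},x_q\text{ factors from }C'),
\]
where the ``extra'' factors account for the row-$q$ boxes whose entry happens to equal $q-1$ or $q$ (these are exactly the entries we italicised in Figure~\ref{okp-1}) and, potentially, for differences in how many columns of regions $2,\dots,k-1$ have row-$q$ entry strictly above $p$ versus equal to one of $\{q-1,q\}$. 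Dividing both sides by all powers of $x_{q-1}$ and $x_q$ then yields the claimed equality of the two reduced multisets.

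The one genuine subtlety — and what I expect to be the main obstacle — is verifying that dividing out by $x_{q-1}$ and $x_q$ is \emph{legitimate}, i.e.\ that the total multiplicity of $x_{q-1}$ (and of $x_q$) is the same on both sides before we throw them away. For $x_q$ this is automatic: every column of $D$ in the first $k-1$ regions contains the box $(q,j)$, and in a flagged filling the entry there is some value $\le q$; the \emph{number} of columns is fixed, and columns after the $k$-th region have first crossing at or below row $q$ by Lemma~\ref{lower than second last}, contributing a fixed count of $x_q$'s determined by $D$ alone (not by $C$ versus $C'$), so the exponent of $x_q$ in $x^C$ forces the count of row-$q$ entries equal to $q$ to agree. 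The same counting argument, applied to row $q-1$ (using that each relevant column contains the box $(q-1,j)$, since $i_k=q-1$ and the regions $2,\dots,k-1$ have $[q-1]\subseteq D_j$), pins down the number of row-$q$ entries equal to $q-1$. Once these two counts are matched, the reduced multisets $\{a_t : a_t\notin\{q-1,q\}\}$ and $\{b_t : b_t\notin\{q-1,q\}\}$ must coincide, because multiset equality of the full $x$-monomials combined with equality of the $x_{q-1}$- and $x_q$-multiplicities leaves no other freedom. I would present this as a short self-contained computation, explicitly invoking Lemma~\ref{lower than second last} and Lemma~\ref{only one} at the two places indicated, and remark that the argument is ``slightly easier than'' Lemma~\ref{milti-4.1} in the same spirit as the proof of Lemma~\ref{lem-4.4-1}.
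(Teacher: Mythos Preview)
Your overall strategy is exactly the one the paper intends: the paper simply says the proof is ``analogous to that for Lemma~\ref{lem-4.4-1}'' and omits it, and your second paragraph carries this out correctly. Once you have the displayed identity
\[
\Bigl(\prod_{t=1}^{d} x_{a_t}\Bigr)\cdot E_C \;=\; \Bigl(\prod_{t=1}^{d} x_{b_t}\Bigr)\cdot E_{C'}
\]
with $E_C,E_{C'}$ monomials in $x_{q-1},x_q$ only, you are done: compare exponents of $x_v$ for each $v\notin\{q-1,q\}$ (equivalently, specialise $x_{q-1}=x_q=1$) and read off that $\#\{t:a_t=v\}=\#\{t:b_t=v\}$. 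No further work is required.

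Your third paragraph, however, is both unnecessary and wrong. You attempt to show that the $x_{q-1}$- and $x_q$-multiplicities among the $a_t$'s and $b_t$'s themselves agree, but this is precisely what \emph{fails}: the paper states immediately after the lemma that the sub-multisets of $\{a_t\}$ and $\{b_t\}$ consisting of the values $q-1$ and $q$ need not coincide, ``because of the existence of the $k$-th region.'' Your counting argument overlooks exactly this: you account for columns \emph{after} the $k$-th region but never for the $k$-th region itself, and there the contribution of $x_{q-1}$ and $x_q$ to $x^C$ genuinely depends on $C$ versus $C'$ (since $i_k=q-1$, one has $[q-2]\subseteq C_j$ but whether $q-1$ or $q$ lies in $C_j$ is not determined by $D_j$ alone). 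The same issue can arise for columns whose first crossing is exactly in row $q$. So the ``main obstacle'' you anticipate is a phantom: there is nothing to verify about the $q-1,q$ counts, and the claim you try to establish there is in general false. Simply delete the last paragraph and end the argument after restricting to $v\le q-2$.
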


\begin{proof}
The proof is the analogous to that for  Lemma  \ref{lem-4.4-1}, and so is omitted. 
\end{proof}

It should be pointed out  that the subest of $\{a_1,\ldots, a_d\}$ consisting of  $q-1$ and $q$ is not necessarily equal to the subest of $\{b_1,\ldots, b_d\}$ consisting of  $q-1$ and $q$, because of the existence of the $k$-th region (namely, the region with   topmost crossings in row $q-1$). 

%Denote by $A$ the subset of $\{a_1,\ldots, a_d\}$ by erasing the elements  $q-1$ and $q$. 

We construct $(a_1',\ldots, a_d')$ by reordering  the elements of  $(a_1,\ldots, a_d)$ as follows. First, shuffle the subword including elements not equal to $q-1$ or $q$ and the subword including  $q-1$ and $q$,  such that the elements not equal to  $q-1$ or $q$ occupy the same positions as in  $(b_1,\ldots, b_d)$ (this is well defined according to Lemma \ref{huah-1}). Then, rearrange the elements not equal to $q-1$ or $q$ such that they have the same order as in  $(b_1,\ldots, b_d)$. 
For example, assuming $(b_1,\ldots, b_d)=(7,8,9,8,7,9,8,10,7,8)$, the sequence   $(a_1,\ldots, a_d)$ in Figure \ref{okp-1} will be  changed into $(a_1',\ldots, a_d')$ as illustrated  in  Figure \ref{iunf-323}.
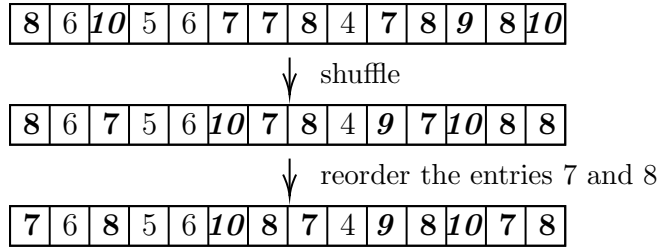
\begin{figure}[h]
    \centering

\tikzset{every picture/.style={line width=0.75pt}} %set default line width to 0.75pt        

\begin{tikzpicture}[x=0.75pt,y=0.75pt,yscale=-1,xscale=1]
%uncomment if require: \path (0,525); %set diagram left start at 0, and has height of 525

%Shape: Grid [id:dp14588193434394037] 
\draw  [draw opacity=0] (0,0) -- (280,0) -- (280,20) -- (0,20) -- cycle ; \draw   (20,0) -- (20,20)(40,0) -- (40,20)(60,0) -- (60,20)(80,0) -- (80,20)(100,0) -- (100,20)(120,0) -- (120,20)(140,0) -- (140,20)(160,0) -- (160,20)(180,0) -- (180,20)(200,0) -- (200,20)(220,0) -- (220,20)(240,0) -- (240,20)(260,0) -- (260,20) ; \draw    ; \draw   (0,0) -- (280,0) -- (280,20) -- (0,20) -- cycle ;
%Straight Lines [id:da7671978676595603] 
\draw    (141,27) -- (141,44) ;
\draw [shift={(141,46)}, rotate = 270] [color={rgb, 255:red, 0; green, 0; blue, 0 }  ][line width=0.75]    (10.93,-3.29) .. controls (6.95,-1.4) and (3.31,-0.3) .. (0,0) .. controls (3.31,0.3) and (6.95,1.4) .. (10.93,3.29)   ;
%Shape: Grid [id:dp5915130050156308] 
\draw  [draw opacity=0] (0,51) -- (280,51) -- (280,71) -- (0,71) -- cycle ; \draw   (20,51) -- (20,71)(40,51) -- (40,71)(60,51) -- (60,71)(80,51) -- (80,71)(100,51) -- (100,71)(120,51) -- (120,71)(140,51) -- (140,71)(160,51) -- (160,71)(180,51) -- (180,71)(200,51) -- (200,71)(220,51) -- (220,71)(240,51) -- (240,71)(260,51) -- (260,71) ; \draw    ; \draw   (0,51) -- (280,51) -- (280,71) -- (0,71) -- cycle ;
%Straight Lines [id:da3787016325489112] 
\draw    (141,78) -- (141,95) ;
\draw [shift={(141,97)}, rotate = 270] [color={rgb, 255:red, 0; green, 0; blue, 0 }  ][line width=0.75]    (10.93,-3.29) .. controls (6.95,-1.4) and (3.31,-0.3) .. (0,0) .. controls (3.31,0.3) and (6.95,1.4) .. (10.93,3.29)   ;
%Shape: Grid [id:dp08733242357810123] 
\draw  [draw opacity=0] (0,102) -- (280,102) -- (280,122) -- (0,122) -- cycle ; \draw   (20,102) -- (20,122)(40,102) -- (40,122)(60,102) -- (60,122)(80,102) -- (80,122)(100,102) -- (100,122)(120,102) -- (120,122)(140,102) -- (140,122)(160,102) -- (160,122)(180,102) -- (180,122)(200,102) -- (200,122)(220,102) -- (220,122)(240,102) -- (240,122)(260,102) -- (260,122) ; \draw    ; \draw   (0,102) -- (280,102) -- (280,122) -- (0,122) -- cycle ;

% Text Node
\draw (5,3) node [anchor=north west][inner sep=0.75pt]   [align=left] {\textbf{8}};
% Text Node
\draw (25,3) node [anchor=north west][inner sep=0.75pt]   [align=left] {6};
% Text Node
\draw (37,3) node [anchor=north west][inner sep=0.75pt]   [align=left] {\textit{\textbf{10}}};
% Text Node
\draw (65,3) node [anchor=north west][inner sep=0.75pt]   [align=left] {5};
% Text Node
\draw (85,3) node [anchor=north west][inner sep=0.75pt]   [align=left] {6};
% Text Node
\draw (105,3) node [anchor=north west][inner sep=0.75pt]   [align=left] {\textbf{7}};
% Text Node
\draw (125,3) node [anchor=north west][inner sep=0.75pt]   [align=left] {\textbf{7}};
% Text Node
\draw (145,3) node [anchor=north west][inner sep=0.75pt]   [align=left] {\textbf{8}};
% Text Node
\draw (185,3) node [anchor=north west][inner sep=0.75pt]   [align=left] {\textbf{7}};
% Text Node
\draw (205,3) node [anchor=north west][inner sep=0.75pt]   [align=left] {\textbf{8}};
% Text Node
\draw (165,3) node [anchor=north west][inner sep=0.75pt]   [align=left] {4};
% Text Node
\draw (155,28) node [anchor=north west][inner sep=0.75pt]   [align=left] {\small{shuffle}};
\draw (155,79) node [anchor=north west][inner sep=0.75pt]   [align=left] {\small{reorder the entries 7 and 8}};
\draw (45,54) node [anchor=north west][inner sep=0.75pt]  [color={rgb, 255:red, 0; green, 0; blue, 0 }  ,opacity=1 ] [align=left] {\textbf{7}};
% Text Node
\draw (25,54) node [anchor=north west][inner sep=0.75pt]   [align=left] {6};
% Text Node
\draw (5,54) node [anchor=north west][inner sep=0.75pt]  [color={rgb, 255:red, 0; green, 0; blue, 0 }  ,opacity=1 ] [align=left] {\textbf{8}};
% Text Node
\draw (65,54) node [anchor=north west][inner sep=0.75pt]   [align=left] {5};
% Text Node
\draw (85,54) node [anchor=north west][inner sep=0.75pt]   [align=left] {6};
% Text Node
\draw (100-3,54) node [anchor=north west][inner sep=0.75pt]   [align=left] {\textbf{\textit{10}}};
% Text Node
\draw (125,54) node [anchor=north west][inner sep=0.75pt]   [align=left] {\textbf{7}};
% Text Node
\draw (145,54) node [anchor=north west][inner sep=0.75pt]   [align=left] {\textbf{8}};
% Text Node
\draw (182,54) node [anchor=north west][inner sep=0.75pt]   [align=left] {\textbf{\textit{9}}};
% Text Node
\draw (205,54) node [anchor=north west][inner sep=0.75pt]   [align=left] {\textbf{7}};
% Text Node
\draw (165,54) node [anchor=north west][inner sep=0.75pt]   [align=left] {4};
% Text Node
\draw (225-3,3) node [anchor=north west][inner sep=0.75pt]   [align=left] {\textbf{\textit{9}}};
% Text Node
\draw (245,3) node [anchor=north west][inner sep=0.75pt]   [align=left] {\textbf{8}};
% Text Node
\draw (260-3,3) node [anchor=north west][inner sep=0.75pt]   [align=left] {\textbf{\textit{10}}};
% Text Node
\draw (220-3,54) node [anchor=north west][inner sep=0.75pt]   [align=left] {\textbf{\textit{10}}};
% Text Node
\draw (265,54) node [anchor=north west][inner sep=0.75pt]   [align=left] {\textbf{8}};
% Text Node
\draw (45,105) node [anchor=north west][inner sep=0.75pt]  [color={rgb, 255:red, 0; green, 0; blue, 0 }  ,opacity=1 ] [align=left] {\textbf{8}};
% Text Node
\draw (25,105) node [anchor=north west][inner sep=0.75pt]   [align=left] {6};
% Text Node
\draw (5,105) node [anchor=north west][inner sep=0.75pt]  [color={rgb, 255:red, 0; green, 0; blue, 0 }  ,opacity=1 ] [align=left] {\textbf{7}};
% Text Node
\draw (65,105) node [anchor=north west][inner sep=0.75pt]   [align=left] {5};
% Text Node
\draw (85,105) node [anchor=north west][inner sep=0.75pt]   [align=left] {6};
% Text Node
\draw (100-3,105) node [anchor=north west][inner sep=0.75pt]   [align=left] {\textit{\textbf{10}}};
% Text Node
\draw (125,105) node [anchor=north west][inner sep=0.75pt]   [align=left] {\textbf{8}};
% Text Node
\draw (145,105) node [anchor=north west][inner sep=0.75pt]   [align=left] {\textbf{7}};
% Text Node
\draw (182,105) node [anchor=north west][inner sep=0.75pt]   [align=left] {\textbf{\textit{9}}};
% Text Node
\draw (205,105) node [anchor=north west][inner sep=0.75pt]   [align=left] {\textbf{8}};
% Text Node
\draw (165,105) node [anchor=north west][inner sep=0.75pt]   [align=left] {4};
% Text Node
\draw (265,105) node [anchor=north west][inner sep=0.75pt]   [align=left] {\textbf{8}};
% Text Node
\draw (217,105) node [anchor=north west][inner sep=0.75pt]   [align=left] {\textbf{\textit{10}}};
% Text Node
\draw (245,105) node [anchor=north west][inner sep=0.75pt]   [align=left] {\textbf{7}};
% Text Node
\draw (245,54) node [anchor=north west][inner sep=0.75pt]   [align=left] {\textbf{8}};

\end{tikzpicture}
\caption{An illustration from $(a_1,\ldots, a_d)$ to $(a_1',\ldots, a_d')$.}
\label{iunf-323}

\end{figure}

Define $\Phi(F)$ as the flagged filling obtained from $F^{(1)}$ by replacing the entry $a_t$  with $a_t'$ for $1\leq t\leq d$. 
Write $\Phi(F)=(F_1',\ldots, F_n')$. 
Suppose that there are $m''$  columns in the first $k-1$ regions. For $m''<j\leq n$, we clearly have $F_j'=F_j$. While, 
for $1\leq j\leq m''$, when $F_j'$ does not contain $q-1$ or $q$,  the set of entries in $F_j'$ is equal to $C_j'$. 
%Note that   when $F_j'$ contains $q-1$ or $q$ (which must lie in row $q$), the set of entries in $F_j'$ is not necessarily $C_j'$ because ***. This implies that we cannot directly define  $\hat{\Phi}(F)$ by ignoring all the columns of  $\Phi(F)$ in the first $k-1$ regions. 

The construct of $\hat{\Phi}(F)$ is as follows. 
First, ignore the ``well-behaved'' columns, namely, the columns in the first $k-1$ regions of $\Phi(F)$ not containing $q-1$ or $q$. Then, merge the remaining  columns (namely, columns containing  $q-1$ or $q$)  into the  $k$-th region, in such a way that we erase all crossings above row $q-1$. Specifically, for $1\leq j\leq m''$, if $F'_j$ contains $q-1$ or $q$, then we replace $D_j$ by  $D_j\cup [q-2]=[q-2]\cup \{q\}$, and correspondingly, we replace 
$C^{(1)}_j$ and $C'_j$ respectively by $C^{(1)}_j\cup [p-2]$ and 
$C'_j\cup [p-2]$. For the portion  in Figure \ref{okp-1}, the illustration that the columns containing $q-1$ or $q$ are merged into the $k$-th region is given in  Figure \ref{pkYT-0}.
\begin{figure}[h]
    \centering

  \tikzset{every picture/.style={line width=0.6pt}} %set default line width to 0.6pt        
\begin{tikzpicture}[x=0.6pt,y=0.6pt,yscale=-1,xscale=1]
%uncomment if require: \path (0,525); %set diagram left start at 0, and has height of 525

%Shape: Grid [id:dp1180441208797609] 
\draw  [draw opacity=0] (0,0) -- (140,0) -- (140,260) -- (0,260) -- cycle ; \draw   (20,0) -- (20,260)(40,0) -- (40,260)(60,0) -- (60,260)(80,0) -- (80,260)(100,0) -- (100,260)(120,0) -- (120,260) ; \draw   (0,20) -- (140,20)(0,40) -- (140,40)(0,60) -- (140,60)(0,80) -- (140,80)(0,100) -- (140,100)(0,120) -- (140,120)(0,140) -- (140,140)(0,160) -- (140,160)(0,180) -- (140,180)(0,200) -- (140,200)(0,220) -- (140,220)(0,240) -- (140,240) ; \draw   (0,0) -- (140,0) -- (140,260) -- (0,260) -- cycle ;
%Straight Lines [id:da9996996122761466] 
\draw    (0,159.5) -- (20,179.5) ;
%Straight Lines [id:da6278209601527516] 
\draw    (0,179.5) -- (20,159.5) ;

%Straight Lines [id:da13045463706268712] 
\draw    (0,199.5) -- (20,219.5) ;
%Straight Lines [id:da4021774894557044] 
\draw    (0,219.5) -- (20,199.5) ;

%Straight Lines [id:da04316490229802605] 
\draw    (0,219.5) -- (20,239.5) ;
%Straight Lines [id:da5227901332977054] 
\draw    (0,239.5) -- (20,219.5) ;

%Straight Lines [id:da3346562494886294] 
\draw    (0,239.5) -- (20,259.5) ;
%Straight Lines [id:da581271938640618] 
\draw    (0,259.5) -- (20,239.5) ;

%Straight Lines [id:da8941953672683622] 
\draw    (20,159.5) -- (40,179.5) ;
%Straight Lines [id:da008189439551247846] 
\draw    (20,179.5) -- (40,159.5) ;

%Straight Lines [id:da2812692569029336] 
\draw    (40,159.5) -- (60,179.5) ;
%Straight Lines [id:da04424136907921428] 
\draw    (40,179.5) -- (60,159.5) ;

%Straight Lines [id:da49551526943847923] 
\draw    (60,159.5) -- (80,179.5) ;
%Straight Lines [id:da336896184412786] 
\draw    (60,179.5) -- (80,159.5) ;

%Straight Lines [id:da8740705666103734] 
\draw    (20,199.5) -- (40,219.5) ;
%Straight Lines [id:da643527286842136] 
\draw    (20,219.5) -- (40,199.5) ;

%Straight Lines [id:da9255229397006499] 
\draw    (40,199.5) -- (60,219.5) ;
%Straight Lines [id:da8567287163889987] 
\draw    (40,219.5) -- (60,199.5) ;

%Straight Lines [id:da11585173998546638] 
\draw    (80,239.5) -- (100,259.5) ;
%Straight Lines [id:da9150676597585281] 
\draw    (80,259.5) -- (100,239.5) ;

%Straight Lines [id:da1842206220794147] 
\draw    (80,159.5) -- (100,179.5) ;
%Straight Lines [id:da5504305302793455] 
\draw    (80,179.5) -- (100,159.5) ;

%Straight Lines [id:da8082492143216542] 
\draw    (100,159.5) -- (120,179.5) ;
%Straight Lines [id:da5234658539616643] 
\draw    (100,179.5) -- (120,159.5) ;

%Straight Lines [id:da29999491340925943] 
\draw    (120,159.5) -- (140,179.5) ;
%Straight Lines [id:da7030338714760698] 
\draw    (120,179.5) -- (140,159.5) ;

%Straight Lines [id:da7355741136357197] 
\draw    (20,219.5) -- (40,239.5) ;
%Straight Lines [id:da19706889783798198] 
\draw    (20,239.5) -- (40,219.5) ;

%Straight Lines [id:da5319912770965378] 
\draw    (40,219.5) -- (60,239.5) ;
%Straight Lines [id:da12258591597225466] 
\draw    (40,239.5) -- (60,219.5) ;

%Straight Lines [id:da1480278880762671] 
\draw    (60,219.5) -- (80,239.5) ;
%Straight Lines [id:da8723759861361642] 
\draw    (60,239.5) -- (80,219.5) ;

%Straight Lines [id:da6026302646511077] 
\draw    (20,239.5) -- (40,259.5) ;
%Straight Lines [id:da6933656194970412] 
\draw    (20,259.5) -- (40,239.5) ;

%Straight Lines [id:da11714970404380942] 
\draw    (40,239.5) -- (60,259.5) ;
%Straight Lines [id:da25167930132629945] 
\draw    (40,259.5) -- (60,239.5) ;

%Straight Lines [id:da8957270018916255] 
\draw    (60,239.5) -- (80,259.5) ;
%Straight Lines [id:da3771802863390976] 
\draw    (60,259.5) -- (80,239.5) ;

%Straight Lines [id:da44788230635849335] 
\draw    (100,239.5) -- (120,259.5) ;
%Straight Lines [id:da2671303467497048] 
\draw    (100,259.5) -- (120,239.5) ;

%Straight Lines [id:da7515382544109375] 
\draw    (120,239.5) -- (140,259.5) ;
%Straight Lines [id:da22093888225970293] 
\draw    (120,259.5) -- (140,239.5) ;

%Shape: Rectangle [id:dp1918976236769525] 
\draw  [line width=2.25]  (0,0) -- (60,0) -- (60,160) -- (0,160) -- cycle ;

% Text Node
\draw (-2+64,2.5) node [anchor=north west][inner sep=0.6pt]   [align=left] {1};
% Text Node
\draw (-2+84,2.5) node [anchor=north west][inner sep=0.6pt]   [align=left] {1};
% Text Node
\draw (-2+104,2.5) node [anchor=north west][inner sep=0.6pt]   [align=left] {1};
% Text Node
\draw (-2+124,2.5) node [anchor=north west][inner sep=0.6pt]   [align=left] {1};
% Text Node
\draw (-2+65,22.5) node [anchor=north west][inner sep=0.6pt]   [align=left] {2};
% Text Node
\draw (-2+85,22.5) node [anchor=north west][inner sep=0.6pt]   [align=left] {2};
% Text Node
\draw (-2+105,22.5) node [anchor=north west][inner sep=0.6pt]   [align=left] {2};
% Text Node
\draw (-2+125,22.5) node [anchor=north west][inner sep=0.6pt]   [align=left] {2};
% Text Node
\draw (-2+125,42.5) node [anchor=north west][inner sep=0.6pt]   [align=left] {3};
% Text Node
\draw (-2+125,62.5) node [anchor=north west][inner sep=0.6pt]   [align=left] {4};
% Text Node
\draw (-2+125,82.5) node [anchor=north west][inner sep=0.6pt]   [align=left] {5};
% Text Node
\draw (-2+125,102.5) node [anchor=north west][inner sep=0.6pt]   [align=left] {6};
% Text Node
\draw (-2+125,121.5) node [anchor=north west][inner sep=0.6pt]   [align=left] {7};
% Text Node
\draw (-2+125,141.5) node [anchor=north west][inner sep=0.6pt]   [align=left] {8};
% Text Node
\draw (-2+105,42.5) node [anchor=north west][inner sep=0.6pt]   [align=left] {3};
% Text Node
\draw (-2+105,62.5) node [anchor=north west][inner sep=0.6pt]   [align=left] {4};
% Text Node
\draw (-2+105,82.5) node [anchor=north west][inner sep=0.6pt]   [align=left] {5};
% Text Node
\draw (-2+105,102.5) node [anchor=north west][inner sep=0.6pt]   [align=left] {6};
% Text Node
\draw (-2+105,121.5) node [anchor=north west][inner sep=0.6pt]   [align=left] {7};
% Text Node
\draw (-2+105,141.5) node [anchor=north west][inner sep=0.6pt]   [align=left] {8};
% Text Node
\draw (-2+85,42.5) node [anchor=north west][inner sep=0.6pt]   [align=left] {3};
% Text Node
\draw (-2+85,62.5) node [anchor=north west][inner sep=0.6pt]   [align=left] {4};
% Text Node
\draw (-2+85,82.5) node [anchor=north west][inner sep=0.6pt]   [align=left] {5};
% Text Node
\draw (-2+85,102.5) node [anchor=north west][inner sep=0.6pt]   [align=left] {6};
% Text Node
\draw (-2+85,121.5) node [anchor=north west][inner sep=0.6pt]   [align=left] {7};
% Text Node
\draw (-2+85,141.5) node [anchor=north west][inner sep=0.6pt]   [align=left] {8};
% Text Node
\draw (-2+65,42.5) node [anchor=north west][inner sep=0.6pt]   [align=left] {3};
% Text Node
\draw (-2+65,62.5) node [anchor=north west][inner sep=0.6pt]   [align=left] {4};
% Text Node
\draw (-2+65,82.5) node [anchor=north west][inner sep=0.6pt]   [align=left] {5};
% Text Node
\draw (-2+65,102.5) node [anchor=north west][inner sep=0.6pt]   [align=left] {6};
% Text Node
\draw (-2+65,121.5) node [anchor=north west][inner sep=0.6pt]   [align=left] {7};
% Text Node
\draw (-2+65,141.5) node [anchor=north west][inner sep=0.6pt]   [align=left] {8};
% Text Node
\draw (-2+61,182.5) node [anchor=north west][inner sep=0.6pt]  [color={rgb, 255:red, 0; green, 0; blue, 0 }  ,opacity=1 ] [align=left] {10};
% Text Node
\draw (-2+41,182.5) node [anchor=north west][inner sep=0.6pt]  [color={rgb, 255:red, 0; green, 0; blue, 0 }  ,opacity=1 ] [align=left] {10};
% Text Node
\draw (-2+25,182.5) node [anchor=north west][inner sep=0.6pt]  [color={rgb, 255:red, 0; green, 0; blue, 0 }  ,opacity=1 ] [align=left] {9};
% Text Node
\draw (-2+81,182.5) node [anchor=north west][inner sep=0.6pt]  [color={rgb, 255:red, 0; green, 0; blue, 0 }  ,opacity=1 ] [align=left] {10};
% Text Node
\draw (-2+105,182.5) node [anchor=north west][inner sep=0.6pt]  [color={rgb, 255:red, 0; green, 0; blue, 0 }  ,opacity=1 ] [align=left] {9};
% Text Node
\draw (-2+61,202.5) node [anchor=north west][inner sep=0.6pt]  [color={rgb, 255:red, 0; green, 0; blue, 0 }  ,opacity=1 ] [align=left] {11};
% Text Node
\draw (-2+81,202.5) node [anchor=north west][inner sep=0.6pt]  [color={rgb, 255:red, 0; green, 0; blue, 0 }  ,opacity=1 ] [align=left] {11};
% Text Node
\draw (-2+101,202.5) node [anchor=north west][inner sep=0.6pt]  [color={rgb, 255:red, 0; green, 0; blue, 0 }  ,opacity=1 ] [align=left] {11};
% Text Node
\draw (-2+81,222.5) node [anchor=north west][inner sep=0.6pt]  [color={rgb, 255:red, 0; green, 0; blue, 0 }  ,opacity=1 ] [align=left] {12};
% Text Node
\draw (-2+101,222.5) node [anchor=north west][inner sep=0.6pt]  [color={rgb, 255:red, 0; green, 0; blue, 0 }  ,opacity=1 ] [align=left] {10};
% Text Node
\draw (-2+121,182.5) node [anchor=north west][inner sep=0.6pt]  [color={rgb, 255:red, 0; green, 0; blue, 0 }  ,opacity=1 ] [align=left] {10};
% Text Node
\draw (-2+125,202.5) node [anchor=north west][inner sep=0.6pt]  [color={rgb, 255:red, 0; green, 0; blue, 0 }  ,opacity=1 ] [align=left] {9};
% Text Node
\draw (-2+121,222.5) node [anchor=north west][inner sep=0.6pt]  [color={rgb, 255:red, 0; green, 0; blue, 0 }  ,opacity=1 ] [align=left] {12};
% Text Node
\draw (-2+4,2.5) node [anchor=north west][inner sep=0.6pt]   [align=left] {1};
% Text Node
\draw (-2+24,2.5) node [anchor=north west][inner sep=0.6pt]   [align=left] {1};
% Text Node
\draw (-2+44,2.5) node [anchor=north west][inner sep=0.6pt]   [align=left] {1};
% Text Node
\draw (-2+5,22.5) node [anchor=north west][inner sep=0.6pt]   [align=left] {2};
% Text Node
\draw (-2+25,22.5) node [anchor=north west][inner sep=0.6pt]   [align=left] {2};
% Text Node
\draw (-2+45,22.5) node [anchor=north west][inner sep=0.6pt]   [align=left] {2};
% Text Node
\draw (-2+45,42.5) node [anchor=north west][inner sep=0.6pt]   [align=left] {3};
% Text Node
\draw (-2+45,62.5) node [anchor=north west][inner sep=0.6pt]   [align=left] {4};
% Text Node
\draw (-2+45,82.5) node [anchor=north west][inner sep=0.6pt]   [align=left] {5};
% Text Node
\draw (-2+45,102.5) node [anchor=north west][inner sep=0.6pt]   [align=left] {6};
% Text Node
\draw (-2+45,121.5) node [anchor=north west][inner sep=0.6pt]   [align=left] {7};
% Text Node
\draw (-2+45,141.5) node [anchor=north west][inner sep=0.6pt]   [align=left] {8};
% Text Node
\draw (-2+25,42.5) node [anchor=north west][inner sep=0.6pt]   [align=left] {3};
% Text Node
\draw (-2+25,62.5) node [anchor=north west][inner sep=0.6pt]   [align=left] {4};
% Text Node
\draw (-2+25,82.5) node [anchor=north west][inner sep=0.6pt]   [align=left] {5};
% Text Node
\draw (-2+25,102.5) node [anchor=north west][inner sep=0.6pt]   [align=left] {6};
% Text Node
\draw (-2+25,121.5) node [anchor=north west][inner sep=0.6pt]   [align=left] {7};
% Text Node
\draw (-2+25,141.5) node [anchor=north west][inner sep=0.6pt]   [align=left] {8};
% Text Node
\draw (-2+5,42.5) node [anchor=north west][inner sep=0.6pt]   [align=left] {3};
% Text Node
\draw (-2+5,62.5) node [anchor=north west][inner sep=0.6pt]   [align=left] {4};
% Text Node
\draw (-2+5,82.5) node [anchor=north west][inner sep=0.6pt]   [align=left] {5};
% Text Node
\draw (-2+5,102.5) node [anchor=north west][inner sep=0.6pt]   [align=left] {6};
% Text Node
\draw (-2+5,121.5) node [anchor=north west][inner sep=0.6pt]   [align=left] {7};
% Text Node
\draw (-2+5,141.5) node [anchor=north west][inner sep=0.6pt]   [align=left] {8};
% Text Node
\draw (-2+1,182.5) node [anchor=north west][inner sep=0.6pt]  [color={rgb, 255:red, 0; green, 0; blue, 0 }  ,opacity=1 ] [align=left] {10};

\end{tikzpicture}
\caption{The merging procedure for Figure \ref{okp-1}.}
\label{pkYT-0}
\end{figure}

We remark that all the entries above row $q-1$ in the merged columns, which are framed by lines in blodface in Figure \ref{pkYT-0},  will keep unchanged in the next round of the iteration.  So the merging operation merely plays a role  that the columns in the first $k-1$ regions, containing  $q-1$ or $q$, are viewed as columns in the $k$-th region, so that our algorithms could be implemented in the next iteration.

\subsection{The first region of $D$ is of Type (R 3)}\label{subsection-4.3}

In this case, the construction of $\Phi(F)$ as well as $\hat{\Phi}(F)$  is nearly the same as that for the Type (R 2) case in Subsection \ref{sub4.2-1}. So the description  will be sketched. 
All but one of the notation (namely, the notation  $p$)  will be fully consistent with what we used in Subsection \ref{sub4.2-1}. 

Let $(D_1,\ldots, D_m)$ be the first region of $D$. 
Keep in mind that $D_j$ for $1\leq j<m$ are Type I columns, and $D_m$ is a Type III column.
For $1\leq j\leq m$, let $n_j$ be the row index such that $(n_j, j)$ is the box right above  the second crossing in column $j$.
 Note that $[n_j]\setminus C_j$ (respectively, $[n_j]\setminus C_j'$) contains a single element, or is equal to $\emptyset$ (this possibly occurs only when $j=m$), which is defined as the label $\ell_j$ (respectively, $\ell_j'$).

Perform the first algorithm as in Subsection \ref{sub4.2-1} to interchange the columns of $F$ in the first region.  
The resulting flagged filling is denoted $F^{(1)}=(F^{(1)}_1,\ldots, F^{(1)}_n)$. Suppose that $F^{(1)}$ belongs  to $\mathcal{F}_{D}(C^{(1)})$ where $C^{(1)}=(C^{(1)}_1,\ldots, C^{(1)}_n)$.  Then  
$C^{(1)}_j=C_j'$ for  $1\leq j<m$.
Moreover, one has $F^{(1)}_j=F_j$ for  $m< j\leq n$. 
We next deal with the column $F^{(1)}_m$, parallel to what we do in the second algorithm  in Subsection \ref{sub4.2-1}.

Since $D_m$ is a Type III column,  there are at least two boxes of $D_m$ lying below the second crossing. Let $p<q$ be the row indices  such that   $(p,m)$ and $(q,m)$ are the lowest two boxes of $D_m$. Unlike in Subsection \ref{sub4.2-1}, we will no longer have the relation $p=n_m$.   Suppose that there are   $k$ regions whose first crossings lie in or above row $q-1$. 

For $1\leq r\leq k$, assume that the first crossing in the $r$-th region  is in row $i_r$.
We consider two cases.

Case 1. $i_k<q-1$. The left picture in Figure \ref{PTED-1221} is an instance of this case.  Define  $\Phi(F)$  and $\hat{\Phi}(F)$ by applying the same procedure as in Case 1 of Subsection \ref{sub4.2-1}.
\begin{figure}[h]
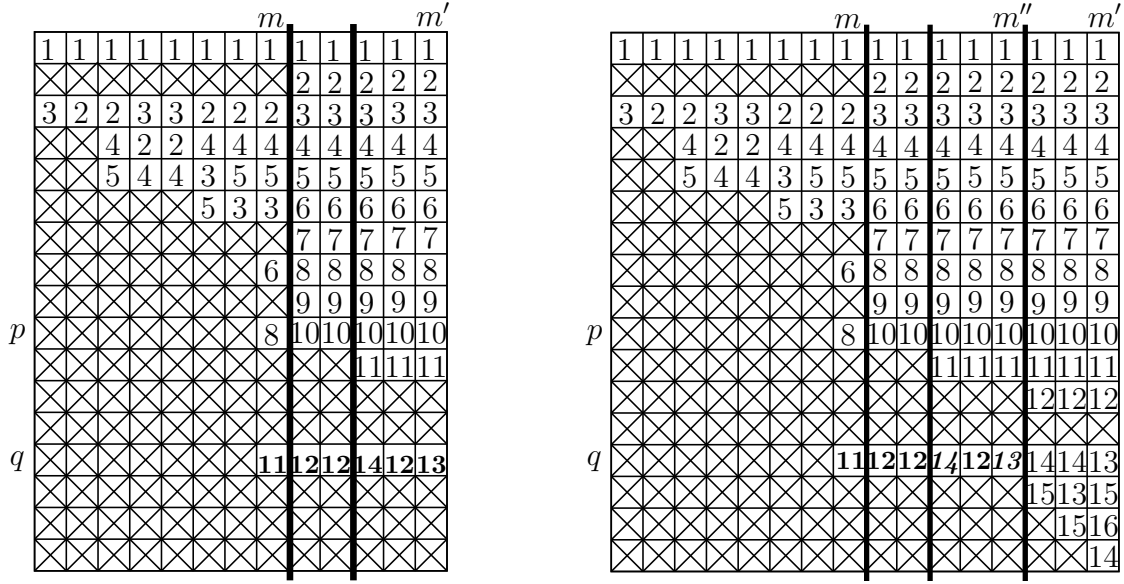

\subfigure{
\begin{minipage}[t]{7cm}
 
    \centering
    \tikzset{every picture/.style={line width=0.6pt}} %set default line width to 0.6pt  
% [inline block 6: 2 envs, 64160 chars -> data_tex | \begin{tikzpicture} [x=0.6pt,y=0.6pt,yscale=-1,xscale=1]...]

%\caption{All illustration for Type (R 3) case with $i_k=q-1$.}

\end{minipage}

}
\caption{Illustrations for Type (R 3) case with $i_k<q-1$ or $i_k=q-1$.}\label{PTED-1221}
\end{figure}

Case 2. $i_k=q-1$. This case is demonstrated in the right picture in  Figure \ref{PTED-1221}.
Define  $\Phi(F)$  and $\hat{\Phi}(F)$ by applying  the same procedure as in Case 2 of Subsection \ref{sub4.2-1}.

\subsection{Proof of Theorem \ref{main-bijection}}

Starting with $F\in \mathcal{F}_D(C)$, we iterate the operation $\Phi$ and  eventually arrive at a flagged filling, denoted $\Omega(F)$, in  $\mathcal{F}_D(C')$. We explain  that $\Omega$ is  a bijection from $\mathcal{F}_D(C)$ to $\mathcal{F}_D(C')$
that preserves both the sign and weight.
To check that $\Omega$ is a bijection, the key is to make clear  which and how   columns are swapped in each step.
By our construction, this only depends on the diagrams $D, C$ and $ C'$, independent of the flagged filling $F$. Moreover, it is easy to see that  the operation in each step in Subsections \ref{subset4.1}, \ref{sub4.2-1}, \ref{subsection-4.3} may be reversed. 

We next check that $\Omega$ preserves the sign and weight of $F$. It suffices to verify that $F$ and $\Phi(F)$ have the same sign and weight. Since the entries (if moved) are slid  in the same row, we have $y^{F}=y^{\Phi(F)}$. 
To see that $F$ and $\Phi(F)$ have the same sign, recall that there are two kinds of operations in our construction.
\begin{itemize}
\item[(1)] Two columns $F_{j_1}$ and $F_{j_2}$ ($j_1<j_2$) of $F$ are exchanged. Suppose that $F_{j_1}$ has column  reading word  $u=a_1 a_2\cdots a_s$, and $F_{j_1}$ has column  reading word $v=b_1 b_2\cdots b_t$. Note that $s\leq t$.
After column-exchanging, the reading words in columns $j_1$ and $j_2$ become $u'=b_1 b_2\cdots b_s$ and $v'=a_1 a_2\cdots a_s \,b_{s+1}\cdots b_t$, respectively. Notice that any element in $\{b_{s+1}, \ldots, b_t\}$ is bigger than any element in $\{a_1,\ldots, a_s\}$ or   $\{b_1,\ldots, b_s\}$.
This implies that $\mathrm{inv}(u)+\mathrm{inv}(v)=\mathrm{inv}(u')+\mathrm{inv}(v')$.

\item[(2)] Some entries in row $q$ are reordered. In this case, the inversion  number of each column reading word is unchanged since the moved entries in row $q$ are bigger than any entry above row $q$ in the corresponding columns.
\end{itemize}

By the  above analysis, we obtain that $\Omega$ is a sign- and weight-preserving bijection. This allows us to conclude  the proof of \ref{main-bijection}.

\bigbreak

\footnotesize{

\textsc{(Peter L. Guo) Center for Combinatorics, Nankai University, LPMC, Tianjin 300071, P.R. China}

{\it
Email address: \tt lguo@nankai.edu.cn}

\medbreak

\textsc{(Zhuowei Lin) Center for Combinatorics, Nankai University, LPMC, Tianjin 300071, P.R. China}

{\it
Email address: \tt zwlin0825@163.com}

\medbreak

\textsc{(Simon C.Y. Peng) Center for Applied Mathematics, Tianjin University, Tianjin 300072, P.R. China}

{\it
Email address: \tt pcy@tju.edu.cn}

}
\end{document}